\newtheorem{theorem}{Theorem}[section]
\newtheorem{corollary}[theorem]{Corollary}
\newtheorem{lemma}[theorem]{Lemma}
\newtheorem{conjecture}[theorem]{Conjecture}
\newtheorem{remark}[theorem]{Remark}
\newtheorem{proposition}[theorem]{Proposition}
\newtheorem{definition}[theorem]{Definition}
\numberwithin{equation}{section}
\newcommand{\Hom}{\operatorname{Hom}}
\newcommand{\Ker}{\operatorname{Ker}}
\newcommand{\cl}{\operatorname{cl}}
\def\N{\mathbb{N}}
\def\Z{\mathbb{Z}}
\def\PP{\mathbb{P}}
\def\Q{\mathbb{Q}}
\def\C{\mathbb{C}}
\def\G{\mathbb{G}}
\def\H{\mathbb{H}}
\newcommand{\Qb}{{\overline {\mathbb Q}}}
\newcommand{\Gm}{{{\mathbb G}_m}}
\newcommand{\Ga}{{{\mathbb G}_a}}
\newcommand{\cf}{{\em cf. }}
\newcommand{\cO}{{\mathcal O}}
\newcommand{\CdRB}{{\mathcal C}_{\rm dRB}}
\newcommand{\CdRBQ}{{\mathcal C}_{\rm dRB, \Q}}
\newcommand{\Lie}{{\rm Lie\,}}
\newcommand{\pr}{{\rm pr}}
\newcommand{\Alb}{{\rm Alb}}
\newcommand{\alb}{{\rm alb}}
\newcommand{\Gr}{{\rm Gr}}
\newcommand{\Pic}{{\rm Pic}}
\newcommand{\Res}{{\rm Res}}
\newcommand{\rB}{{\rm B}}
\newcommand{\oli}{\overline}
\newcommand{\im}{{\rm im\,}}
\newcommand{\Biext}{{\rm Biext}}
\newcommand{\SymBiext}{{\rm SymBiext}}
\newcommand{\an}{{\rm an}}
\newcommand{\dR}{{\rm dR}}
\newcommand{\dRB}{{\rm dRB}}
\newcommand{\Tr}{{\rm Tr}}
\newcommand{\LiePer}{{\rm LiePer}}
\newcommand{\Per}{{\rm Per}\,}
\newcommand{\HdR}{H_{\rm dR}}
\newcommand{\ra}{\rightarrow}
\newcommand{\lrasim}{\stackrel{\sim}{\longrightarrow}}
\newcommand{\lra}{\longrightarrow}
\newcommand{\hra}{\hookrightarrow}
\newcommand{\hlra}{{\lhook\joinrel\longrightarrow}}
\newcommand{\ggp}{\rm{gp}}
\date{\today}
\title[Concerning the Grothendieck Period Conjecture]{Some remarks concerning \\ the Grothendieck Period Conjecture} 
\author{Jean-Beno\^{\i}t Bost}
\address{Jean-Beno\^{\i}t  Bost, D{é}partement de Math{é}matique, Universit{é}
Paris-Sud,
B{â}timent 425, 91405 Orsay cedex, France}
\email{jean-benoit.bost@math.u-psud.fr}
\author{Fran\c cois Charles}
\address{Fran\c cois Charles, Laboratoire de mathématiques d'Orsay, UMR 8628 du CNRS, Universit{é}
Paris-Sud,
B{â}timent 425, 91405 Orsay cedex, France}
\email{francois.charles@math.u-psud.fr}
\begin{document}
\begin{abstract}
We discuss various results and questions around the Grothendieck period conjecture, which is a counterpart, concerning the de Rham-Betti realization of algebraic varieties over number fields,  of the classical conjectures of Hodge and Tate. These results give new evidence towards the conjectures of Grothendieck and Kontsevich-Zagier concerning transcendence properties of the torsors of periods of varieties over number fields.

Let $\Qb$ be the algebraic closure of $\Q$ in $\C$, let $X$ be a smooth projective variety over $\Qb$ and let $X^\an_\C$ denote the compact complex analytic manifold that it defines. The Grothen\-dieck Period Conjecture in codimension $k$ on $X$, denoted $GPC^k(X)$, asserts that any class $\alpha$ in the algebraic de Rham cohomology group $H^{2k}_{\dR}(X/\Qb)$ of $X$ over $\Qb$ such that
$$\frac{1}{(2\pi i)^k} \int_\gamma \alpha \in \Q$$
for every rational homology class $\gamma$ in $H_{2k}(X^\an_\C, \Q)$ is the class in algebraic de Rham cohomology of some algebraic cycle of codimension $k$  in $X$, with rational coefficients.  

We notably establish that $GPC^1(X)$ holds when $X$ is a product of curves, of abelian varieties, and of $K3$ surfaces, and that $GPC^2(X)$ holds for a smooth cubic hypersurface $X$ in $\PP^5_\Qb$. We also discuss the conjectural relationship of Grothendieck classes with the weight filtration on cohomology.
\end{abstract}

%
%
\maketitle

\setcounter{tocdepth}{2}
{
\tableofcontents
}

In this article, $\Qb$ denotes the algebraic closure of $\Q$ in $\C$.

Let $X$ be a smooth projective variety over $\Qb$ and let $X^\an_\C$ denote the compact complex analytic manifold defined by the set of complex points of the smooth projective complex variety $X_\C.$
 If a cohomology class $\beta$ in $H^{2k}(X^\an_\C, \Q)$ is algebraic --- in other words, if $\beta$ is the class of some algebraic cycle of codimension $k$ in $X_\C$, or equivalently in $X$, with rational coefficients --- then the class $(2\pi i)^k \beta$ in $H^{2k}(X^\an_\C, \C)$ belongs to the $\Qb$-vector subspace $H^{2k}_{\dR}(X/\Qb)$ of $H^{2k}(X^\an_\C, \C)$ defined by the algebraic de Rham cohomology of $X$ over $\Qb$. 
 
 The \emph{Grothendieck Period Conjecture} $GPC^k(X)$  claims that, conversely, any cohomology class $\beta$  in  $H^{2k}(X^\an_\C, \Q)$  such that $(2\pi i)^k \beta$ belongs to $H^{2k}_{\dR}(X/\Qb)$ is algebraic.

 This work is mainly devoted to the codimension 1 case of this conjecture. We investigate this case by combining transcendence results  on commutative algebraic groups derived from the transcendence theorems of Schneider and Lang and diverse geometric constructions inspired by the ``philosophy of motives". Our transcendence arguments elaborate on the ones in \cite{Bost12}, and the motivic ones are variations on arguments classical in the study of absolute Hodge classes and of the conjectures of Hodge and Tate.
 
 Notably we establish the validity of $GPC^1(X)$ when $X$ is a product of curves, of abelian varieties, and of $K3$ surfaces (or more generally of smooth projective hyperk\"ahler varieties with second Betti number at least $4$)  over $\Qb$. This allows us to show that $GPC^2(X)$ holds for a smooth cubic hypersurface $X$ in $\PP^5_\Qb$.

\section{Introduction}

\subsection{The conjecture $GPC^k(X)$}\label{subGPC}

Let $X$ be a smooth projective variety\footnote{By a \emph{variety} over some field $k$, we mean a geometrically integral separated scheme of finite type over $k$.} over $\Qb.$

\subsubsection{De Rham and Betti cohomology groups}\label{dRBcomp} We refer the reader to \cite{Grothendieck66, Hartshorne75, CharlesSchnell11} for additional references and details on the basic facts recalled in this paragraph. 

To $X$ are attached its \emph{algebraic de Rham cohomology groups}, defined as the hypercohomology groups
$$\HdR^i(X/\Qb) := \H^i (X, \Omega^\bullet_{X/\Qb})$$ 
of the algebraic de Rham complex
$$\Omega^\bullet_{X/\Qb} : 0 \longrightarrow
\Omega^0_{X/\Qb} =\cO_{X} \stackrel{d}{\longrightarrow}
\Omega^1_{X/\Qb} \stackrel{d}{\longrightarrow}
\Omega^2_{X/\Qb} \stackrel{d}{\longrightarrow} \cdots  .
$$
We may also consider the compact connected complex analytic manifold $X^\an_\C$ defined by the smooth projective variety $X_\C$ over $\C$ deduced from $X$ by extending the base field from  $\Qb$ to $\C$,
 and its \emph{Betti cohomology groups}
 $H^i(X^\an_\C, \Q).$
 
 The base change $\Qb \hlra \C$ defines a canonical isomorphism
 \begin{equation}\label{dRQC}
\HdR^i(X/\Qb)\otimes_\Qb \C \lrasim \HdR^i(X_\C/\C):= \H^i (X_\C, \Omega^\bullet_{X_\C/\C}),
\end{equation}
and the GAGA comparison theorem shows that ``analytification" defines an isomorphism
\begin{equation}\label{GAGA}
 \H^i (X_\C, \Omega^\bullet_{X_\C/\C}) \lrasim \H^i (X^\an_\C, \Omega^\bullet_{X^\an_\C}),
\end{equation}
where $\Omega^\bullet_{X^\an_\C}$ denotes the analytic de Rham complex:
$$\Omega^\bullet_{X^\an_\C} : 0 \longrightarrow
\cO_{X^\an_\C} \stackrel{d}{\longrightarrow}
\Omega^1_{X^\an_\C} \stackrel{d}{\longrightarrow}
\Omega^2_{X^\an_\C} \stackrel{d}{\longrightarrow} \cdots  .
$$
Finally the analytic Poincar\'e lemma shows that the injective morphism of sheaves  $\C_{X^\an_{C}} \hlra \cO_{X^\an_{\C}}$ on $X^\an_\C$ defines a quasi-isomorphism of complexes of abelian sheaves
$\C_{X^\an_\C} \stackrel{q.i.}{\lra} \Omega^\bullet_{X^\an_\C},$
and consequently an isomorphism of (hyper)cohomology groups:
\begin{equation}\label{dRan}
H^i(X^\an_\C, \C) \lrasim \H^i (X^\an_\C, \Omega^\bullet_{X^\an_\C}).
\end{equation}
The composition of (\ref{dRQC}), (\ref{GAGA}), and of the inverse of (\ref{dRan}) defines a natural comparison isomorphism:
\begin{equation}\label{compdRC}
\HdR^i(X/\Qb)\otimes_\Qb \C \lrasim H^i (X^\an_\C,  \C).
\end{equation}

Besides, the extension of fields of coefficients $\Q \,\hlra\, \C$ defines a natural isomorphism:
\begin{equation}\label{compBC}
H^i (X^\an_\C, \Q)\otimes_\Q \C \lrasim H^i (X^\an_\C, \C).
\end{equation}

In this article, the isomorphisms (\ref{compdRC}) and (\ref{compBC}) will in general be written as equalities. For instance, for any element $\alpha$ in $\HdR^i(X/\Qb)$ (resp. $\beta$ in $H^i (X^\an_\C, \Q)$), its image by the inclusion $\HdR^i (X/\Qb) \hlra H^i (X^\an_\C, \C)$ (resp.  
$H^i (X^\an_\C, \Q)\hlra H^i (X^\an_\C, \C$)) determined by  (\ref{compdRC}) (resp. (\ref{compBC})) will be denoted $\alpha \otimes_\Qb 1_\C$ (resp. $\beta \otimes_\Q 1_\C$), or even $\alpha$ (resp. $\beta$) when no confusion may arise. 

\subsubsection{Cycle maps} Recall that there is a canonical way of associating a class $\cl^X_\dR(Z)$ in $H^{2k}_{\dR}(X/\Qb)$ with any element $Z$ of the group $Z^k(X)$ of algebraic cycles on $X$ of pure codimension $k$ (see for instance \cite{Hartshorne75}, II.7, and \cite{DeligneMilneOgusShih}, I.1). This construction defines cycle maps
$$\cl^X_{\dR}:  Z^k(X) \lra H^{2k}_{\dR}(X/\Qb).$$
These maps are compatible with algebraic equivalence and intersection products. They are functorial and compatible with Gysin maps.

When $k=1$, the cycle $Z$ is a divisor on $X$ and $\cl^X_\dR(Z)$ may  be defined as the image of the class of $\cO_X(Z)$ in $\Pic(X) \simeq
H^1(X, \cO_C^\times)$ by the map 
$$c_{1,\dR}:  H^1(X, \cO_X^\times) \lra H^2_\dR(X/\Qb)$$
induced in (hyper)cohomology by the morphism of (complex of) sheaves
$$
\begin{array}{rrclcc}
d\log: & \cO_X^\times  & \lra  & \Omega_{X/\Qb}^{1, d=0}& \hlra & \Omega^\bullet_{X/\Qb} [1]   \\
 & f & \longmapsto   & f^{-1}. df. &  & \end{array}
$$
Starting from $c_{1,\dR}$, one may define Chern classes $c_{k,\dR}$ of vector bundles, and  consequently of coherent $\cO_X$-modules, over $X$. Then the class of any closed integral subscheme $Z$ of codimension $k$ in $X$ is given by
$$\cl^X_\dR(Z) := \frac{(-1)^{k-1}}{(k-1)!} c_{k,\dR}(\cO_Z).$$

Similarly, using Chern classes in Betti cohomology, one defines ``topological" cycles maps
$$\cl^X_{\rB}:  Z^k(X_\C) \lra H^{2k}(X^{\an}_\C,\Q).$$
We refer the reader to \cite{Voisin2002}, Chapter 11, for a discussion of alternative constructions of the cycle class $\cl^X_{\rB}(Z)$ attached to a cycle  $Z$ in $Z^k(X_\C)$, notably in terms of the integration current $\delta_Z$ on $X^\an_\C$.

Occasionally, when no confusion may arise, we shall simply denote $[Z]$ the cycle class of a cycle $Z$ in de Rham or in Betti cohomology.

Up to a twist by some power of $2\pi i$, the above two constructions of cycle classes are compatible: 

\begin{proposition}\label{compdRB} For any integer $k$ and any cycle $Z$ in $Z^k(X)$, the following equality holds in $H^{2k}(X^\an_{\C}, \C)$:
\begin{equation}\label{competa}
\cl^X_{\dR}(Z)\otimes_{\Qb}1_{\C} = \epsilon_{k,d} (2\pi i)^k \cl^X_{\rB}(Z_{\C}) \otimes_{\Q}1_\C,
\end{equation}
where $\epsilon_{k,d}$ denotes  a sign\footnote{This sign is a function of $k$ and $d:= \dim X$ only, depending on the sign conventions used in the constructions of the cycle maps $\cl^X_{\dR}$ and $\cl^X_{\rB}$.}.
\end{proposition}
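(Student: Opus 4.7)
The plan is twofold: reduce to the case of first Chern classes of line bundles, and then prove that case by a direct computation on the analytic space using the exponential short exact sequence and the identity $d\log\circ\exp = d$.

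\textbf{Reduction to line bundles.} Both cycle maps are expressed through Chern classes: for $Z$ an integral subscheme of codimension $k$, the formula $\cl^X(Z) = \frac{(-1)^{k-1}}{(k-1)!}c_k(\cO_Z)$ is valid in either theory. The Chern classes $c_{\bullet,\dR}$ and $c_{\bullet,\rB}$ both satisfy functoriality, Whitney multiplicativity on short exact sequences, and the same formal normalization on $\cO_{\PP^n}(1)$. Hence, if we establish that there exists a sign $\eta\in\{\pm 1\}$ such that
$$c_{1,\dR}(\cL)\otimes_\Qb 1_\C \;=\; \eta\,(2\pi i)\, c_{1,\rB}(\cL_\C)\otimes_\Q 1_\C$$
in $H^2(X^\an_\C,\C)$ for every line bundle $\cL$ on $X$, then the splitting principle, applied simultaneously to both theories, together with resolutions of coherent sheaves by finite locally free ones on the smooth variety $X_\C$, yields the identity $c_{k,\dR}(\cF)\otimes_\Qb 1_\C = \eta^k(2\pi i)^k c_{k,\rB}(\cF_\C)\otimes_\Q 1_\C$ for every coherent sheaf $\cF$, and consequently (\ref{competa}).

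\textbf{The case of line bundles.} Pass to the analytic space $X^\an_\C$. After the isomorphisms (\ref{GAGA}) and (\ref{dRan}), the de Rham first Chern class of $\cL$ is the image of the class $[\cL]\in H^1(X^\an_\C,(\cO^\an_{X_\C})^\times)$ under the morphism of complexes of sheaves $d\log:(\cO^\an_{X_\C})^\times\to \Omega^\bullet_{X^\an_\C}[1]$. The Betti class is (up to a sign convention) the image of $[\cL]$ under the connecting morphism
$$\partial:(\cO^\an_{X_\C})^\times \longrightarrow (2\pi i\,\Z)[1]$$
associated with the exponential short exact sequence
$$0\longrightarrow 2\pi i\,\Z \longrightarrow \cO^\an_{X_\C} \stackrel{\exp}{\longrightarrow} (\cO^\an_{X_\C})^\times \longrightarrow 0,$$
composed with the identification $H^2(X^\an_\C, 2\pi i\,\Z) = (2\pi i)\cdot H^2(X^\an_\C,\Z)$ and division by $2\pi i$. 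The identity $d\log\circ\exp = d$ in $\Omega^1_{X^\an_\C}$, together with the Poincar\'e lemma quasi-isomorphism $\C_{X^\an_\C}\simeq \Omega^\bullet_{X^\an_\C}$, shows that the two morphisms $d\log$ and the composition $(\cO^\an_{X_\C})^\times \stackrel{\partial}{\to} (2\pi i\,\Z)[1] \hlra \Omega^\bullet_{X^\an_\C}[1]$ agree in the derived category of $X^\an_\C$ up to a universal sign. Taking $H^2$ and normalizing yields $c_{1,\dR}(\cL)\otimes_\Qb 1_\C = \pm\,(2\pi i)\, c_{1,\rB}(\cL_\C)\otimes_\Q 1_\C$, as required.

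\textbf{Main obstacle.} The technical difficulty is the bookkeeping of signs. The sign $\epsilon_{k,d}$ absorbs the signs appearing in the connecting homomorphisms, in the convention used for $d\log$, and in the factor $\frac{(-1)^{k-1}}{(k-1)!}$, as well as possible orientation conventions that may depend on $d=\dim X$ (for instance through Poincar\'e duality, when $\cl^X_\rB(Z_\C)$ is presented as the Poincar\'e dual of the fundamental homology class of $Z_\C$). None of these signs carries intrinsic content, which is why the proposition only records their combined existence as a function of $k$ and $d$.
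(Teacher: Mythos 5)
Your proposal is correct and takes essentially the same route as the paper: reduce to the $k=1$ case via the general formalism of Chern classes, then establish the first Chern class comparison. The paper simply cites Deligne (Th\'eorie de Hodge II, 2.2.5) for the $k=1$ step, and the content of that reference is precisely the exponential-sequence and $d\log\circ\exp=d$ argument you spell out.
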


For $k=1$, that is, for the first Chern class, this is a straightforward consequence of the definitions (see for instance  \cite{Deligne71}, 2.2.5). This special case implies the general one by the general formalism of Chern classes.

\subsubsection{The conjecture $GPC^k(X)$}\label{statement}

As indicated at the end of \ref{dRBcomp}, we shall write the canonical injections
$$H^i (X^\an_\C, \Q) \hlra H^i (X^\an_\C, \Q)\otimes_\Q \C \lrasim H^i (X^\an_\C, \C)$$
and 
$$\HdR^i(X/\Qb) \hlra
\HdR^i(X/\Qb)\otimes_\Qb \C \lrasim H^i (X^\an_\C,  \C)$$
as inclusions. For any integer $k$, we also consider the space
$$H^i (X^\an_\C, \Q(k)):= H^i (X^\an_\C, (2\pi i)^k\Q),$$
and we identify it to the subspace $(2 \pi i)^k H^i (X^\an_\C, \Q)$ of $H^i (X^\an_\C, \C)$.

According to these conventions, the relation (\ref{competa}) may be written
$$\cl^X_{\dR}(Z) = \epsilon_{k,d} (2\pi i)^k \cl^X_{\rB}(Z_{\C}),$$
and shows that the image of $\cl^X_\dR$ lies in the finite-dimensional $\Q$-vector space
$$H^{2k}_{\Gr}(X, \Q(k)) := H^{2k}_{\dR}(X/\Qb) \cap H^{2k}(X^\an_{\C}, \Q(k)).$$
These groups depend functorially on $X$: to any morphism $f: X \lra Y$ of smooth projective varieties over $\Qb$ one can attach a $\Q$-linear pull-back map
$$f^\ast_{\Gr}: H^{2k}_{\Gr}(Y, \Q(k)) \lra H^{2k}_{\Gr}(X,\Q(k)),$$
defined by the pull-back maps $f^\ast_\dR$ and $(f^{\an}_{\C,\rB})^\ast$ in algebraic de Rham and Betti cohomology.
  
The cycle class map $\cl^X_{\dR} =\epsilon_{k,d} (2\pi i)^k \cl^X_{\rB}$ from $Z^k(X)$ to $H^{2k}_{\Gr}(X, \Q(k))$ extends uniquely to a $\Q$-linear map
$$\cl^X_{\Gr}: Z^k(X)_{\Q} \lra H^{2k}_{\Gr}(X, \Q(k)),$$
and the \emph{Grothendieck Period Conjecture for cycles of codimension $k$ in $X$} is the assertion:
\begin{center}
$GPC^k(X)$: \emph{the morphism of $\Q$-vector spaces $\cl^X_{\Gr}: Z^k(X)_{\Q} \lra H^{2k}_{\Gr}(X, \Q(k))$ is onto.} 
\end{center}

This assertion characterizes --- conjecturally --- the cohomology
classes  with rational coefficients of algebraic cycles in $X$ by their joint rationality properties in the de Rham cohomology of $X/\Qb$ and in the Betti cohomology of $X^\an_{\C}.$

Observe that since Hilbert schemes of subschemes of $X$ are defined over $\Qb$,  $Z^k(X_\C)$ and its subgroup $Z^k(X)$ have the same image in $H^{2k}(X^\an_\C, \Z)$ by the cycle class map $\cl^X_{\rB}$ and that, according to Proposition \ref{compdRB}, the surjectivity of the cycle map
$$\cl^X_{\dR, \Qb}:  Z^k(X)_\Qb \lra H^{2k}_{\dR}(X/\Qb)$$
and the one of 
$$\cl^X_{\rB, \Q}:  Z^k(X_\C)_\Q \lra H^{2k}(X^{\an}_\C,\Q)$$
are equivalent. 
Therefore, when these cycle maps are surjective, $GPC^k(X)$ is true and  
$$H^{2k}_{\Gr}(X, \Q(k)) = H^{2k}(X^\an_{\C}, \Q(k)).$$

This discussion applies trivially when $k =0$ or $k= \dim X$ --- in particular $GPC^1(X)$ holds for any smooth projective curve $X$ over $\Qb$ --- and for any $k$ when $X$ is a cellular variety, for instance a Grassmannian (\cf \cite{FultonIT}, Examples 1.9.1 and 19.1.11). 

Also observe that, as a straightforward consequence of the hard Lefschetz theorem, if $X$ is a smooth projective variety over $\Qb$ of dimension $n$ and if $2k\leq n$, the following implication holds:
$$GPC^k(X)\implies GPC^{n-k}(X).$$

The Grothendieck Period  Conjecture is mentioned briefly in \cite{Grothendieck66} (note (10), p. 102) and with more details in \cite{Lang66b} (Historical Note of Chapter IV). It is presented by Andr\'e in his monographs \cite{Andre89} (IX.2.2) and \cite{AndreMotives04} (Section 7.5).  See Section \ref{Period} \emph{infra} for a  discussion of the relation between the original formulation of Grothendieck period conjecture and the conjectures $GPC^k(X)$ considered in this article.

\subsection{Summary of our results}\label{subRes} In \cite{Bost12}, Section 5, the conjecture $GPC^1(X)$ is discussed and is shown to hold when $X$ is an abelian variety over $\Qb$. In this article, we give some further evidence for the validity of $GPC^k(X),$ mainly when $k=1.$ This work may be seen as a sequel of \emph{loc. cit.},   inspired by the philosophy advocated by Andr\'e in \cite{AndreMotives04}, Chapter 7, where the Grothendieck period conjecture appears as a conjecture on realization functors on categories of motives, parallel to similar ``full faithfullness conjectures'', such as the Hodge conjecture or the Tate conjecture.

Several of our results, and to some extent their proofs, may be seen as translations, in the context of the Grothendieck period conjecture, of diverse classical results concerning the Tate conjecture, that are due to Tate himself (\cite{Tate66}), Jannsen  (\cite{Jannsen90}), Ramakhrishnan and Deligne (\cite{Tate94}, (5.2) and (5.6)) and Andr\'e (\cite{Andre96}). See also \cite{Zucker77} and\cite{Charles12a} for related arguments.

Here is a short summary of some of our results, presented in an order largely unrelated to the logical organization of their proofs : 

 {\bf 1.} \emph{Stability of $GPC^1(X)$ under products.} For any two smooth projective varieties $X$ and $Y$ over $\Qb$, $GPC^1(X \times Y)$ holds iff $GPC^1(X)$ and $GPC^1(Y)$ hold.

{\bf 2.} \emph{Reduction to surfaces.} Let $X$ be a smooth projective subvariety of $\PP^N_{\Qb}$ of dimension $\geq 3$. For any linear subspace $L$ of codimension $\dim X -2$ in $\PP^N_{\Qb}$ that is transverse\footnote{namely, such that $X$ and $L$ meet properly and their scheme theoretic intersection $X \cap L$ is smooth.} to $X$, the validity of $GPC^1(X\cap L)$ implies the validity  of $GPC^1(X).$

For any smooth projective $X$ as above,  such transverse linear subspaces $L$ do exist by the theorem of Bertini, and consequently the validity of $GPC^1(X)$ for arbitrary smooth projective varieties follows from its validity for smooth projective surfaces.

{\bf 3.} \emph{Extension to open varieties. Compatibility with rational maps.} The definition of the algebraic de Rham cohomology and the construction of the comparison isomorphism (\ref{compdRC}) may be extended to an arbitrary smooth variety $X$ over $\Qb$ (\cf \cite{Grothendieck66}). As a consequence, the Grothendieck period conjecture extends as well.   



For cycles of codimension $1$, this does not lead to an actual generalization of the Grothendieck period conjecture for smooth projective varieties. Indeed we shall prove that for any smooth projective variety $X$ over $\Qb$ and any non-empty open $U$ subscheme of $X$, \emph{$GPC^1(U)$ holds iff $GPC^1(X)$ holds.}

This immediately implies the birational invariance of $GPC^1(X)$.
More generally, we shall show that, for any two smooth projective varieties $X$ and $Y$ over $\Qb,$
\emph{if there exists a dominant rational map $f: X \dasharrow Y,$ then 
$GPC^1(X)$ implies 
$GPC^1(Y).$ }

{\bf 4.} $GPC^1(X)$ holds for $X$ \emph{an abelian variety \emph{or} a $K3$ surface}, or more generally, for a smooth projective \emph{hyperk\"ahler variety} with second Betti number at least $4$.

{\bf 5.} $GPC^2(X)$ holds for $X$ a \emph{smooth cubic hypersurface in $\PP^5_{\Qb}$. }

\subsection{Organization of this article} 

In Section 2, we discuss the original formulation of the Gro\-then\-dieck period conjecture, stated in terms of the torsor of periods of a smooth projective variety $X$ over $\Qb$ and of the algebraic cycles over its powers $X^n,$ and its relation with the conjectures $GPC^k(X^n).$ Our discussion may be seen as a complement of the one by Andr\'e in \cite{AndreMotives04}, 7.5.2 and 23.1, and incorporates some interesting observations by Ayoub and Gorchinsky.

In Section 3, we recall the transcendence theorems \emph{\`a la} Schneider--Lang on which the proofs of our results will rely: these theorems provide a description of morphisms of connected algebraic groups over $\Qb$ in terms of $\Qb$-linear maps between their Lie algebras that are compatible with their ``periods". From this basic result, we derive a description of biextensions by the multiplicative group $\Gm$ of abelian varieties over $\Qb$ in terms of their ``de Rham--Betti" homology groups. In turn, this implies the stability of $GPC^1$ under products, and its validity for abelian varieties. 

In substance, the derivation of the results of Section 3 involves arguments of the same nature as the ones used in the proof of $GPC^1$ for abelian varieties in \cite{Bost12}. However we believe that emphasizing the role of biextensions leads to results that are conceptually more satisfactory, and better suited to applications.

Section 4 is devoted to the natural generalization of the conjecture $GPC^k$ concerning \emph{quasi-projective} smooth varieties over $\Qb.$ In particular, we show that the validity of $GPC^1$ for such a variety and for a smooth projective compactification are equivalent.  Here again, our main tools are the transcendence theorems on algebraic groups recalled in Section 3.  
The results in this section actually establish, in small degree, the conjecture asserting that ``Grothendieck cohomology classes on smooth quasi-projective varieties over $\Qb$ live in weight zero."

Section 5 is devoted to  results on the Grothendieck period conjecture obtained by means of various constructions involving absolute Hodge cycles. In particular, we show that the general validity of $GPC^1$ would follow from the case of smooth projective surfaces. Besides, we use the classical results of Deligne in \cite{Deligne72} concerning the Kuga--Satake correspondence to derive the validity of $GPC^1$ for $K3$ surfaces and their higher dimensional generalizations starting from its validity for abelian varieties, already established in Section 3. 
Finally, we establish $GPC^2(X)$ for a smooth cubic hypersurface $X$ in $\PP^5_{\Qb}$, by using the construction of Beauville--Donagi in \cite{BeauvilleDonagi85}. 

\bigskip

We are grateful to Joseph Ayoub and  Serguey Gorchinsky for sharing their insight regarding the relationship between the Kontsevich--Zagier conjecture and full faithfulness conjectures for categories of motives. This article has also benefited from the careful reading and  suggestions of an anonymous referee, whom we warmly thank.

During the preparation of this paper, the first author has been partially supported by the project Positive of the Agence Nationale de la Recherche (grant ANR-2010-BLAN-0119-01) and by the Institut Universitaire de France. Most of this work has been completed while the second author was a member of IRMAR at the University of Rennes 1.


\section{The Grothendieck period conjecture and the torsor of periods}\label{Period}

In this section, we discuss the relationship between the Grothendieck period conjecture and the better-known conjectures of Grothendieck and Kontsevich--Zagier on periods. The content of this section is certainly familiar to specialists and appears in various forms in \cite{AndreMotives04, Ayoub12, HuberMullerStach11}. 

At the expense of concision, and in order to keep in line with the general tone of the paper, we will focus on giving concrete statements rather than using exclusively the language of Tannakian categories.

\subsection{The de Rham-Betti category and the torsor of periods}

In this section, we unwind standard definitions in the case of the Tannakian category of de Rham-Betti realizations, see for instance \cite{DeligneMilneOgusShih}, chapter II.

\subsubsection{The categories $\CdRBQ$ and $\CdRB$.} As in \cite{Bost12} 5.3 and 5.4,  we shall use the formalism of the category $\CdRB$ of ``de Rham-Betti realizations" \emph{\`a la} Deligne-Jannsen (\cf \cite{DeligneMilneOgusShih}, 2.6, \cite{Jannsen90} and \cite{AndreMotives04}, Section 7.5). In this paper, we will often work with rational coefficients and we introduce the corresponding category $\CdRBQ$.

By definition, an object in $\CdRBQ$ is a triple 
$$M=(M_{\dR}, M_\rB, c_M),$$
where $M_{\dR}$ (resp. $M_{\rB}$) is a finite-dimensional vector space over $\Qb$ (resp. $\Q$), and $c_M$ is an isomorphism of complex vector spaces
$$c_M : M_{\dR}\otimes_{\Qb}\C\lrasim M_{\rB}\otimes_{\Q}\C.$$
For obvious reasons, the vector space $M_\dR$ (resp. $M_\rB$) is called the de Rham realization (resp. the Betti realization) of $M$. The isomorphism $c_M$ will be referred to as the comparison isomorphism.

Given two objects $M$ and $N$ in $\CdRBQ$, the group $\Hom_{\dRB,\Q}(M,N)$ of morphisms from $M$ to $N$ in $\CdRBQ$ is the subgroup of $\mathrm{Hom}_\Qb(M_{\dR}, N_{\dR})\oplus \mathrm{Hom}_\Q(M_{\rB}, N_{\rB})$ consisting of pairs $(\phi_{\dR}, \phi_{\rB})$ such that the following diagram is commutative : 
$$
\begin{CD}
M_{\dR}\otimes_{\Qb} \C@>{\phi_{\dR}\otimes_\Qb \,\rm{Id}_\C}>> N_{\dR}\otimes_{\Qb}\C\\
@V{c_M}VV                                                                 @V{c_N}VV\\
M_{\rB}\otimes_{\Q}\C @>{\phi_{\rB}\otimes_\Q\, \rm{Id}_\C}>> N_{\rB}\otimes_{\Q}\C.
\end{CD}
$$   

In more naive terms, an object $M$ of $\CdRBQ$ may be seen as the data of the finite-dimensional $\C$-vector space
$M_{\C} := M_{\dR}\otimes_{\Qb}\C \simeq M_{\rB}\otimes_{\Z}\C,$
together with a ``$\Qb$-form'' $M_{\dR}$ and a ``$\Q$-form'' $M_{\rB}$ of $M_{\C}$. Then, for any two objects $M$ and $N$ in $\CdRBQ$, the morphisms from $M$ to $N$ in $\CdRBQ$ may be identified with the $\C$-linear maps $\phi_\C: M_\C \ra N_\C$ which are compatible with both the $\Qb$-forms and the $\Q$-forms of $M$ and $N$.

For any $k \in \Z,$ we denote by $\Q(k)$ the object of $\CdRBQ$ defined by $\Q(k)_{\dR} :=\Qb$ and $\Q(k)_\rB = (2\pi i)^k \Q$ inside $\C$. 

An integral version  $\CdRB$ of the category $\CdRBQ$ is defined similarly: $M_{\rB}$ is now a free $\Z$-module of finite rank, $c_M$ an isomorphism from $M_{\dR}\otimes_{\Qb}\C$ onto $M_{\rB}\otimes_{\Z}\C$, and $\phi_{\rB}$ a morphism of $\Z$-modules. For any $k \in \Z,$ we denote by $\Z(k)$ the object of $\CdRB$ defined by $\Z(k)_{\dR} :=\Qb$ and $\Z(k)_\rB = (2\pi i)^k \Z$ inside $\C$. 

The category $\CdRB$ (resp. $\CdRBQ$) is endowed with a natural structure of rigid tensor category, with $\Z(0)$ (resp. $\Q(0)$) as a unit object, and with tensor products and duals defined in an obvious way in terms of tensor products and duality of $\C$, $\Qb$, and $\Z$ (resp. $\Q$)-modules.

Analogs of the groups $H^{2k}_\Gr$ appearing in the Grothendieck period conjecture above may be defined  in the setting of $\CdRB$.

\begin{definition}
Let $M=(M_{dR}, M_{\rB}, c_M)$ be an object of $\CdRB$ (resp. $\CdRBQ$). The $\Z$-module (resp. $\Q$-vector space) $M_\Gr$ is defined by
$$M_{\Gr}:=\mathrm{Hom}_{\dRB}(\Z(0), M)$$
(resp. 
$$M_{\Gr}:=\mathrm{Hom}_{\dRB,\Q}(\Q(0), M)).$$
\end{definition}

Clearly, the space $M_{\Gr}$ can be identified with the intersection of $M_{\rB}$ and $c_M(M_{\dR})$ inside $M_{\rB}\otimes\C$.

\subsubsection{The torsor of periods of an element of $\CdRB$.} We briefly recall the notion of an abstract torsor -- defined without specifying a structure group. We refer to \cite{HuberMullerStach11} for sorites on abstract torsors. 

If $M=(M_{\dR}, M_{\rB}, c_M)$ be an object of $\CdRBQ$, we denote by $\mathrm{Iso}(M_{\dR}\otimes_{\Qb}\C, M_{\rB}\otimes_{\Q}\C)$ the complex variety of $\C$-linear isomorphisms from $M_{\dR}\otimes_{\Qb}\C$ to $M_{\rB}\otimes_{\Q}\C$.

\begin{definition}
Let $M=(M_{\dR}, M_{\rB}, c_M)$ be an object of $\CdRBQ$. Let $V$ be a closed algebraic subset of $\mathrm{Iso}(M_{\dR}\otimes_{\Qb}\C, M_{\rB}\otimes_{\Q}\C)$. We say that $V$ is a \emph{torsor} if for any triple $(f, g, h)$ of points of $V$, the element 
$$f\circ g^{-1}\circ h : M_{\dR}\otimes_{\Qb}\C\lra M_{\rB}\otimes_{\Q}\C$$
belongs to $V$.

We say that $V$ is \emph{defined over $\Qb$} if it may be obtained by field extension from some closed algebraic subset of the variety over $\Qb$ defined as the space of $\Qb$-linear isomorphisms $\mathrm{Iso}(M_{\dR}, M_{\rB}\otimes_{\Q}\Qb)$.
\end{definition}

As follows from the above definition, an intersection of torsors is again a torsor. As a consequence, we can consider the torsor generated by a subset of $\mathrm{Iso}(M_{\dR}\otimes_{\Qb}\C, M_{\rB}\otimes_{\Q}\C)$.

\begin{definition}
Let $M=(M_{\dR}, M_{\rB}, c_M)$ be an object of $\CdRBQ$. The \emph{torsor of periods} of $M$, which we denote by $\Omega_M$, is the torsor generated by the Zariski closure $Z_M$  of $c_M$ in the $\Qb$-scheme $\mathrm{Iso}(M_{\dR}, M_{\rB}\otimes_{\Q}\Qb)$. 
\end{definition}
By definition, $Z_M(\C)$ is the intersection of all $\Qb$-algebraic subsets of $\mathrm{Iso}(M_{\dR}\otimes_{\Qb}\C, M_{\rB}\otimes_{\Q}\C)$ that contain $c_M$.

At this level of generality, it is not easy to describe concretely the torsor of periods of a given object of $\CdRB$. However, Grothendieck classes provide equations for this torsor as follows.

Let $M=(M_{\dR}, M_{\rB}, c_M)$ be an object of $\CdRBQ$. Let $m$ and $n$ be two nonnegative integers, and let $k$ be an integer. Any isomorphism 
$$ f : M_{\dR}\lra M_{\rB}\otimes_\Q \Qb$$
induces a canonical isomorphism from $(M^{\otimes m}\otimes (M^{\vee})^{\otimes n}\otimes \Q(k))_\dR$ to $(M^{\otimes m}\otimes (M^{\vee})^{\otimes n}\otimes \Q(k))_\rB\otimes_\Q\Qb$. We will denote it by $f$ as well.

\begin{definition}\label{torsor}
Let $M=(M_{\dR}, M_{\rB}, c_M)$ be an object of $\CdRBQ$.

Given an element $\alpha$ in $(M^{\otimes m}\otimes (M^{\vee})^{\otimes n})_\Gr$, let $\Omega_{\alpha}$ be the torsor whose $\Qb$-points are the isomorphisms 
$$ f : M_{\dR}\lra M_{\rB}\otimes_\Q \Qb$$
such that 
$$f(\alpha_\dR)=\alpha_\rB.$$
The \emph{Tannakian torsor of periods} of $M$, which we denote by $\Omega_M^T$, is the intersection of the $\Omega_\alpha$ as $m, n$ and $\alpha$ vary. 
\end{definition}

By definition of Grothendieck classes, $\Omega_\alpha$ is defined over $\Qb$. Tautologically, since $$\alpha_\rB=c_{M^{\otimes m}\otimes (M^{\vee})^{\otimes n}}(\alpha_\dR),$$ the comparison isomorphism $c_M$ is a complex point of $\Omega_{\alpha}$. The lemma below follows.

\begin{lemma}
Let $M=(M_{\dR}, M_{\rB}, c_M)$ be an object of $\CdRBQ$. Then
$$\Omega_M\subset\Omega_M^T.$$
\end{lemma}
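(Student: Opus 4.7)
The plan is to show that $\Omega_M^T$ is itself a torsor containing $Z_M$; since $\Omega_M$ is by definition the smallest torsor containing $Z_M$, the desired inclusion $\Omega_M \subset \Omega_M^T$ will follow at once. The argument splits into two independent verifications, both essentially formal.

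First I would check that $Z_M \subset \Omega_\alpha$ for every Grothendieck class $\alpha \in (M^{\otimes m} \otimes (M^\vee)^{\otimes n} \otimes \Q(k))_\Gr$. Indeed, the very definition of a Grothendieck class says that $\alpha_{\rB} = c_{M^{\otimes m} \otimes (M^\vee)^{\otimes n} \otimes \Q(k)}(\alpha_{\dR})$, which (by functoriality of the formation of tensor powers and duals with respect to $c_M$) amounts exactly to the statement that the comparison isomorphism $c_M$ is a complex point of $\Omega_\alpha$. Since $\Omega_\alpha$ is defined over $\Qb$ by construction, and since $Z_M$ is by definition the smallest $\Qb$-algebraic subset of $\mathrm{Iso}(M_{\dR}, M_{\rB} \otimes_\Q \Qb)$ containing $c_M$, we obtain $Z_M \subset \Omega_\alpha$. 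Taking the intersection over all $m,n,k$ and all $\alpha$ yields $Z_M \subset \Omega_M^T$.

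Next I would verify that each $\Omega_\alpha$ is a torsor in the sense of the definition, so that (by the remark in the text that arbitrary intersections of torsors are torsors) $\Omega_M^T$ is again a torsor. Let $f,g,h \in \Omega_\alpha(\C)$. Denote by $T(\cdot)$ the $\C$-linear isomorphism induced by a point of $\mathrm{Iso}(M_{\dR}\otimes_\Qb\C, M_{\rB}\otimes_\Q\C)$ on the realizations of $M^{\otimes m} \otimes (M^\vee)^{\otimes n} \otimes \Q(k)$. A standard computation using the action on duals (where $T$ uses the inverse transpose) shows that $T$ is compatible with the operation $(f,g,h) \mapsto f \circ g^{-1} \circ h$, i.e.\ $T(f \circ g^{-1} \circ h) = T(f) \circ T(g)^{-1} \circ T(h)$. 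Applying this to $\alpha_{\dR}$ and using $T(f)(\alpha_{\dR}) = T(g)(\alpha_{\dR}) = T(h)(\alpha_{\dR}) = \alpha_{\rB}$ (hence $T(g)^{-1}(\alpha_{\rB}) = \alpha_{\dR}$) gives
$$
T(f \circ g^{-1} \circ h)(\alpha_{\dR}) = T(f)\bigl(T(g)^{-1}(T(h)(\alpha_{\dR}))\bigr) = T(f)(\alpha_{\dR}) = \alpha_{\rB},
$$
so $f\circ g^{-1}\circ h \in \Omega_\alpha(\C)$.

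Combining these two steps, $\Omega_M^T$ is a torsor that contains $Z_M$, hence contains the torsor generated by $Z_M$, which is exactly $\Omega_M$. There is no real obstacle here; the only slightly delicate point is the bookkeeping in the second step, namely keeping track of the fact that $T$ acts on $M^\vee$ through the inverse transpose, so that $T$ is a homomorphism for the ternary operation $(f,g,h)\mapsto f\circ g^{-1}\circ h$. Once that is in place, the proof is entirely formal.
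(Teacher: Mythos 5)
Your proof is correct and follows essentially the same route as the paper: show that $c_M$ is a $\C$-point of each $\Omega_\alpha$, which is defined over $\Qb$, so that $Z_M \subset \Omega_\alpha$ for every $\alpha$ and hence $Z_M \subset \Omega_M^T$; then conclude that $\Omega_M \subset \Omega_M^T$ because $\Omega_M^T$ is a torsor containing $Z_M$ and $\Omega_M$ is the smallest such. The only thing you add is the explicit verification that each $\Omega_\alpha$ is indeed a torsor (the inverse-transpose bookkeeping on $M^\vee$ showing $T$ respects $(f,g,h)\mapsto f\circ g^{-1}\circ h$), a point the paper simply asserts in its Definition of $\Omega_\alpha$.
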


\bigskip

The discussion above can be readily rephrased in a more concise way, using the fact that the category $\CdRBQ$ is a Tannakian category. Namely, both $M\mapsto M_\dR$ and $M\mapsto M_\rB\otimes_\Q \Qb$ are fiber functors with value in the category of $\Qb$-vector spaces. Those are the de Rham and the Betti realization of $\CdRBQ$, respectively. Isomorphisms between these two fiber functors give rise to a torsor under the Tannakian group of $\CdRBQ$. Now any object $M$ in $\CdRBQ$ gives rise to a Tannakian subcategory $\langle M\rangle$ generated by $M$. The torsor of isomorphisms between the de Rham and the Betti realization of $\langle M\rangle$ is precisely $\Omega_M^T$ -- hence the notation. It is a torsor under the Tannakian fundamental group of $M$ -- more precisely, this fundamental group may be realized as a $\Q$-subgroup $G$ of $GL(M_\rB)$, and $\Omega^T_M$ is a torsor under $G_{\Qb}$.

\begin{remark}\label{strict-inclusion}
In general, the inclusion of $\Omega_M$ in $\Omega^T_M$ is strict. Indeed, $\Omega_M$ is a torsor under a subgroup $H$ of $GL(M_\rB\otimes\Qb)$. If $\Omega^T_M=\Omega_M$, then the group $H$ would be equal to the group $G_{\Qb}$ above. In particular, it would be defined over $\Q$. However, it is easy to construct an object $M$ in $\CdRBQ$, with $\dim M_B=\dim M_\dR = 2$, such that the group $H\subset GL(M_\rB\otimes\Qb)$ above is not defined over $\Q$.
\end{remark}

\begin{remark}\label{lefschetz}
Let $M$ be an element of $\CdRBQ$. If $\Q(1)$ is an object of $\langle M\rangle$, for any $(m, n, k)\in\N^2\times\Z$, and any element $\alpha\in (M^{\otimes m}\otimes (M^{\vee})^{\otimes n}\otimes\Q(k))_\Gr$, the Tannakian torsor $\Omega_M^T$ is contained in $\Omega_\alpha$, where $\Omega_\alpha$ is defined by the obvious extension of Definition \ref{torsor}.
\end{remark}

\subsection{The Zariski closure of the torsor of periods and transcendence conjectures}
After the general discussion above, we specialize to the case of objects in $\CdRBQ$ coming from the cohomology of algebraic varieties.

\subsubsection{Torsor of periods and de Rham-Betti realization}\label{realization}

Let $X$ be a smooth projective variety over $\Qb$. As explained in the introduction, given a nonnegative integer $k$ and an integer $j$, the comparison isomorphism between de Rham and Betti cohomology allows us to associate to $X$ an object $H^k_\dRB(X, \Z(j))$ in $\CdRB$, its $k$-th de Rham-Betti cohomology group\footnote{Note that, by definition, the $\Z$-modules appearing in objects of $\CdRB$ are torsion-free. Accordingly, whenever Betti homology or cohomology groups with integer coefficients appear, it will be understood that these are considered modulo their torsion subgroup.} with coefficients in $\Z(j)$, as well as its rational version $H^k_\dRB(X, \Q(j))$ in $\CdRBQ$. Moreover, the compatibility of the cycle maps with the comparison isomorphism between de Rham and Betti cohomology induces a cycle map 
$$cl^X_\Gr : Z^k(X)\ra H^{2k}_\dRB(X, \Q(k))_\Gr.$$
Of course, $H^{2k}_\dRB(X, \Q(k))_\Gr=H^{2k}_\Gr(X, \Q(k))$ and this map coincides with the one introduced in \ref{statement}. If $k=1$, the map factorizes throughout $\Pic(X)$ and defines a map 
$$c_{1,\Gr}^X : \Pic(X)\ra H^{2}_\Gr(X, \Q(1)).$$ 
For any integers $k$ and $j$, we will write $H^k_\Gr(X, \Q(j))$ (resp. $H^k_\Gr(X, \Z(j))$) for $H^k_\dRB(X, \Q(j))_\Gr$ (resp. $H^k_\dRB(X, \Z(j))_\Gr$).

\bigskip

The de Rham (resp. Betti) realization of $H^k_\dRB(X, \Z(j))$ is by definition $H^k_\dR(X/\Qb)$ (resp. $$H^k(X^\an_\C, \Z(j)):= (2i\pi)^jH^k(X^\an_\C, \Z)).$$
The comparison isomorphism is the one induced from (\ref{compdRC}) and (\ref{compBC}). The comparison isomorphism can be rewritten in terms of actual periods. Indeed, the $k$-th homology group $H_{k}(X^\an_\C, \Z)$ is dual to $H^k(X^\an_\C, \Z(j))$ via the map 
$$\gamma\longmapsto \frac{1}{(2i\pi)^j}(\gamma, .),$$
where $(., .)$ denotes the canonical pairing between homology and cohomology. In these terms, the inverse of the comparison isomorphism
$$H^k_\dR(X/\Qb)\otimes\C\lra H^k(X^\an_\C, \Z(j))\otimes\C$$
is dual to the pairing
\begin{equation}\label{period}
H^k_\dR(X/\Qb)\otimes H_{k}(X^\an_\C, \Z(0))\lra\C, \alpha\otimes\gamma\longmapsto\frac{1}{(2i\pi)^j}\int_\gamma \alpha.
\end{equation}

We denote by $H^{\bullet}_\dRB(X, \Z(0))$ the object $\bigoplus_k H^k_\dRB(X, \Z(0))$ in $\CdRB$, and by $H^{\bullet}_{\dRB}(X, \Q(0))$ its rational variant in $\CdRBQ$. The discussion of the previous paragraph applied to $M=H^{\bullet}_{\dRB}(X, \Q(0))$ gives rise to torsors naturally associated to the de Rham-Betti cohomology of $X$.

\begin{lemma}\label{factor-lefschetz}
Let $X$ be a smooth projective variety over $\Qb$. Then $\Q(-1)$ is a direct factor of $H^2_\dRB(X, \Q(0))$.
\end{lemma}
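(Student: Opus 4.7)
The plan is to split $H^2_\dRB(X, \Q(0))$ using the first Chern class of an ample line bundle together with the Poincar\'e trace.

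Fix an ample line bundle $L$ on $X$ and set $n := \dim X$. The de Rham-Betti first Chern class $c_{1,\Gr}^X(L) \in H^2_\Gr(X, \Q(1))$ from \ref{realization} is, by the very definition of Grothendieck classes, an element of
$$\Hom_{\dRB, \Q}(\Q(0), H^2_\dRB(X, \Q(1))) = \Hom_{\dRB, \Q}(\Q(-1), H^2_\dRB(X, \Q(0))),$$
and so provides a morphism in $\CdRBQ$
$$\iota : \Q(-1) \lra H^2_\dRB(X, \Q(0)).$$
I would then construct an explicit retraction by cupping with $c_1(L)^{n-1}$ and applying a trace map. Since $c_1(L)^{n-1}$ is itself a Grothendieck class (in $H^{2n-2}_\Gr(X, \Q(n-1))$), the cup product
$$- \cup c_1(L)^{n-1} : H^2_\dRB(X, \Q(0)) \lra H^{2n}_\dRB(X, \Q(n-1))$$
is a morphism in $\CdRBQ$. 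The Poincar\'e trace, defined on the de Rham side by the isomorphism $\text{tr}_\dR : H^{2n}_\dR(X/\Qb) \lrasim \Qb$ and on the Betti side by integration over the fundamental class of $X^\an_\C$, fits together into an isomorphism $\text{tr} : H^{2n}_\dRB(X, \Q(n)) \lrasim \Q(0)$ in $\CdRBQ$; the Tate twist $(2\pi i)^n$ is exactly the one dictated by Proposition \ref{compdRB} applied to the class of a point on $X$.

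Letting $d := \text{tr}_\dR(c_{1,\dR}(L)^n) > 0$ be the degree of $X$ with respect to $L$, set
$$\pi := \tfrac{1}{d}\, \text{tr}\, \circ\, (- \cup c_1(L)^{n-1}) : H^2_\dRB(X, \Q(0)) \lra \Q(-1).$$
The composition $\pi \circ \iota : \Q(-1) \to \Q(-1)$ is, on the de Rham realization, multiplication by $d^{-1}\, \text{tr}_\dR(c_{1,\dR}(L)^n) = 1$, hence the identity morphism of $\Q(-1)$ in $\CdRBQ$. This exhibits $\Q(-1)$ as a direct factor of $H^2_\dRB(X, \Q(0))$, with complement $\ker(\pi)$.

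There is no serious obstacle here: the argument is entirely formal once one knows that Chern classes and the Poincar\'e trace are morphisms in $\CdRBQ$ with the Tate twists indicated above, both of which are direct consequences of Proposition \ref{compdRB} and the formalism recalled in \ref{statement} and \ref{realization}. Essentially the same splitting recurs in the classical treatment of Lefschetz-type results in motivic settings.
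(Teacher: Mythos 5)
Your proof is correct and rests on the same mechanism as the paper's: the hyperplane/ample class $c_1(L)$ embeds $\Q(-1)$, and the nonvanishing of $[H]^n$ together with the trace splits it off. The paper packages this by proving the bilinear form $\alpha\otimes\beta\mapsto\int_X\alpha\cup\beta\cup[H]^{n-2}$ is non-degenerate (invoking Poincar\'e duality and hard Lefschetz) and taking the orthogonal complement of $[H]$; you instead write down the normalized retraction $\pi=\tfrac1d\,\mathrm{tr}\circ(-\cup c_1(L)^{n-1})$ directly, which gives the same complement $\ker\pi$ but sidesteps hard Lefschetz entirely, using only that $\mathrm{tr}_\dR(c_{1,\dR}(L)^n)=d\neq 0$. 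That is a mild simplification, not a genuinely different route.
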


\begin{proof}
let $[H]$ be the cohomology class of a hyperplane section of $X$. This class corresponds to a map 
$\Q(0)\ra H^2_\dRB(X, \Q(1))$. By Poincar\'e duality and the hard Lefschetz theorem, the bilinear form 
$$\alpha\otimes\beta\mapsto \int_X\alpha\cup\beta\cup[H]^{\mathrm{dim}(X)-2}$$
is non-degenerate both on $H^2(X^\an_\C, \Q(0))$ and $H^2_\dR(X/\Qb)$. 

Since it is compatible to the comparison isomorphism -- as the latter is compatible with the algebra structure on cohomology and the trace map -- the orthogonal of $[H]$ in both $H^2_\rB(X, \Q)$ and $H^2_\dR(X/\Qb)$ corresponds to a subobject of $H^2_\dRB(X, \Q(1))$. Since $[H]^{\mathrm{dim}(X)}\neq 0$, this shows that $\Q.[H]$ is a direct factor of $H^2_\dRB(X, \Q(1))$, isomorphic to $\Q(0)$. As a consequence, $\Q(-1)$ is a direct factor of $H^2_\dRB(X, \Q(0))$. 
\end{proof}

\begin{definition}\label{torsorsdef}
Let $X$ be a smooth projective variety over $\Qb$.
\begin{enumerate}
\item The \emph{torsor of periods} of $X$, which we denote by $\Omega_X$, is the torsor of periods of $H^{\bullet}_{\dRB}(X, \Q(0))$, that is, the torsor generated by the Zariski-closure $Z_X:=Z_{H^{\bullet}_{\dRB}(X, \Q(0))}$  of $c_{H^{\bullet}_{\dRB}(X, \Q(0))}$ in the $\Qb$-scheme $\mathrm{Iso}(H^{\bullet}_{\dR}(X/\Qb), H^{\bullet}_{\rB}(X, \Q)\otimes_{\Q}\Qb)$.
\item The  \emph{Tannakian torsor of periods} of $X$, which we denote by $\Omega_X^T$, is the Tannakian torsor of periods of $H^{\bullet}_{\dRB}(X, \Q(0))$.
\item The  \emph{torsor of motivated periods} of $X$, which we denote by $\Omega_X^{And}$, is the intersection of the torsors $\Omega_{\alpha}$ defined in definition \ref{torsor}, where $\alpha$ runs through cycle classes of motivated cycles -- in the sense of Andr\'e \cite{Andre96b} -- in the de Rham-Betti realizations $H^{2k}_\dRB(X^n, \Q(k))$ as $n$ and $k$ vary.
\item The  \emph{motivic torsor of periods} of $X$, which we denote by $\Omega_X^{mot}$, is the intersection of the torsors $\Omega_{\alpha}$ defined in definition \ref{torsor}, where $\alpha$ runs through cycle classes of algebraic cycles in the de Rham-Betti realizations $H^{2k}_\dRB(X^n, \Q(k))$ as $n$ and $k$ vary.
\end{enumerate}
\end{definition}

The motivic torsor of periods $\Omega_X^{mot}$ is what is called the torsor of periods in \cite{AndreMotives04}, chapitre 23.  The cohomology of $X^n$ is a direct factor (!) of $H^{\bullet}_\dRB(X, \Q(0))^{\otimes n}$ by the K\"unneth formula. Using Lemma \ref{factor-lefschetz}, this justifies the definition of $\Omega_X^{And}$ and $\Omega_X^{mot}$. Under the standard conjectures \cite{Grothendieck68Stand}, $\Omega_X^{mot}$ is a torsor under the motivic Galois group of $X$.

\begin{lemma}\label{equations}
Let $X$ be a smooth projective variety over $\Qb$. The Tannakian torsor of periods of $X$ is the intersection of the torsors $\Omega_{\alpha}$ defined in definition \ref{torsor}, where $\alpha$ runs through Grothendieck classes in the de Rham-Betti realizations $H^{j}_\dRB(X^n, \Q(k))$ as $j$, $n$ and $k$ vary.
\end{lemma}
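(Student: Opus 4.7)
The plan is to establish the equality by matching, via the K\"unneth formula and Poincar\'e duality in $\CdRBQ$, the defining families of Grothendieck classes on both sides, using Lemma \ref{factor-lefschetz} to absorb Tate twists. Set $M := H^\bullet_\dRB(X, \Q(0))$ and $d := \dim X$.

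First I would record two natural isomorphisms in $\CdRBQ$. The K\"unneth formula yields $M^{\otimes n} \cong H^\bullet_\dRB(X^n, \Q(0))$, compatibly with the de Rham-Betti structures. Poincar\'e duality supplies, for each $k$, an isomorphism $H^k_\dRB(X, \Q(0))^\vee \cong H^{2d-k}_\dRB(X, \Q(d))$, so that $M^\vee \cong M \otimes \Q(d)$ as an object of $\CdRBQ$. Combining the two gives
$$M^{\otimes m} \otimes (M^\vee)^{\otimes n} \otimes \Q(k) \;\cong\; H^\bullet_\dRB(X^{m+n}, \Q(dn+k)).$$
By Lemma \ref{factor-lefschetz}, $\Q(-1)$ is a direct factor of $M$, so $\Q(1) \in \langle M \rangle$; Remark \ref{lefschetz} then permits us to include arbitrary Tate twists $\Q(k)$ in the tensor constructions defining $\Omega_X^T$, and the displayed isomorphism shows that as $m, n \geq 0$ and $k \in \Z$ vary, the right-hand side ranges over $H^\bullet_\dRB(X^p, \Q(q))$ for all $p \geq 0$ and all $q \in \Z$ (since $dn + k$ exhausts $\Z$).

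Next I would verify that, under these identifications, Grothendieck classes correspond bijectively and determine the same torsors $\Omega_\alpha$. The bijection on Grothendieck classes is automatic, since $(\cdot)_\Gr = \Hom_{\dRB, \Q}(\Q(0), \cdot)$ is a functor. Moreover, for any $\Qb$-linear isomorphism $f : M_\dR \to M_\rB \otimes_\Q \Qb$ and its canonical extension to tensor constructions, the condition $f(\alpha_\dR) = \alpha_\rB$ for a class $\alpha \in H^\bullet_\dRB(X^p, \Q(q))$ is equivalent to the conjunction of the analogous conditions for its graded components $\alpha_j \in H^j_\dRB(X^p, \Q(q))$, because the isomorphisms above respect the cohomological grading. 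Consequently, intersecting $\Omega_\alpha$ over Grothendieck classes in the ungraded tensor construction $M^{\otimes m} \otimes (M^\vee)^{\otimes n} \otimes \Q(k)$ yields the same torsor as intersecting over Grothendieck classes in each graded piece $H^j_\dRB(X^{m+n}, \Q(dn+k))$.

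Assembling these identifications gives the claimed equality $\Omega_X^T = \bigcap_\alpha \Omega_\alpha$, where $\alpha$ ranges over Grothendieck classes in $H^j_\dRB(X^n, \Q(k))$ as $j, n, k$ vary. The main technical point, which I expect to require the most care, is confirming that the K\"unneth and Poincar\'e duality isomorphisms are compatible with the comparison isomorphism $c_M$ and thus descend to $\CdRBQ$ with the correct Tate twists; once these compatibilities are in place, the remainder of the argument is purely formal.
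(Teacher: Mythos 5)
Your overall route --- K\"unneth and Poincar\'e duality to trade tensor constructions on $M=H^{\bullet}_{\dRB}(X,\Q(0))$ for the cohomology of powers of $X$, with Lemma \ref{factor-lefschetz} and Remark \ref{lefschetz} absorbing the Tate twists --- is exactly the paper's. But two points are not right as written. First, for an arbitrary point $f$ of the torsor, the condition $f(\alpha_\dR)=\alpha_\rB$ for a class of $H^{\bullet}_{\dRB}(X^p,\Q(q))$ is \emph{not} equivalent to the same condition on its graded components: the canonical extension of $f$ comes from an ungraded isomorphism $M_\dR\to M_\rB\otimes_\Q\Qb$ and does not preserve the cohomological grading, so "respecting the grading" is not available. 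What you actually need is weaker and true: the graded (K\"unneth) projections are morphisms in $\CdRBQ$, so the components of a Grothendieck class are again Grothendieck classes, and fixing all components implies fixing their sum by linearity; the converse implication is never used.

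Second, and more seriously, the step you defer to the end (compatibility of K\"unneth and Poincar\'e duality with $c_M$) is not where the difficulty lies --- that compatibility is standard, and for K\"unneth the identification is essentially the definition of the torsors in the statement. The genuine point is compatibility with an \emph{arbitrary} $f$: the extension of $f$ to $M^{\vee}$ is the contragredient, so the identification $M^{\vee}\cong M\otimes\Q(d)$ does not commute with the extensions of a general $f$; transporting a Grothendieck class $\alpha$ of $M^{\otimes m}\otimes(M^{\vee})^{\otimes n}$ to a class of $H^{\bullet}_{\dRB}(X^{m+n},\Q(dn))$ therefore changes the individual torsor $\Omega_\alpha$ in general, and only the total intersections can be compared. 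To close this you must observe that the class encoding Poincar\'e duality --- the diagonal class in $H^{2d}_{\dRB}(X\times X,\Q(d))_{\Gr}$ --- is itself a Grothendieck class, hence is among the conditions cutting out the intersection in the statement of Lemma \ref{equations}; every point of that intersection fixes it, and fixing it is precisely the statement that the extensions of $f$ intertwine the duality isomorphism. (The other inclusion needs no duality at all: each class in $H^{j}_{\dRB}(X^n,\Q(k))$ is a Grothendieck class of $M^{\otimes n}\otimes\Q(k)$, and Remark \ref{lefschetz} applies directly.) With this observation added, your transport argument goes through and coincides with the paper's (equally terse) proof.
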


\begin{proof}
Lemma \ref{factor-lefschetz} and Remark \ref{lefschetz} show that $\Omega_X^T$ is the intersection of the $\Omega_{\alpha}$, as $\alpha$ runs through Grothendieck classes in tensor products of the cohomology groups of $X$, their dual and $\Q(k)$. Using Poincaré duality and the K\"unneth formula, this proves the lemma.
\end{proof}

\begin{corollary}\label{inclusion}
Let $X$ be a smooth projective variety over $\Qb$. We have
\begin{equation}\label{inclus}
Z_X\subset \Omega_X\subset \Omega_X^T\subset \Omega_X^{And}\subset \Omega_X^{mot}.
\end{equation}
\end{corollary}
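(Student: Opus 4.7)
The plan is to verify the four inclusions one by one; each is either definitional or a direct consequence of the lemmas preceding the corollary. The key observation governing the last two inclusions is that an intersection of torsors indexed by a larger set is smaller, so it suffices to exhibit inclusions of the indexing sets of classes used to cut out each torsor.

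The first inclusion $Z_X \subset \Omega_X$ is immediate from the definition of $\Omega_X$ as the torsor generated by $Z_X$. For the second inclusion $\Omega_X \subset \Omega_X^T$, I would invoke the lemma just stated (that $\Omega_M \subset \Omega_M^T$ for any object $M$ of $\CdRBQ$) applied to $M = H^\bullet_\dRB(X, \Q(0))$. Concretely, $\Omega_X^T$ is an intersection of subschemes $\Omega_\alpha$ that are defined over $\Qb$, and is itself a torsor, and by the tautology $\alpha_\rB = c_{M^{\otimes m} \otimes (M^\vee)^{\otimes n}}(\alpha_\dR)$ it contains the comparison isomorphism $c_X$; hence it contains the Zariski closure $Z_X$ of $c_X$, and being a torsor defined over $\Qb$, it contains the torsor $\Omega_X$ generated by $Z_X$.

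For the third inclusion $\Omega_X^T \subset \Omega_X^{And}$, the main tool is Lemma \ref{equations}, which identifies $\Omega_X^T$ with the intersection of $\Omega_\alpha$ over all Grothendieck classes $\alpha \in H^j_\dRB(X^n, \Q(k))_\Gr$ as $j, n, k$ vary. Now any motivated cycle class, in the sense of Andr\'e \cite{Andre96b}, has a de Rham--Betti realization that is a Grothendieck class: indeed, motivated cycles are obtained from algebraic cycles by the operations of pullback, pushforward, intersection, and the Lefschetz involution, all of which are compatible with both the de Rham and Betti cycle class maps and hence with the comparison isomorphism (by Proposition \ref{compdRB} and functoriality). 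The index set defining $\Omega_X^{And}$ is therefore contained in the one defining $\Omega_X^T$, and the inclusion of torsors $\Omega_X^T \subset \Omega_X^{And}$ follows.

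The final inclusion $\Omega_X^{And} \subset \Omega_X^{mot}$ is the most transparent: every algebraic cycle is by definition a motivated cycle, so the indexing set for $\Omega_X^{mot}$ is contained in that for $\Omega_X^{And}$, and intersecting over a smaller set yields a larger torsor. There is no real obstacle in this corollary; once Lemma \ref{equations} and the tautological behavior of motivated versus algebraic cycle classes are in hand, the chain \eqref{inclus} assembles immediately. The only point requiring some care is ensuring that motivated classes are indeed Grothendieck classes — which reduces, via the stability of motivated cycles under standard cohomological operations, to the compatibility of algebraic cycle classes with the comparison isomorphism already recorded in Proposition \ref{compdRB}.
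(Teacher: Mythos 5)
Your proof is correct and follows the same route the paper takes (the paper states the corollary without proof, as it follows formally from the preceding Lemma that $\Omega_M \subset \Omega_M^T$, from Lemma \ref{equations}, and from the containments between the indexing sets of classes used to cut out each torsor). Your explicit justification that motivated classes are Grothendieck classes — by reducing to compatibility of algebraic cycle classes with the comparison isomorphism via the stability of Andr\'e's operations — is exactly the content that the paper records more formally later in Proposition \ref{propAlgHodgeGr}, so the reasoning is sound and consistent with the paper's own chain algebraic $\Rightarrow$ motivated $\Rightarrow$ absolute Hodge $\Rightarrow$ absolute rational $\Rightarrow$ Grothendieck.
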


\subsubsection{Transcendence and full faithfulness conjectures for smooth projective varieties}\label{conjectures}

The Grothen\-dieck Period  Conjecture of \cite{Grothendieck66} (note (10), p. 102) is the following.

\begin{conjecture}\label{Grothendieck}
Let $X$ be a smooth projective variety over $\Qb$. Then 
$$Z_X=\Omega_X^{mot}.$$
In other words, the comparison isomorphism is dense in the motivic torsor of periods.
\end{conjecture}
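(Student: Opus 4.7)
The statement is the original Grothendieck Period Conjecture in its strongest form and is wildly open; any plan is therefore necessarily conditional and aimed at partial progress rather than a full resolution. My strategy is to decompose the desired equality by means of the chain
$$Z_X \subset \Omega_X \subset \Omega_X^T \subset \Omega_X^{And} \subset \Omega_X^{mot}$$
of Corollary \ref{inclusion}, and to attack each of the four successive inclusions separately, since each encodes a distinct major conjecture.

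The innermost equality $\Omega_X^{And} = \Omega_X^{mot}$ asserts that every motivated class on a power $X^n$ is algebraic; here I would proceed variety-by-variety, exploiting specific geometric structure such as Kuga--Satake correspondences for $K3$ surfaces or incidence constructions of Beauville--Donagi for cubic fourfolds, exactly as the later sections of the paper propose to do for their specific targets. The middle equality $\Omega_X^T = \Omega_X^{And}$ is a full-faithfulness assertion for the de Rham--Betti realization functor restricted to Andr\'e's category of motivated motives; my plan would be to import the absolute Hodge techniques of Deligne (spreading out, specialization, principle B) translated into the de Rham--Betti framework.

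The outer inclusions are transcendental rather than motivic in nature. The equality $\Omega_X = \Omega_X^T$ says there are enough Grothendieck classes on the powers $X^n$ to cut out the torsor generated by $c_M$, while $Z_X = \Omega_X$ says that the Zariski closure of the single $\C$-point $c_M$ already satisfies the torsor property $f \circ g^{-1} \circ h \in Z_X$ for all $\C$-points $f, g, h$. The natural tools here are the Schneider--Lang--type transcendence theorems on commutative algebraic groups recalled in Section 3 of the paper, together with the biextension description used to establish $GPC^1$ for abelian varieties in \cite{Bost12}; these methods can control $Z_X$ whenever $X$ has enough rigid geometric structure that its periods are extracted from Lie-algebra maps between $\Qb$-algebraic groups.

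The main obstacle is the outermost transcendence inclusion $Z_X = \Omega_X$. Even granting the Hodge conjecture, Andr\'e's full-faithfulness conjecture, and the standard conjectures in characteristic zero --- none of which are available --- this last step still encodes essentially the entire content of the Kontsevich--Zagier conjecture on periods, and no presently known method addresses it in any generality beyond the cases (abelian varieties and their products) where periods can be explicitly identified with values of classical transcendental functions subject to Schneider--Lang. Any realistic proof strategy therefore must either restrict to very constrained classes of $X$, or await a fundamentally new technique for producing $\Qb$-algebraic relations among period integrals.
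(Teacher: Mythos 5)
Your assessment is accurate: the statement is the Grothendieck Period Conjecture itself, which the paper states as an open conjecture and does not prove (it only records the known CM elliptic curve case of Chudnovsky and relates the conjecture to the inclusions of Corollary~\ref{inclusion} and to the conjectures $GPC^k$ via Propositions~\ref{relation} and~\ref{relationmot}). Your decomposition along the chain $Z_X\subset\Omega_X\subset\Omega_X^T\subset\Omega_X^{And}\subset\Omega_X^{mot}$, and your identification of the outermost transcendence step (whether $Z_X$ is already a torsor) as the part untouched by any known method, is exactly the framing the paper itself gives, so there is nothing to correct.
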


Given Corollary \ref{inclusion}, Conjecture \ref{Grothendieck} would imply that all of the inclusions in (\ref{inclus}) are equalities. As in \cite[4.2]{KontsevichZagier01}, it has a simple interpretation in terms of periods, meaning that any polynomial relation between periods of the form  
$$\frac{1}{(2i\pi)^j}\int_\gamma \alpha,$$
where $j$ is any integer and $\alpha$ (resp. $\gamma$) is an element of $H^k_\dR(X^n/\Qb)$ (resp. $H_{k}((X^n)^\an_\C, \Q)$) for some nonnegative $n$, is induced by algebraic cycles on self-products of $X$.

There are few cases where Conjecture \ref{Grothendieck} is known, the most significant one being perhaps the case where $X$ is an elliptic curve with complex multiplication, due to Chudnovsky \cite{Chudnovsky78}. 

Our next result relates the conjectures $GPC^k$ to the inclusions (\ref{inclus}).

\begin{proposition}\label{relation}
Let $X$ be a smooth projective variety over $\Qb$. 
\begin{enumerate}
\item Assume that $GPC^k(X^n)$ holds for every $k$ and $n$, and that $H^j_{\Gr}(X^n, \Q(k))=0$ unless $j=2k$. Then $\Omega^T_X=\Omega^{mot}_X$.
\item Assume that $X$ satisfies the standard conjectures of \cite{Grothendieck68Stand} and that $\Omega^T_X=\Omega^{mot}_X$. Then $GPC^k(X^n)$ holds for every $k$ and $n$, and $H^j_{\Gr}(X^n, \Q(k))=0$ unless $j=2k$.
\end{enumerate}
\end{proposition}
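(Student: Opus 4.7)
The strategy is to compare the two torsors by comparing their defining families of equations. By Lemma \ref{equations}, $\Omega_X^T$ is cut out inside $\mathrm{Iso}(H^\bullet_\dR(X/\Qb), H^\bullet_\rB(X,\Q)\otimes_\Q\Qb)$ by the torsors $\Omega_\alpha$ attached to all Grothendieck classes $\alpha \in H^j_\Gr(X^n,\Q(k))$, as $j$, $n$, $k$ vary, whereas by Definition \ref{torsorsdef}(4), $\Omega_X^{mot}$ is cut out by the same type of equations but only for those $\alpha$ arising as cycle classes of algebraic cycles (so \emph{a fortiori} living in $H^{2k}_\Gr(X^n,\Q(k))$). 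The inclusion $\Omega_X^T\subset\Omega_X^{mot}$ is automatic from Corollary \ref{inclusion}, so in both directions the game is to show that the extra equations cutting out $\Omega_X^T$ beyond those already cutting out $\Omega_X^{mot}$ are trivial.

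For (1), assume $GPC^k(X^n)$ for every $k,n$ and that $H^j_\Gr(X^n,\Q(k))=0$ unless $j=2k$. The vanishing assumption restricts the Grothendieck classes defining $\Omega_X^T$ to degree $j=2k$, and $GPC^k(X^n)$ identifies each such class with an algebraic cycle class. Hence the two families of equations coincide term-by-term, and $\Omega_X^T=\Omega_X^{mot}$.

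For (2), I plan to argue via the motivic Galois group. Under the standard conjectures for $X$, the Tannakian subcategory of motives generated by $X$ (with morphisms defined by algebraic cycle classes, homological and numerical equivalence then coinciding) is a semisimple Tannakian category whose fundamental group is a well-defined reductive $\Q$-group $G_X^{mot}$, and $\Omega_X^{mot}\times_\Qb\Qb$ is a torsor under $(G_X^{mot})_\Qb$. Under these same conjectures, the invariants of $G_X^{mot}$ acting on $H^j(X^n_\C,\Q)\otimes\Q(k)$ are exactly the classes of algebraic cycles of codimension $k$ on $X^n$, which live only in degree $j=2k$. A Grothendieck class $\alpha\in H^j_\Gr(X^n,\Q(k))$ is by definition a morphism $\Q(0)\to H^j_\dRB(X^n,\Q(k))$ in $\CdRBQ$, equivalently (via K\"unneth and Lemma \ref{factor-lefschetz}) a vector in a tensor construction on $H^\bullet_\dRB(X,\Q(0))$ on which every element of $\Omega_X^T(\Qb)$ acts trivially. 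If $\Omega_X^T=\Omega_X^{mot}$, this vector is then fixed by $G_X^{mot}$, hence is the class of an algebraic cycle of codimension $k$ on $X^n$; this forces $j=2k$ and proves $GPC^k(X^n)$.

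The only genuinely subtle point will be importing the correct Tannakian formalism from \cite{AndreMotives04}: namely, the identification of $G_X^{mot}$-invariants in tensor constructions on Betti cohomology with algebraic cycle classes (which requires the standard conjectures), and the tautological identification of $\Omega_X^T$-fixed (respectively $\Omega_X^{mot}$-fixed) vectors with Grothendieck (respectively algebraic) classes. Once these two equivalences are granted, the proposition reduces to a direct translation between the ``equations'' presentation of the torsors used in Definition \ref{torsorsdef} and Lemma \ref{equations} and the ``invariants'' presentation of the classes they cut out.
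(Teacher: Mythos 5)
Your proposal is correct and follows essentially the same route as the paper: for (1) it is the same one-line comparison of the defining equations via Lemma~\ref{equations}, and for (2) it is the same argument through the motivic Galois group, with your characterization of $G^{mot}_X$-invariants as algebraic cycle classes being exactly the content of Deligne's Principle~A (or rather its Tannakian proof, which is what the paper cites from Blasius). The only nitpick is phrasing: elements of the torsor $\Omega_X^T(\Qb)$ do not ``act,'' so the clean formulation is that fixing a base point $f_0\in\Omega^{mot}_X(\Qb)$ and using that every $f$ is $g\cdot f_0$ with $g\in G^{mot}_X(\Qb)_{\Qb}$ shows $\alpha_\rB$ is $G^{mot}_X$-invariant; your argument clearly intends this, so it is a cosmetic rather than a mathematical gap.
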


\begin{proof}
First assume that $GPC^k(X^n)$ holds for every $k$ and $n$, and that $H^j_{\Gr}(X^n, \Q(k))=0$ unless $j=2k$. Lemma \ref{equations} then shows that $\Omega_X^T=\Omega_X^{mot}$, as they are defined by the same equations. 

Now assume that $X$ satisfies the standard conjectures of \cite{Grothendieck68Stand} and that $\Omega^T_X=\Omega^{mot}_X$. This implies that the motivic Galois group $G_{mot}(X)$ of $X$ -- with respect to the Betti realization -- is a well-defined reductive group over $\Q$, coming from the Tannakian category of pure motives generated by $X$, and that $\Omega_X^T$ is a torsor under $G_{mot}(X)_{\Qb}$.

Let $\alpha$ be an element of $H^j_{\dRB}(X^n, \Q(k))_\Gr$ for some $j,k$ and $n$. By definition of $\Omega_X^T$, and since $\Omega_X^{mot}=\Omega_X^T$, if $f$ is any point of $\Omega_X^{mot}$, $f(\alpha_\rB)=c_X(\alpha_\dR)$, where $c_X$ is the comparison isomorphism. Deligne's principle A of \cite{DeligneMilneOgusShih}, or rather its Tannakian proof as in \cite{Blasius94}, 2.11, implies that $\alpha$ is the cohomology class of an algebraic cycle. In particular, $j=2k$, which proves the proposition.
\end{proof}

The same proof gives the following results for motivated cycles.

\begin{proposition}\label{relationmot}
Let $X$ be a smooth projective variety over $\Qb$. 
\begin{enumerate}
\item Assume that for every $k$ and $n$, classes in $H^{2k}_\Gr(X^n, \Q(k))$ are classes of motivated cycles, and that $H^j_{\Gr}(X^n, \Q(k))=0$ unless $j=2k$. Then $\Omega^T_X=\Omega^{And}_X$. 
\item Assume that $\Omega^T_X=\Omega^{And}_X$. Then for every $k$ and $n$, Grothendieck classes in $H^{2k}_\dRB(X^n, \Q(k))$ are classes of motivated cycles, and $H^j_{\Gr}(X^n, \Q(k))=0$ unless $j=2k$.
\end{enumerate}
\end{proposition}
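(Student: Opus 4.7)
The plan is to transcribe the proof of Proposition \ref{relation} almost verbatim, with motivated cycles and André's motivated Galois group playing the roles previously played by algebraic cycles and the motivic Galois group. The essential advantage of the motivated setting is that André's category of motives from \cite{Andre96b} is an unconditional semisimple Tannakian category over $\Q$, so we do not need to invoke the standard conjectures in part (2).

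For part (1), I would argue exactly as in Proposition \ref{relation}(1). By Lemma \ref{equations}, the Tannakian torsor $\Omega_X^T$ is the intersection of the torsors $\Omega_\alpha$ as $\alpha$ ranges over Grothendieck classes in $H^j_\dRB(X^n, \Q(k))$, for all $j$, $n$, $k$. The hypothesis says that such classes vanish unless $j = 2k$, in which case they are classes of motivated cycles. Thus the defining equations of $\Omega_X^T$ coincide with the defining equations of $\Omega_X^{And}$, yielding $\Omega_X^T = \Omega_X^{And}$.

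For part (2), I would use André's theorem that pure motives modulo motivated cycles form a semisimple Tannakian category, producing a reductive motivated Galois group $G_{And}(X)$ over $\Q$, realized as a $\Q$-subgroup of $GL(H^\bullet_\rB(X, \Q))$, under which $\Omega_X^{And}$ is a torsor (after base change to $\Qb$). Let $\alpha \in H^j_\dRB(X^n, \Q(k))_\Gr$. Since $\Omega_X^T = \Omega_X^{And}$, and since the comparison isomorphism $c_X$ lies in $\Omega_X^T$, every $\Qb$-point $f$ of $\Omega_X^{And}$ satisfies $f(\alpha_\dR) = \alpha_\rB$; equivalently, $\alpha_\rB$ is invariant under $G_{And}(X)$ acting on the appropriate tensor construction. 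Applying Deligne's Principle A in the Tannakian form of \cite{Blasius94}, 2.11, to the category of André motives, we conclude that $\alpha$ lies in the $\Q$-span of classes of motivated cycles. Since motivated cycles have classes only in degrees of the form $2k$, this forces $j = 2k$.

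The only real subtlety — as opposed to a genuine obstacle — is checking that Deligne's Principle A applies in the motivated setting: this requires that motivated cycles contain algebraic cycles, are stable under tensor products, Künneth components, cup product, and Poincaré duality, and that the category they define is Tannakian. All of these facts are established in \cite{Andre96b}, so the argument goes through unchanged.
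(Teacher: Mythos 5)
Your proposal is correct and is exactly the argument the paper intends: the paper's proof of Proposition \ref{relationmot} consists of the single remark that ``the same proof'' as for Proposition \ref{relation} applies, with motivated cycles replacing algebraic ones. Your observation that Andr\'e's unconditional semisimple Tannakian category supplies the reductive group making $\Omega_X^{And}$ a torsor, so that no standard-conjectures hypothesis is needed in part (2), is precisely the point justifying the paper's omission of that hypothesis.
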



The results above explains in which respect the conjectures $GPC^k$ are weaker than Conjecture \ref{Grothendieck}. Indeed, they do not address whether the Zariski-closure $Z_X$ of the comparison isomorphism is actually a torsor. We have nothing to say in this direction -- see the recent work of Ayoub \cite{Ayoub12} for related results in the function field case. 

In more concrete terms, this corresponds to the fact that while Conjecture \ref{Grothendieck} addresses the transcendence of any single period $\frac{1}{(2i\pi)^j}\int_\gamma \alpha$, the conjectures $GPC^k$ deal with the existence, given a de Rham cohomology class $\alpha$, of some element $\gamma$ of Betti cohomology such that $\frac{1}{(2i\pi)^j}\int_\gamma \alpha$ is transcendental.

Additionally, it should be noted that the torsor $\Omega_X$ only depends on the triple 
$$(H^{\bullet}_\dR(X/\Qb), H^{\bullet}_\rB(X, \Q)\otimes \Qb, H^{\bullet}_\dR(X/\Qb)\otimes\C\ra H^{\bullet}_\rB(X, \Q)\otimes\C),$$
and as such does not depend on the $\Q$-structure of $H^{\bullet}_\rB(X, \Q)\otimes \Qb$, whereas $\Omega^T_X$ a priori does -- see Remark \ref{strict-inclusion}.


\bigskip

Propositions \ref{relation} and \ref{relationmot} also show that the conjectures $GPC^k$ should be supplemented by the conjectures asserting that, if $X$ is a smooth projective variety over $\Qb$, then $H^j_{\Gr}(X, \Q(k))=0$ unless $j=2k$. 

For general $j$ and $k$, this conjecture seems widely open, and corresponds to the lack of a theory of weights for the de Rham-Betti realization of the cohomology of smooth projective varieties over $\Qb$. We will discuss this issue in \ref{wfiltration}.

\subsubsection{A few remarks about the mixed case}

Most of the discussion and the conjectures above could be extended to the framework of  arbitrary varieties over $\Qb$, without smoothness or projectivity assumptions. The de Rham-Betti realization still makes sense, as well as the notion of Grothendieck classes, as we recall at the beginning of section 4. It is possible, with some care, to state conjectures similar to $GPC^k$ in this setting.

As in the previous paragraph, the Kontsevich--Zagier conjecture of \cite[Section 4]{KontsevichZagier01} bears a similar relationship to the conjectures $GPC^k$ in the mixed case as Conjecture \ref{Grothendieck} does in the pure case. As the results of our paper mostly deal with the pure case, we will not delve in this theoretical setting any further. Let us however give one result in that direction -- another one for open varieties will be discussed below in Section 4.

Observe that, for any given smooth variety $X$ over $\Qb$, there exists a cycle map from the higher Chow groups $CH^i(X, n)$ to the $\Q$-vector space $H^{2i-n}_\Gr(X, \Q(i))$ of Grothendieck classes in the de Rham--Betti group $H^{2i-n}_\dRB(X, \Q(i))$. As in the usual case of Chow groups, this is due to the compatibility of the cycle maps to the Betti and de Rham cohomology; see for instance \cite{Jannsen90}.

\begin{theorem}
For any smooth quasi-projective variety $U$ over $\Qb$, the cycle map 
$$CH^1(U, 1)_\Q\lra H^1_\Gr(U, \Q(1))$$
is surjective.
\end{theorem}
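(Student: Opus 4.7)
The plan is to first identify $CH^1(U,1)_\Q$ with $\cO(U)^\times \otimes \Q$ via the standard isomorphism $CH^1(U,1) \simeq H^0(U_{\rm Zar}, \cO_U^\times)$ valid for smooth $U$, under which the cycle map sends a unit $f \in \cO(U)^\times$ to the Grothendieck class represented by $df/f$ in $H^1_\dR(U/\Qb)$ and by the pullback under $f : U \ra \bfGm$ of the canonical generator of $H^1(\bfGm^{\rm an}, \Q(1))$ in Betti cohomology. It thus suffices to show that every $\beta \in H^1_\Gr(U, \Q(1))$ equals, up to a positive rational multiple, the class $\cl^U_\Gr(g)$ attached to some $g \in \cO(U)^\times$.

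First I would fix a smooth projective compactification $X$ of $U$ with $D = X \setminus U = D_1 \cup \cdots \cup D_m$ a simple normal crossing divisor. The Gysin/residue sequence is compatible with the de Rham and Betti comparison isomorphisms, so it lifts to an exact sequence in $\CdRBQ$:
\begin{equation*}
0 \lra H^1_\dRB(X, \Q(1)) \lra H^1_\dRB(U, \Q(1)) \stackrel{\mathrm{Res}}{\lra} \bigoplus_{i=1}^m \Q(0) \stackrel{c_1}{\lra} H^2_\dRB(X, \Q(1)).
\end{equation*}
Applying $\Hom_{\dRB,\Q}(\Q(0), -)$, the class $\beta$ yields a residue vector $r = (r_i) \in \bigoplus_i \Q$ whose image under $c_1$ vanishes in $H^2_\Gr(X, \Q(1))$. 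Equivalently, the $\Q$-divisor $D' := \sum_i r_i D_i$ has vanishing Grothendieck cycle class on $X$, hence in particular vanishing Betti class, so $[D']$ lies in $\Pic^0(X)(\Qb) \otimes \Q$.

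The heart of the argument is to upgrade this to $[D'] = 0$ in $\Pic^0(X)(\Qb) \otimes \Q$, using $\beta$ itself. Assuming $r \neq 0$, the preimage of $\Q \cdot r$ under $\mathrm{Res}$ is a subobject $E \subset H^1_\dRB(U, \Q(1))$ in $\CdRBQ$ sitting in an extension
\begin{equation*}
0 \lra H^1_\dRB(X, \Q(1)) \lra E \lra \Q(0) \lra 0,
\end{equation*}
and $\beta$ defines a morphism $\Q(0) \ra E$ in $\CdRBQ$ that splits it. The transcendence results of Section~3 --- specifically the description of biextensions of abelian varieties by $\bfGm$ in terms of de Rham--Betti data --- yield the de Rham--Betti counterpart of Carlson's formula, identifying $\Ext^1_{\dRB,\Q}(\Q(0), H^1_\dRB(X, \Q(1)))$ with $\Pic^0(X)(\Qb) \otimes \Q$ and sending the above extension class to $[D']$. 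Hence $[D'] = 0$ in $\Pic^0(X)(\Qb) \otimes \Q$, so some positive integer multiple $ND'$ is the divisor of a rational function $g \in K(X)^\times$; since this divisor is supported on $D$, the function $g$ is a unit on $U$.

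It then remains to observe that $\cl^U_\Gr(g)$ has residue $Nr$, so $\beta - \tfrac{1}{N}\cl^U_\Gr(g)$ has zero residue and comes from $H^1_\Gr(X, \Q(1))$, which vanishes by the weight-zero results already established earlier in this section (the vanishing of $H^j_\Gr(X, \Q(k))$ for $j < 2k$ on smooth projective $X$). This also subsumes the case $r = 0$, where $\beta$ lies directly in $H^1_\Gr(X, \Q(1))$. I expect the main obstacle to be the extension-splitting step --- the identification $\Ext^1_{\dRB,\Q}(\Q(0), H^1_\dRB(A, \Q(1))) \simeq A(\Qb) \otimes \Q$ for an abelian variety $A$ --- which is not formal and rests on the Schneider--Lang type transcendence input of Section~3 underlying $GPC^1$ for abelian varieties.
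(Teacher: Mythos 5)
Your proposal is essentially the paper's own proof, with the formal reduction made explicit: the paper cites Jannsen's Corollary 9.10 to reduce surjectivity of the cycle map to injectivity of the de Rham--Betti Abel--Jacobi map $\Pic^0(X)\otimes\Q\to\Ext^1_{\dRB,\Q}(\Q(0),H^1_\dRB(X,\Q(1)))$, and then deduces that injectivity from Theorem~\ref{SLmor} applied to the Albanese and an extension of it by $\Gm$; your residue-sequence argument, the extraction of the subobject $E$ split by $\beta$, and the Carlson-type formula unwind exactly that reduction, and the transcendence input you invoke is the same. One small correction: the map $\Pic^0(X)(\Qb)\otimes\Q\to\Ext^1_{\dRB,\Q}(\Q(0),H^1_\dRB(X,\Q(1)))$ is injective but not an isomorphism (the target, computed as $H^1(X^\an_\C,\C)/(H^1_\dR(X/\Qb)+H^1(X^\an_\C,\Q(1)))$, is generally strictly larger than the Albanese), so you should say ``embedding'' rather than ``identifying''; your argument only uses injectivity, so nothing else changes.
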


\begin{proof} We only give a sketch of the proof and leave the details to the reader.

Using resolution of singularities, we can find a smooth projective variety $X$ over $\Qb$ containing $U$ such that the complement of $U$ in $X$ is a divisor $D$. \emph{Mutatis mutandis}, the arguments in \cite[Corollary 9.10]{Jannsen90} show that the cycle map $CH^1(U, 1)_\Q\lra H^1_\Gr(U, \Q(1))$ is surjective if and only if the Abel-Jacobi map in de Rham-Betti cohomology
$$\Pic^0(X)\otimes\Q\lra \mathrm{Ext}^1_\dRB(\Q(0), H^1_\dRB(X, \Q(1)))$$
is injective. This  Abel-Jacobi map coincides with the map $\kappa_{\dRB}$ attached to the Albanese variety $A$ of $X$ that is defined in \cite{Bost12}, Section 5.5. As observed in \cite{Bost12}, Proposition 5.4,  its injectivity is a consequence of Theorem \ref{SLmor} \emph{infra}, applied to $G_1 = A$ and to $G_2$ an extension of $A$ by $\Gm_\Qb.$ \end{proof}

\section{Transcendence and de Rham-Betti cohomology of abelian varieties. Applications to biextensions and divisorial correspondences}

\subsection{Transcendence and periods of commutative algebraic groups over $\Qb$}\label{TransSL}

For any smooth algebraic group over some field $k$, we denote 
$$\Lie G := T_e G$$
its Lie algebra -- a $k$-vector space of rank $\dim G$. A $k$-morphism $\phi : G_1\ra G_2$ of smooth algebraic groups over $k$ induces a $k$-linear map 
$$\Lie \phi : = D\phi(e) : \Lie G_1\ra \Lie G_2$$
between their Lie algebras. This construction is clearly compatible with extensions of the base field $k$.

Let $G$ be a connected commutative algebraic group over $\C$. Its analytification $G^\an$ is a connected commutative complex Lie group. The exponential map $\exp_G$ of this Lie group is an \'etale, hence surjective, morphism of complex Lie groups from the vector group $\Lie G$ defined by the Lie algebra of $G$  to this analytification $G^\an.$ The kernel of $\exp_G$
$$\Per G := \Ker \exp_G$$
--- the group of ``periods" of $G$ --- is a discrete subgroup of $\Lie G$, and fits into an exact sequence of commutative complex Lie groups:
$$0 \lra \Per{G}\, \hlra \Lie G \xrightarrow{\exp_{G}} G^\an \lra 0.$$ 

Let $G_1$ and $G_2$ be two connected commutative algebraic groups over $\Qb$. Consider an element $\phi$ in the $\Z$-module $\Hom_{\Qb-\ggp}(G_{1}, G_{2})$ of morphisms of algebraic groups over $\Qb$ from $G_1$ to $G_2$. This $\Qb$-linear map
$$\Lie \phi := D\phi (e) : \Lie G_{1} \lra \Lie G_{2}$$
is compatible with the exponential maps of $G_{1,\C}$ and $G_{2,\C}$, in the sense that the $\C$-linear map $\Lie \phi_\C=(\Lie\phi)_\C$ fits into a commutative diagram
 $$
 \begin{CD}
 \Lie G_{1\C} @>{\Lie \phi_{\C}}>> \Lie G_{2\C} \\
@V{\exp_{G_{1\C}}}VV                @VV{\exp_{G_{2\C}}}V \\
G_{1\C}^\an @>{\phi_{\C}}>>       G_{2\C}^\an
\end{CD}.
$$
In particular,
$$(\Lie \phi)_{\C}(\Per G_{1\C}) \subset \Per G_{2\C}.$$

This construction defines an injective morphism of $\Z$-modules:
\begin{equation}\label{LieGamma}
\Lie : \Hom_{\Qb-\ggp}(G_{1}, G_{2}) \lra \{ \psi \in \Hom_{\Qb}(\Lie G_{1},\Lie G_{2})\vert \psi_{\C}(\Per G_{1\C}) \subset \Per G_{2\C} \}.
\end{equation}

In the next sections, we shall use the following description of the morphisms of connected commutative algebraic groups over $\Qb$ in terms of the associated morphisms of Lie algebras and period groups:

\begin{theorem}\label{SLmor}
If the group of periods $\Per G_{1\C}$ generates $\Lie G_{1\C}$ as a complex vector space, then the map (\ref{LieGamma}) is an isomorphism of $\Z$-modules.
\end{theorem}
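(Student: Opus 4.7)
The map \eqref{LieGamma} has just been shown to be injective, so only surjectivity is at issue. Given a $\Qb$-linear map $\psi : \Lie G_{1} \to \Lie G_{2}$ with $\psi_{\C}(\Per G_{1\C}) \subset \Per G_{2\C}$, the goal is to produce $\phi \in \Hom_{\Qb-\ggp}(G_1, G_2)$ with $\Lie \phi = \psi$. My plan is to realize the graph of $\psi$ as the Lie algebra of an algebraic subgroup of $G_1 \times G_2$ defined over $\Qb$, and then to check that this subgroup is the graph of the desired morphism. Set $G := G_1 \times_{\Qb} G_2$ and let $\Gamma \subset \Lie G = \Lie G_1 \oplus \Lie G_2$ denote the graph of $\psi$, a $\Qb$-vector subspace of dimension $\dim G_1$. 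Under the canonical splitting $\Per G_{\C} = \Per G_{1\C} \oplus \Per G_{2\C}$, every element $u_\gamma := (\gamma, \psi_{\C}(\gamma))$ with $\gamma \in \Per G_{1\C}$ belongs to $\Gamma_{\C} \cap \Per G_{\C}$, and the hypothesis that $\Per G_{1\C}$ spans $\Lie G_{1\C}$ over $\C$ forces the $u_\gamma$ to span $\Gamma_{\C}$ over $\C$.

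The central step is to invoke a transcendence theorem \`a la Schneider--Lang on commutative algebraic groups (in the form of W\"ustholz's analytic subgroup theorem, or of the statement used in \cite{Bost12}), which reads: for any nonzero period $u$ of a connected commutative algebraic group $G$ over $\Qb$ and any $\Qb$-linear subspace $V \subset \Lie G$ such that $u \in V_{\C}$, there exists a nonzero connected algebraic subgroup $H \subset G$ defined over $\Qb$ with $\Lie H \subset V$ and $u \in \Lie H_{\C}$. Applied to each nonzero $u_\gamma$ with $V = \Gamma$, this produces algebraic subgroups $H_\gamma \subset G$ over $\Qb$ with $\Lie H_\gamma \subset \Gamma$ and $u_\gamma \in \Lie (H_\gamma)_{\C}$. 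Since $\Per G_{1\C}$ is finitely generated, only finitely many $\gamma$ are needed to span $\Gamma_\C$. Letting $H$ be the identity component of the smallest algebraic subgroup of $G$ over $\Qb$ containing all these $H_\gamma$, one has $\Lie H \subset \Gamma$ while $\Lie H_{\C}$ contains every $u_\gamma$; since these span $\Gamma_{\C}$, this inclusion is an equality, so $\Lie H = \Gamma$.

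It remains to verify that $H$ is the graph of a $\Qb$-morphism $G_1 \to G_2$. The first projection $p : H \to G_1$ satisfies $\Lie p : \Lie H = \Gamma \to \Lie G_1$ equal to the projection onto the first factor, which is a $\Qb$-linear isomorphism because $\Gamma$ is the graph of $\psi$; hence $p$ is \'etale. To promote $p$ to an isomorphism, observe that the hypothesis $\psi_{\C}(\Per G_{1\C}) \subset \Per G_{2\C}$ means that $\psi_{\C}$ descends through the exponentials to an analytic homomorphism $f : G_{1,\C}^{\an} \to G_{2,\C}^{\an}$, so that $\exp_{G_\C}(\Gamma_{\C})$ is literally the graph of $f$ in $G_{\C}^{\an}$ and therefore projects bijectively onto $G_{1,\C}^{\an}$. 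But $\exp_{G_\C}(\Gamma_{\C})$ is a connected analytic subgroup of $G_{\C}^{\an}$ with Lie algebra $\Gamma_{\C} = \Lie H_{\C}$, hence coincides with $H_{\C}^{\an}$; thus $p_{\C}$ is bijective, hence an isomorphism of algebraic groups over $\C$, and by descent $p$ is an isomorphism over $\Qb$. Writing $p^{-1} = (\mathrm{id}_{G_1}, \phi)$ then produces $\phi \in \Hom_{\Qb-\ggp}(G_1, G_2)$ with $\Lie \phi = \psi$, as desired. The only non-formal ingredient is the transcendence statement supplying the $H_\gamma$; everything else is formal group theory combined with the standard equivalence between connected analytic subgroups of $G_{\C}^{\an}$ and $\C$-subspaces of $\Lie G_{\C}$.
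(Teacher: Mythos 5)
Your proof is correct, and the overall strategy---realize the graph $\Gamma$ of $\psi$ as the Lie algebra of a $\Qb$-algebraic subgroup $H$ of $G_1\times G_2$ and then show the first projection $H \to G_1$ is an isomorphism---is the natural one. But the transcendence ingredient you invoke is genuinely different from the one behind the paper's citations. The paper gives no proof in the body: it refers to Bertrand (\cite{Bertrand83}, Section~5, Prop.~B) and Bost (\cite{Bost12}, Cor.~4.3), which derive the statement from the classical theorems of Schneider and Lang. You instead appeal to W\"ustholz's analytic subgroup theorem, in the refined form: for a nonzero period $u$ and any $\Qb$-subspace $V\subset\Lie G$ with $u\in V_\C$, there is a nontrivial connected $\Qb$-algebraic subgroup $H$ with $\Lie H\subset V$ \emph{and} $u\in\Lie H_\C$. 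This is a substantially stronger tool than Schneider--Lang, and it buys you a very clean modular argument: one algebraic subgroup per period, then take the sum. Two points are worth flagging so the argument is airtight. First, the conclusion ``$u\in\Lie H_\C$'' is not the most commonly quoted form of the analytic subgroup theorem (which asserts only the existence of some nontrivial $H$ with $\Lie H\subset V$); what you need is the equivalent reformulation that the \emph{smallest} $\Qb$-subspace of $\Lie G$ whose complexification contains $u$ is already the Lie algebra of a $\Qb$-algebraic subgroup. This does follow, either by a short bootstrapping through successive quotients $G/H_i$ or by quoting the refined statement directly (as in \cite{BakerWuestholz07}). Second, because $u$ is a period, $\exp_{G_\C}(u)=e$ is the trivial point, so one must use the formulation in terms of nonzero elements of $\exp_{G_\C}^{-1}(G(\Qb))$ rather than nontrivial $\Qb$-points of the analytic subgroup; the version in \cite{BakerWuestholz07}, which the paper itself invokes a section later for the strengthening of the $H^1$-transcendence statement, does cover periods. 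With these clarifications, the rest is routine and correct: $\Lie H\subset\Gamma$ with $\Lie H_\C\supset\Gamma_\C$ forces $\Lie H=\Gamma$; $\Lie p$ an isomorphism gives $p$ \'etale; the identification of $H^{\an}_\C$ with the graph of the analytic homomorphism $f$ induced by $\psi_\C$ gives bijectivity of $p_\C$, hence $p$ is an isomorphism and its inverse produces the required $\phi$ with $\Lie\phi=\psi$.
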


This theorem is a consequence of the classical transcendence theorems \emph{\`a la} Schneider--Lang (\cite{Schneider41}, \cite{Lang65}, \cite{Waldschmidt87}). See \cite{Bertrand83}, Section 5, Prop. B, and \cite{Bost12}, Cor. 4.3.

When $G_1$ is the multiplicative group $\Gm_{,\Qb}$, then $\Lie G_1$ is a one dimensional $\Qb$-vector space, with basis the invariant vector field $ X\frac{\partial}{\partial X}$, and the group of periods $\Per G_{1\C}$ is the subgroup $2\pi i\Z X\frac{\partial}{\partial X}$ of $\C X\frac{\partial}{\partial X}$. The hypothesis of Theorem \ref{SLmor} is then satisfied, and we obtain:

\begin{corollary}\label{SLmorGm} For any connected commutative algebraic group $G$ over $\Qb,$ we have an isomorphism of $\Z$-modules:
$$
\begin{array}{rcl}
\Hom_{\Qb-\ggp}(\Gm_\Qb, G)  & \stackrel{\sim}{\lra}   &\{v \in \Lie G \mid 2\pi i v \in \Per G_\C \} = \Lie G  \cap \frac{1}{2\pi i} \Per G_\C   \\ \phi
  & \longmapsto   & \Lie \phi (X\frac{\partial}{\partial X}).   
\end{array}
$$
\end{corollary}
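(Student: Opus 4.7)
The statement is a direct specialization of Theorem \ref{SLmor} to the case $G_1 = \Gm_{,\Qb}$, $G_2 = G$, so the plan is simply to verify the hypothesis of that theorem and then unwind both sides of the isomorphism (\ref{LieGamma}).

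\textbf{Step 1: verifying the hypothesis.} The Lie algebra $\Lie \Gm_{,\Qb}$ is the free one-dimensional $\Qb$-vector space spanned by the invariant vector field $X\partial/\partial X$. The analytification $(\Gm_{,\C})^{\an}$ is $\C^\times$, and its exponential map, as a morphism of complex Lie groups, is identified with $\Lie\Gm_{,\C} = \C\cdot X\partial/\partial X \to \C^\times$ sending $z\cdot X\partial/\partial X$ to $e^z$. Consequently $\Per\Gm_{,\C} = 2\pi i\,\Z\cdot X\partial/\partial X$, which is a rank-one lattice in the one-dimensional complex vector space $\Lie\Gm_{,\C}$ and therefore generates it over $\C$. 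Hence Theorem \ref{SLmor} applies.

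\textbf{Step 2: identifying the target.} By Theorem \ref{SLmor}, taking the differential at the origin yields an isomorphism of $\Z$-modules
$$
\Lie : \Hom_{\Qb-\ggp}(\Gm_{,\Qb}, G) \lrasim \{\psi \in \Hom_{\Qb}(\Lie\Gm_{,\Qb}, \Lie G) \mid \psi_\C(\Per\Gm_{,\C}) \subset \Per G_\C\}.
$$
Since $\Lie\Gm_{,\Qb}$ is free of rank one on $X\partial/\partial X$, any $\Qb$-linear map $\psi$ from it to $\Lie G$ is determined by the single vector $v := \psi(X\partial/\partial X) \in \Lie G$, and the evaluation $\psi \mapsto v$ is a $\Z$-linear isomorphism between $\Hom_{\Qb}(\Lie\Gm_{,\Qb}, \Lie G)$ and $\Lie G$. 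Under this identification, the condition $\psi_\C(\Per\Gm_{,\C}) \subset \Per G_\C$ amounts, because $\Per\Gm_{,\C}$ is generated as a group by $2\pi i\,X\partial/\partial X$, to the single condition $2\pi i\, v \in \Per G_\C$, that is $v \in \Lie G \cap \tfrac{1}{2\pi i}\Per G_\C$.

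\textbf{Step 3: conclusion.} Composing the two identifications, the map
$$
\phi \longmapsto \Lie\phi\bigl(X\tfrac{\partial}{\partial X}\bigr)
$$
realizes an isomorphism of $\Z$-modules from $\Hom_{\Qb-\ggp}(\Gm_{,\Qb}, G)$ onto $\{v \in \Lie G \mid 2\pi i\,v \in \Per G_\C\}$, as claimed. There is no real obstacle here: the entire content of the corollary is packaged in Theorem \ref{SLmor}, and the remaining work is the elementary check that $\Per\Gm_{,\C}$ is a rank-one lattice so that the transcendence hypothesis is satisfied, together with the trivial identification of $\Qb$-linear maps out of the one-dimensional space $\Lie\Gm_{,\Qb}$ with vectors in $\Lie G$.
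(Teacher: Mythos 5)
Your proof is correct and follows exactly the paper's approach: the paper likewise computes $\Lie\Gm_{,\Qb} = \Qb\cdot X\partial/\partial X$ and $\Per\Gm_{,\C} = 2\pi i\,\Z\cdot X\partial/\partial X$, observes that this verifies the hypothesis of Theorem~\ref{SLmor}, and obtains the corollary by evaluating linear maps at $X\partial/\partial X$. Your Step~2 merely spells out the identification $\Hom_{\Qb}(\Lie\Gm_{,\Qb},\Lie G)\simeq \Lie G$ a bit more explicitly than the paper, but there is no difference in substance.
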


We finally recall that the theorem of Schneider-Lang also provides a Lie theoretic description of morphisms of $\Qb$-algebraic groups of source the additive group $\Ga_{,\Qb}$; see for instance \cite{Bost12}, Theorem 4.2:

\begin{theorem}\label{SLmorGa} For any connected commutative algebraic group $G$ over $\Qb,$ we have an isomorphism of $\Z$-modules\footnote{This still holds, as a bijection of sets, when $G$ is an arbitrary algebraic group over $\Qb$.}:
$$
\begin{array}{rcl}
\Hom_{\Qb-\ggp}(\Ga_\Qb, G)  & \stackrel{\sim}{\lra}   &\{v \in \Lie G \mid \exp_{G_\C} (\C v) \cap G(\Qb) \neq \emptyset \}    \\ \phi
  & \longmapsto   & \Lie \phi (\frac{\partial}{\partial X}).   
\end{array}
$$
\end{theorem}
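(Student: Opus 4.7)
The plan is to verify well-definedness, injectivity, and surjectivity of the indicated map in turn. For any $\phi \in \Hom_{\Qb-\ggp}(\Ga_\Qb, G)$, set $v := \Lie\phi(\partial/\partial X) \in \Lie G$. By naturality of the exponential for complex Lie groups, $\phi^{\an}_\C(\lambda) = \exp_{G_\C}(\lambda v)$ for all $\lambda \in \C$. Specializing to $\lambda = 1 \in \Ga_\Qb(\Qb) = \Qb$ yields $\exp_{G_\C}(v) = \phi(1) \in G(\Qb)$, confirming that $v$ belongs to the target set. Injectivity of $\phi \mapsto v$ is then immediate, since $v$ determines $\phi^{\an}_\C$ through this formula, $\phi^{\an}_\C$ determines $\phi_\C$ by GAGA, and $\phi_\C$ determines $\phi$ by uniqueness of the $\Qb$-form.

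The substantive content is surjectivity. Let $v \in \Lie G$ and $\lambda_0 \in \C$ satisfy $\exp_{G_\C}(\lambda_0 v) \in G(\Qb)$. The case $v = 0$ is trivial (take $\phi = 0$), so assume $v \neq 0$. The goal is to produce an algebraic $\Qb$-morphism $\phi : \Ga_\Qb \to G$ with $\Lie\phi(\partial/\partial X) = v$; equivalently, to show that the entire holomorphic one-parameter subgroup $f : \C \to G^{\an}_\C$, $\lambda \mapsto \exp_{G_\C}(\lambda v)$, is the analytification of such a $\phi$. This is exactly the statement of \cite[Theorem 4.2]{Bost12}, and the Schneider--Lang transcendence theorem is the essential input.

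The core of the proof runs as follows. By the group law on $G$ and the hypothesis, $f(n\lambda_0) = n \cdot \exp_{G_\C}(\lambda_0 v) \in G(\Qb)$ for every $n \in \Z$, so the image of $f$ meets $G(\Qb)$ on an infinite set. Embedding a $\Qb$-projective completion of $G$ into some $\PP^N_\Qb$, the coordinate functions of $f$ are meromorphic on $\C$ of finite order, and the field they generate over $\Qb$ is stable under the derivation $d/d\lambda$ (since $v \in \Lie G$ is $\Qb$-rational). The Schneider--Lang theorem then bounds by one the transcendence degree of this field over $\Qb$, which forces the Zariski closure $H \subset G_\C$ of $f(\C)$ to have dimension exactly one (using $v \neq 0$). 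A Galois-descent argument, based on the $\Qb$-rationality of $v$, shows that $H$ descends to a one-dimensional $\Qb$-subgroup of $G$. This subgroup is isomorphic to $\Ga_\Qb$, $\Gm_\Qb$, or an elliptic curve over $\Qb$; the last two are excluded by Hermite--Lindemann, respectively Schneider's theorem, which for $v \neq 0$ would force $\lambda_0 v \in \Per H_\C$ and hence $\exp_{H_\C}(\lambda_0 v) = e$, contradicting either the nontriviality of $f$ or the Zariski-density of $f(\C)$ in $H_\C$. Thus $H \cong \Ga_\Qb$, and the unique parametrization $\phi : \Ga_\Qb \to H \hookrightarrow G$ sending $\partial/\partial X$ to $v$ is the desired preimage. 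The main obstacle throughout is the Schneider--Lang step; all else is standard Lie-theoretic and descent bookkeeping.
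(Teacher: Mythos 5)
The paper itself gives no proof of this statement: it is recalled from the literature, with the citation \cite{Bost12}, Theorem 4.2, so there is no internal argument to compare yours with. Your skeleton for surjectivity (Schneider--Lang bounds the transcendence degree of the field generated by the coordinates of the one-parameter subgroup, so its Zariski closure $H$ is a one-dimensional $\Qb$-subgroup; classify $H$ as $\Ga$, $\Gm$ or an elliptic curve and rule out the last two by Hermite--Lindemann and Schneider) is indeed the standard route to this result, and the well-definedness and injectivity parts are fine.

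There is, however, a genuine gap in the surjectivity step, forced on you by taking the displayed condition literally. As printed, the right-hand side is all of $\Lie G$: for every $v$ one has $e=\exp_{G_\C}(0\cdot v)\in G(\Qb)$, so ``$\exp_{G_\C}(\C v)\cap G(\Qb)\neq\emptyset$'' is vacuous, and the assertion would then be false (e.g.\ $G=\Gm$, where $\Hom_{\Qb-\ggp}(\Ga_\Qb,\Gm)=0$ while $\Lie\Gm\neq 0$). The intended condition, as in \cite{Bost12}, Theorem 4.2, is $\exp_{G_\C}(v)\in G(\Qb)$ (equivalently, $\exp_{G_\C}(\lambda_0 v)\in G(\Qb)$ for some $\lambda_0\in\Qb^\times$). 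Your argument allows an arbitrary $\lambda_0\in\C$, and that is exactly where it breaks: if $\lambda_0=0$ the set $\{n\lambda_0\}$ is not infinite; more seriously, excluding $H\simeq\Gm$ or an elliptic curve via Hermite--Lindemann, resp.\ Schneider, requires the tangent vector $\lambda_0 v$ to be a $\Qb$-rational element of $\Lie H$, which fails when $\lambda_0$ is transcendental. And it must fail: for $G=\Gm$, $v=X\tfrac{\partial}{\partial X}$, $\lambda_0=\log 2$, one has $\exp_{G_\C}(\lambda_0 v)=2\in\Gm(\Qb)$, yet there is no nonzero morphism $\Ga_\Qb\to\Gm$, so no argument can rule out $H\simeq\Gm$ under your hypothesis. (Even in the corrected setting, the conclusion drawn from Hermite--Lindemann/Schneider should be $\lambda_0 v=0$, contradicting $v\neq 0$ and $\lambda_0\neq 0$; the contradiction you invoke --- that $\exp_{H_\C}(\lambda_0 v)=e$ contradicts nontriviality or Zariski-density of $f$ --- is not one, since $\lambda_0 v$ could be a nonzero period.) With $\lambda_0\in\Qb^\times$, in particular $\lambda_0=1$, your outline becomes the standard and correct Schneider--Lang proof of the intended statement.
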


As any morphism in $\Hom_{\Qb-\ggp}(\Ga_\Qb, G)$ is either zero or injective, this immediately yields: 

\begin{corollary}\label{corSLmorGa} For any connected commutative algebraic group $G$ over $\Qb,$ we have:
$$ \Lie G  \cap  \Per G_\C = \{0\}.$$
\end{corollary}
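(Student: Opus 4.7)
The plan is to deduce this directly from Theorem \ref{SLmorGa} together with the immediately preceding remark that any nonzero element of $\Hom_{\Qb-\ggp}(\Ga_\Qb, G)$ is injective as a morphism of algebraic groups. The strategy is to argue by contradiction: a nonzero $v \in \Lie G \cap \Per G_\C$ will produce a nonzero algebraic morphism $\Ga_\Qb \to G$ whose complex analytification fails to be injective, which is absurd.

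More concretely, first I would pick $v \in \Lie G \cap \Per G_\C$. Since $v \in \Per G_\C = \Ker \exp_{G_\C}$, we have $\exp_{G_\C}(v) = e_G$, which is a $\Qb$-point of $G$. In particular the intersection $\exp_{G_\C}(\C v) \cap G(\Qb)$ is non-empty. Since $v \in \Lie G$, Theorem \ref{SLmorGa} then provides an element $\phi \in \Hom_{\Qb-\ggp}(\Ga_\Qb, G)$ with $\Lie \phi (\partial/\partial X) = v$.

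Next I would show $\phi = 0$. Because $\phi_\C(1) = \exp_{G_\C}(v) = e_G$, the element $1 \in \Ga(\C)$ lies in $\Ker \phi_\C$. If $\phi$ were nonzero, then by the remark following Theorem \ref{SLmorGa} — which rests on the fact that a proper algebraic subgroup of $\Ga_\Qb$ in characteristic zero must be trivial — $\phi$ would be injective, and hence so would $\phi_\C$, contradicting $1 \in \Ker \phi_\C$. Therefore $\phi = 0$, and consequently $v = \Lie \phi (\partial/\partial X) = 0$.

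There is essentially no obstacle here: the whole content is contained in Theorem \ref{SLmorGa}, and the corollary is little more than an unpacking of the characterization of $\Hom_{\Qb-\ggp}(\Ga_\Qb, G)$ in the special case where the putative one-parameter subgroup has $1$ in its kernel. The only point to keep in mind is the need to invoke characteristic zero to conclude that a nonzero morphism from $\Ga_\Qb$ is injective.
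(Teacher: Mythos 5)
Your proof is correct and follows essentially the same route as the paper, which deduces the corollary from Theorem \ref{SLmorGa} together with the observation that any morphism in $\Hom_{\Qb-\ggp}(\Ga_\Qb, G)$ is either zero or injective; you merely spell out the ``immediate'' step (a period $v$ gives $\exp_{G_\C}(v)=e\in G(\Qb)$, the resulting $\phi$ has $1$ in $\Ker\phi_\C$, hence $\phi=0$ and $v=0$). No changes needed.
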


\subsection{Divisorial correspondences and biextensions of abelian varieties}\label{DCBiextAb}

In this section, we gather diverse basic facts concerning divisorial correspondences between smooth projective varieties and biextensions of abelian varieties. We state them in the specific framework of varieties over $\Qb$, where they will be used in this article, although, suitably formulated, they still hold over an arbitrary base. For proofs and more general versions, we refer the reader to \cite{Lang59} Chapter VI, \cite{Raynaud70} (notably Chapters III, IV, and XI),  \cite{VarAbOrsay}, and \cite{SGA7I} Expos\'es VII and VIII (notably VII.2.9 and VIII.4).

\subsubsection{Notation}
Let $X$ be a smooth projective variety over $\Qb$, equipped with some ``base point" $x \in X(\Qb).$ To $X$ is attached its Picard group $\Pic  (X) := H^1(X,\cO_X^\times)$, its connected Picard variety $\Pic^0_{X/\Qb}$ (the abelian variety that classifies line bundles over $X$ algebraically equivalent to zero), and its N\'eron-Severi group
$$NS(X) := \Pic(X) / \Pic^0_{X/\Qb}(\Qb),$$
that is, the group of line bundles over $X$ up to algebraic equivalence. 

We shall also consider the Albanese variety of $X$, defined as the abelian variety 
$$\Alb (X) := (\Pic^0_{X/\Qb})^\wedge$$
dual to  $\Pic^0_{X/\Qb}$, and the Albanese morphism
$$\alb_{X,x} : X \lra \Alb(X).$$
It is characterized by the fact that the pullback by $(\alb_{X,x}, \rm{Id}_{\Pic^0_{X/\Qb}})$ of a Poincar\'e bundle on $(\Pic^0_{X/\Qb})^\wedge \times 
\Pic^0_{X/\Qb}$ is isomorphic to a Poincar\'e bundle over $X \times \Pic^0_{X/\Qb}$ (trivialized along $\{X\} \times \Pic^0_{X/\Qb}$). It is also a ``universal pointed morphism" from $(X,x)$ to an abelian variety.

\subsubsection{Divisorial correspondences}

Let $X$ and $Y$ be two smooth projective varieties over $\Qb$, equipped with base points $x \in X(\Qb)$ and $y\in Y(\Qb).$

The group of divisorial correspondences $DC(X,Y)$ between $X$ and $Y$ may be defined as a subgroup of $\Pic(X\times Y)$ by the following condition, for any line bundle $L$ over $X\times Y$ of class $[L]$ in  $\Pic(X\times Y)$:
$$[L] \in DC(X,Y) \Longleftrightarrow L_{\mid X\times \{y\}} \simeq \cO_{X\times \{y\}} \mbox{ and }  L_{\mid \{x\}\times Y} \simeq \cO_{\{x\}\times Y}.$$

This construction is clearly functorial in $(X, x)$ and $(Y, y)$: if $(X',x')$ and $(Y',y')$ are two pointed smooth projective varieties over $\Qb$ and if 
$f : X' \lra X$ and $g: Y' \lra Y$
are two $\Qb$-morphisms such that $f(x') =x$ and $f(y') =y$, then the pullback morphism
$$(f,g)^\ast : \Pic(X \times Y) \lra \Pic(X' \times Y')$$
defines, by restriction, a morphism of abelian groups:
$$(f,g)^\ast: DC(X,Y) \lra DC(X',Y').$$

Observe that $\Pic(X)$ and $\Pic(Y)$ may be identified with subgroups of $\Pic(X \times Y)$ (by means of the pullback by the projections from $X \times Y$ to $X$ and $Y$), and that, taking these identifications into account, we get a functorial decomposition of the Picard group of the product $X \times Y$:
 \begin{equation}\label{PicXY}
\Pic(X \times Y)   \lrasim \Pic(X) \oplus \Pic(Y) \oplus DC(X,Y).
 \end{equation}
 
 Moreover the Picard variety $\Pic^0_{X\times Y/\Qb}$ may be identified with $\Pic^0_{X/\Qb}\times\Pic^0_{Y/\Qb}$, and consequently the subgroup $\Pic^0_{X\times Y/\Qb}(\Qb)$ of $\Pic(X\times Y)$ with the product of the subgroups  $\Pic^0_{X/\Qb}(\Qb)$ and  $\Pic^0_{Y/\Qb}(\Qb)$ of $\Pic(X)$ and $\Pic(Y)$. The composite map 
 $$DC(X,Y) \hlra \Pic (X \times Y) \twoheadrightarrow NS(X\times Y)$$
 is therefore injective, and, if we still denote $DC(X,Y)$ its image in $NS(X\times Y)$, the decomposition (\ref{PicXY}) becomes, after quotienting by $\Pic^0_{X\times Y/\Qb}(\Qb)$ :
\begin{equation}\label{NSXY}
NS(X \times Y)   \lrasim NS(X) \oplus NS(Y) \oplus DC(X,Y).
 \end{equation}
 
Also observe that, through the cycle maps, the decompositions (\ref{PicXY}) and (\ref{NSXY}) are compatible with the K\"unneth decomposition of the second cohomology group of $X \times Y$.

 \subsubsection{Divisorial correspondences and biextensions of abelian varieties}
 
 The next two propositions show that the group $DC(X,Y)$ of divisorial correspondences associated to some smooth projective varieties (over $\Qb$) may be identified with the group $ \Biext^1_{\Qb-\ggp}(\Alb(X), \Alb(Y);\Gm)$ of biextensions of their Albanese varieties by the multiplicative group $\Gm$.

 \begin{proposition}\label{prop:DCAlb} For any two smooth projective varieties $X$ and $ Y$ over $\Qb,$ equipped with base points $x\in X(\Qb)$ and $y \in Y(\Qb),$ the Albanese morphisms $\alb_{X,x}$ and $\alb_{Y,y}$ induce isomorphisms of groups of divisorial correspondences:
  \begin{equation}\label{DCAlb}
(\alb_{X,x}, \alb_{Y,y})^\ast : DC(\Alb(X), \Alb(Y)) \lrasim DC(X,Y).
 \end{equation}
\end{proposition}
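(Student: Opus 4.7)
The plan is to construct an explicit inverse to $(\alb_{X,x}, \alb_{Y,y})^\ast$ using the universal property of the Albanese variety. I would first recall the classical identification, for any two abelian varieties $A$, $B$ over $\Qb$ with their canonical base points: the Poincar\'e bundle on $A \times A^\wedge$ induces a canonical isomorphism
$$DC(A, B) \lrasim \Hom_{\Qb-\ggp}(A, B^\wedge),$$
sending a divisorial correspondence $M$ to the morphism $a \mapsto [M_{\mid \{a\}\times B}] \in \Pic^0_{B/\Qb}(\Qb) = B^\wedge(\Qb)$; see \cite{VarAbOrsay}. In particular a divisorial correspondence between abelian varieties is determined, up to isomorphism, by the morphism it induces on either factor into the dual of the other.

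Given $L \in DC(X,Y)$, I would then define a map into $DC(\Alb(X), \Alb(Y))$ as follows. The assignment $x' \mapsto [L_{\mid \{x'\}\times Y}]$ defines a $\Qb$-morphism $\phi_L: X \to \Pic_{Y/\Qb}$; since $L_{\mid \{x\}\times Y}$ is trivial and $X$ is geometrically integral, $\phi_L$ factors through the connected component $\Pic^0_{Y/\Qb} = \Alb(Y)^\wedge$ and satisfies $\phi_L(x) = 0$. By the universal property of the Albanese variety $(\Alb(X), \alb_{X,x})$ there exists a unique homomorphism $\tilde\phi_L: \Alb(X) \to \Alb(Y)^\wedge$ with $\tilde\phi_L \circ \alb_{X,x} = \phi_L$. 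The displayed identification then associates to $\tilde\phi_L$ a well-defined element $M_L \in DC(\Alb(X), \Alb(Y))$, and $L \mapsto M_L$ is the candidate inverse.

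It remains to check that the two maps are mutually inverse. For injectivity of $(\alb_{X,x}, \alb_{Y,y})^\ast$, suppose $M \in DC(\Alb(X), \Alb(Y))$ has trivial pullback: the associated homomorphism $\Alb(X) \to \Alb(Y)^\wedge$ vanishes on $\alb_{X,x}(X)$, and since $\alb_{X,x}(X)$ generates $\Alb(X)$ as an algebraic group the homomorphism is zero, so $M$ is trivial. For the identity $(\alb_{X,x}, \alb_{Y,y})^\ast M_L \simeq L$, by construction both line bundles induce the same classifying morphism $X \to \Alb(Y)^\wedge$, so their tensor difference $N$ is trivial on every fiber $\{x'\}\times Y$; being also trivial on $X \times \{y\}$ (as both bundles lie in $DC(X,Y)$), the seesaw principle forces $N \simeq \cO_{X\times Y}$. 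That $L \mapsto M_L$ is also right-inverse follows from the uniqueness clause in the universal property of the Albanese.

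The main point requiring care is to check that $\phi_L$ is actually an algebraic morphism taking values in the connected component $\Pic^0_{Y/\Qb}$, which uses the representability of the Picard functor together with the connectedness of $X$. Once this is granted the rest of the argument is formal, relying only on the universal property of the Albanese and on the seesaw principle; everything is classical and the references in \cite{Lang59, Raynaud70, VarAbOrsay} suffice.
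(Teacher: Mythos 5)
Your argument is correct, but there is nothing in the paper to compare it with: Proposition \ref{prop:DCAlb} is stated in Section 3.2 as one of the classical facts on divisorial correspondences and is not proved there --- the paper delegates it to \cite{Lang59} (Chapter VI), \cite{Raynaud70}, \cite{VarAbOrsay} and \cite{SGA7I}. Your proof is essentially the standard argument from those sources: interpret $L\in DC(X,Y)$ as a family of algebraically trivial line bundles on $Y$ via the Picard scheme, factor the classifying morphism through $\alb_{X,x}$ by the universal property, and identify $DC(\Alb(X),\Alb(Y))$ with $\Hom_{\Qb-\ggp}(\Alb(X),\Alb(Y)^\wedge)$ (the content of Proposition \ref{prop:DCBiextab}, which is independent of the present statement, so there is no circularity); the seesaw principle and the fact that $\alb_{X,x}(X)$ generates $\Alb(X)$ finish the verification. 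One point you use twice without naming it deserves to be made explicit: that $\alb_{Y,y}^\ast$ induces an isomorphism $\Pic^0_{\Alb(Y)/\Qb}\lrasim\Pic^0_{Y/\Qb}$, which is exactly the Poincar\'e-bundle characterization of the Albanese morphism recalled in the paper. In the injectivity step it is what allows you to pass from triviality of $\bigl((\alb_{X,x},\alb_{Y,y})^\ast M\bigr)_{\mid\{x'\}\times Y}$ to the vanishing of the associated homomorphism at $\alb_{X,x}(x')$, and in the surjectivity step it is what makes the assertion that $L$ and $(\alb_{X,x},\alb_{Y,y})^\ast M_L$ have the same classifying morphism literally true; it is also precisely where the Albanese variety, rather than an arbitrary abelian variety receiving a pointed map from $X$, is needed. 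With that spelled out, the proof is complete.
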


Let $A_1$ and $A_2$ be two abelian varieties over $\Qb.$ 

Recall that a biextension of $(A_1,A_2)$ by $\Gm$ (over $\Qb$) is a $\Gm$-torsor over $A_1 \times A_2$ equipped with two compatible partial group laws. In particular, as a $\Gm$-torsor, it is trivialized over $A_1 \times \{0\}$ and $\{0\} \times A_2$, hence defines an element of $DC(A_1,A_2).$

 In turn, if $L$ is  a line bundle over $A_1 \times A_2$ trivialized over $\{0\} \times A_2$, then, for any $x \in A_1(\Qb),$ the line bundle $L_{\mid \{x\} \times A_2}$ is algebraically equivalent to zero and therefore defines a $\Qb$-point $\alpha_L(x)$ of the dual abelian variety $A_2^\wedge$. Moreover  this construction defines a morphism of $\Qb$-algebraic groups
 $$\alpha_L: A_1 \lra A_2^\wedge.$$
 
 If we switch the roles of $A_1$ and $A_2$ in this discussion, we get the morphism of abelian varieties dual to the previous one:  
 $$\alpha_L^\wedge: A_2 \lra A_1^\wedge.$$

 \begin{proposition}\label{prop:DCBiextab}
For any two abelian varieties $A_1$ and $A_2$ over $\Qb,$ the above constructions define isomorphisms of $\Z$-modules:
 \begin{equation}\label{DCBiextab}  \Biext^1_{\Qb-\ggp}(A_1,A_2;\Gm)  \lrasim DC(A_1, A_2) \lrasim \Hom_{\Qb-\ggp}(A_1,A_2^\wedge) \lrasim \Hom_{\Qb-\ggp}(A_2,A_1^\wedge).  
 \end{equation}
 \end{proposition}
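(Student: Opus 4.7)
The plan is to establish the three isomorphisms in succession: the rightmost will be essentially formal, the middle one captures the moduli interpretation of the dual abelian variety, and the leftmost rests on the theorem of the cube, which I expect to be the main technical obstacle.

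First I would dispatch the rightmost isomorphism $\Hom_{\Qb-\ggp}(A_1, A_2^\wedge) \lrasim \Hom_{\Qb-\ggp}(A_2, A_1^\wedge)$ by sending $\alpha$ to its dual morphism $\alpha^\wedge : A_2^{\wedge\wedge} \to A_1^\wedge$ precomposed with the canonical biduality isomorphism $A_2 \lrasim A_2^{\wedge\wedge}$; this assignment is involutive and hence bijective. One checks that it matches the symmetric construction obtained by exchanging the roles of $A_1$ and $A_2$ in the definition of $\alpha_L$, which reconciles it with the formulation given in the statement.

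Next I would handle the middle isomorphism $DC(A_1, A_2) \lrasim \Hom_{\Qb-\ggp}(A_1, A_2^\wedge)$. For $L \in DC(A_1, A_2)$, the restrictions $L_{\mid \{x\} \times A_2}$ form an algebraic family of line bundles on $A_2$ parameterized by $A_1$; since $A_1$ is connected and the fiber over $0$ is trivial, each lies in $\Pic^0_{A_2/\Qb}$, and the trivialization along $A_1 \times \{0\}$ supplies a canonical rigidification. The representability of the Picard functor then promotes this assignment to a morphism of $\Qb$-schemes $\alpha_L : A_1 \to A_2^\wedge$, whose additivity follows from the theorem of the square. Conversely, pulling back a Poincar\'e bundle $\cP$ on $A_2^\wedge \times A_2$ by $(\alpha, \mathrm{Id}_{A_2})$ produces an element of $DC(A_1, A_2)$, and the two constructions are mutually inverse by the defining universal property of $\cP$.

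Finally I would tackle the leftmost isomorphism, where the main difficulty lies. A biextension of $(A_1,A_2)$ by $\Gm$ is by definition a $\Gm$-torsor $L$ on $A_1 \times A_2$ equipped with two compatible partial group laws; forgetting those laws but retaining the induced trivializations along $A_1 \times \{0\}$ and $\{0\} \times A_2$ produces a divisorial correspondence, which gives the forward map. For its inverse, given $L \in DC(A_1, A_2)$, I must manufacture canonical partial group laws; concretely this amounts to producing canonical isomorphisms
$$m_{A_1}^\ast L \lrasim p_{13}^\ast L \otimes p_{23}^\ast L \quad \text{on } A_1 \times A_1 \times A_2$$
and the analogue in the $A_2$-variable, and then checking the associativity cocycle identities. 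The hard step is the existence and canonicity of these isomorphisms: this is the content of the theorem of the cube applied to $M := m_{A_1}^\ast L \otimes p_{13}^\ast L^{-1} \otimes p_{23}^\ast L^{-1}$ on $A_1 \times A_1 \times A_2$, which is trivial because it is rigidified along each of the three subvarieties obtained by setting a coordinate to zero, and the rigidifications inherited from $L$ pick out a distinguished trivialization. The biextension axioms then reduce to the uniqueness of such rigidified trivializations up to scalar, and compatibility with the isomorphism of Proposition \ref{prop:DCAlb} is immediate from functoriality.
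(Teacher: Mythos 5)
The paper itself gives no proof of this proposition: it is presented in Section 3.2 as a standard fact, with proofs deferred to \cite{Lang59}, \cite{Raynaud70}, \cite{VarAbOrsay} and \cite{SGA7I}, Expos\'es VII and VIII. Your sketch reconstructs essentially the arguments contained in those references --- duality and biduality for the right-hand isomorphism, the universal property of the rigidified Picard functor (Poincar\'e bundle plus seesaw) for the middle one, and the theorem of the cube together with uniqueness of rigidified trivializations for the biextension structure --- so it is correct in substance and consistent with what the paper relies on.

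Two small points deserve tightening. First, the additivity of $\alpha_L$ for a general divisorial correspondence $L$ is not an application of the theorem of the square, which concerns translates of a single line bundle on one abelian variety; it follows either from the rigidity lemma (a morphism of abelian varieties sending $0$ to $0$ is automatically a homomorphism) or, more in the spirit of your last paragraph, from the cube-trivialization of $M=m_{A_1}^\ast L\otimes p_{13}^\ast L^{-1}\otimes p_{23}^\ast L^{-1}$ restricted to $\{(x,y)\}\times A_2$, which exhibits $L_{\mid\{x+y\}\times A_2}\simeq L_{\mid\{x\}\times A_2}\otimes L_{\mid\{y\}\times A_2}$. Second, for the leftmost arrow you verify that every class in $DC(A_1,A_2)$ carries a canonical biextension structure, but to obtain a bijection you should also record the round-trip: starting from a biextension, its own partial group laws are trivializations of $M$ compatible with the rigidifications, hence coincide with the canonical ones by the same uniqueness-up-to-scalar argument (global invertible functions on the complete variety $A_1\times A_1\times A_2$ being constant); this is what makes ``forget the laws'' and ``re-impose them via the cube'' mutually inverse rather than merely well defined.
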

 
 We finally recall the description of the N\'eron-Severi group of an abelian variety in terms of its symmetric biextensions by $\Gm.$
 
 Let $A$ be an abelian variety over $\Qb$, and let
$m, \pr_1, \pr_2: A \times A \lra A$
 denote respectively the addition law and the two projections. According to the theorem of the cube, for any line bundle $L$ over $A,$ the line bundle
 $$\Lambda (L) := m^\ast L \otimes \pr_1^\ast L^\vee \otimes \pr_2^\ast L^\vee \otimes L_0$$
 --- or rather the corresponding $\G_m$ torsor over $A\times A$ --- is equipped with a canonical structure of symmetric biextension  of $(A,A)$ by $\Gm.$ Moreover, according to the theorem of the square, the class of $\Lambda(L)$ in the subgroup $\SymBiext^1_{\Qb-\ggp}(A,A;\Gm)$ of symmetric biextensions in $\Biext^1_{\Qb-\ggp}(A,A;\Gm)$ depends only on the class of $L$ in the N\'eron-Severi group of $A$.
 
  \begin{proposition}\label{prop:SymBiextab}
For any abelian variety $A$ over $\Qb,$ the above construction, together with the isomorphisms in Proposition  \ref{prop:DCBiextab} with $A_1= A_2=A$, define isomorphisms of $\Z$-modules:
 \begin{equation}\label{SymBiextab}  NS(A) \lrasim \SymBiext^1_{\Qb-\ggp}(A, A;\Gm) ) \lrasim \Hom_{\Qb-\ggp}^{\rm sym}(A, A^\wedge) := \{ \phi \in  \Hom_{\Qb-\ggp}(A, A^\wedge) \mid \phi^\wedge = \phi \}.  
 \end{equation}
 \end{proposition}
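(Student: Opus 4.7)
The strategy is to identify the composite map $NS(A) \xrightarrow{\Lambda} \SymBiext^1_{\Qb-\ggp}(A,A;\Gm) \to \Hom^{\rm sym}_{\Qb-\ggp}(A, A^\wedge)$, where the second arrow comes from the isomorphism of Proposition \ref{prop:DCBiextab} applied with $A_1 = A_2 = A$, with the classical Mumford homomorphism $L \mapsto \phi_L := (x \mapsto [t_x^\ast L \otimes L^\vee])$. Once this identification is achieved, the bijectivity in (\ref{SymBiextab}) reduces to two classical facts: the bijectivity of $\Biext^1 \simeq \Hom(A,A^\wedge)$ on the symmetric parts, and the bijectivity of the Mumford map itself.

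First, I would recall that the theorem of the cube endows $\Lambda(L)$ with a canonical structure of biextension of $(A,A)$ by $\Gm$: the rigidification at $L_0$ produces canonical trivializations of $\Lambda(L)$ along $A \times \{0\}$ and $\{0\} \times A$, and these give rise to the two compatible partial group laws (see \cite{SGA7I}, VII.2.9). The manifest invariance of the defining formula under exchange of the two factors of $A \times A$ makes this biextension symmetric. Moreover, by the theorem of the square, $L \in \Pic^0(A)$ implies $m^\ast L \simeq \pr_1^\ast L \otimes \pr_2^\ast L$, so $\Lambda(L)$ is trivial as a biextension, whence $\Lambda$ factors through $NS(A)$.

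Second, I would trace $\Lambda(L)$ through the isomorphism $\Biext^1_{\Qb-\ggp}(A,A;\Gm) \simeq \Hom_{\Qb-\ggp}(A,A^\wedge)$. By construction, this sends a biextension $E$ to the morphism $x \mapsto [E\vert_{\{x\}\times A}]$ in $A^\wedge = \Pic^0_{A/\Qb}$. Restricting $\Lambda(L)$ to $\{x\}\times A$ and absorbing the trivial factors yields $t_x^\ast L \otimes L^\vee$, so the composite recovers exactly $\phi_L$. Symmetry compatibility then proceeds as follows: the swap involution $\tau$ of $A \times A$ acts on $\Biext^1(A,A;\Gm)$, and under the two possible restrictions $E \mapsto E\vert_{\{x\}\times A}$ and $E \mapsto E\vert_{A\times\{y\}}$ it corresponds, via biduality $A \simeq A^{\wedge\wedge}$, to transposition $\phi \mapsto \phi^\wedge$. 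Hence $\tau^\ast E \simeq E$ if and only if the associated $\phi \in \Hom(A,A^\wedge)$ satisfies $\phi^\wedge = \phi$, and the subgroup $\SymBiext^1_{\Qb-\ggp}(A,A;\Gm)$ corresponds exactly to $\Hom^{\rm sym}_{\Qb-\ggp}(A,A^\wedge)$.

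Third, I would conclude by invoking Mumford's theorem (see \cite{VarAbOrsay}) that the map $NS(A) \to \Hom^{\rm sym}_{\Qb-\ggp}(A,A^\wedge)$, $L \mapsto \phi_L$, is an isomorphism: injectivity follows since $\phi_L = 0$ characterizes $L \in \Pic^0(A)$, and surjectivity is obtained by showing that any symmetric morphism $\phi : A \to A^\wedge$ arises from some line bundle, typically by descending along a suitable polarization. Combined with the previous steps, this delivers both isomorphisms in (\ref{SymBiextab}). The main subtlety, and the step requiring care, is the precise matching of the two symmetries --- on biextensions under $\tau^\ast$ and on homomorphisms under $(\cdot)^\wedge$ --- through the biduality isomorphism $A \simeq A^{\wedge\wedge}$; the rest of the argument is a routine assembly of standard facts about abelian varieties.
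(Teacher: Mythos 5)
The paper itself gives no proof of Proposition \ref{prop:SymBiextab}: the opening of Section 3.2 explicitly states that proofs of the facts gathered there are deferred to the literature (Lang, Raynaud, the Orsay seminar, SGA 7 I, notably VII.2.9 and VIII.4). Your proposal is therefore not competing with an argument in the text, but reconstructing the standard one, and it does so correctly. The key steps you isolate --- that the theorem of the cube makes $\Lambda(L)$ a symmetric biextension and the theorem of the square makes $\Lambda$ factor through $NS(A)$; that under the map of Proposition \ref{prop:DCBiextab} the image of $\Lambda(L)$ is the Mumford homomorphism $\phi_L: x\mapsto [t_x^\ast L\otimes L^\vee]$, since restricting $\Lambda(L)$ to $\{x\}\times A$ kills the $\pr_1^\ast L^\vee$ and $L_0$ factors; that the swap involution on biextensions matches $\phi\mapsto\phi^\wedge$ through biduality; and finally Mumford's theorem that $L\mapsto\phi_L$ is an isomorphism $NS(A)\lrasim\Hom^{\rm sym}(A,A^\wedge)$ --- are precisely the ingredients that the cited references (in particular SGA 7 I, Exp.\ VIII \S 4) assemble. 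You are right that the only genuinely delicate point is the compatibility of the two symmetries via $A\simeq A^{\wedge\wedge}$, and your framing of it is accurate. This is a sound reconstruction along the same lines the paper intends; it fills in no gap in the paper (since the paper claims none to fill) but correctly documents what the cited sources prove.
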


 \subsection{Transcendence and de Rham-Betti (co)homology groups of abelian varieties. Application to biextensions}  
 
 In this section, we combine the transcendence results of Section \ref{TransSL} and the relations between N\'eron-Severi groups, divisorial correspondences, and biextensions recalled in Section \ref{DCBiextAb} to derive diverse full faithfulness properties of the de Rham-Betti realization. These results constitute variants and complements of the results in \cite{Bost12}, Sections 5.2-4, that we now briefly recall. 
 
 To any abelian variety $A$ over $\Qb$ is attached its de Rham-Betti cohomology group 
 $$H^1_{\dRB}(A) := H^1_{\dRB}(A, \Z(0)),$$
 and its de Rham-Betti homology group
 $$H_{1,\dRB}(A) := H^1_{\dRB}(A)^\vee,$$
the object in $\CdRB$ dual to $H^1_{\dRB}(A)$. 

We recall that $H_{1,\dRB}(A)$ may be identified with the object $\LiePer E(A)$ of $\CdRB$ defined by the Lie algebra $\Lie E(A)$ of the universal vector extension of $A$ and the subgroup $\Per E(A)_\C$ of $\Lie E(A)_\C$ consisting of the periods of the complex Lie group $E(A)_\C$ (\cf \cite{Bost12}, Section 5.3.3). Moreover, any morphism 
 $$\phi: A \lra B$$
 of abelian varieties over $\Qb$ may be uniquely lifted to a morphism of their universal vector extensions
 $$E(\phi) : E(A) \lra E(B).$$
In turn, $E(\phi)$ defines a morphism $\LiePer E(\phi)$ in $\Hom_{\dRB}(\LiePer E(A), \LiePer E(B))$, which actually coincides with the morphism 
$H_{1, \dRB}(\phi)$ in $\Hom_{\dRB}(H_{1, \dRB}(A), H_{1, \dRB}(B))$ dual to the pullback morphism $H^1_{\dRB}(\phi)$ in
$\Hom_{\dRB}(H^1_{\dRB}(B), H^1_{\dRB}(A)).$ 

In this way, we define functorial maps :
\begin{equation}\label{IsoH1dRB}
\begin{array}{rcccl}
\Hom_{\Qb-\ggp}(A,B) & \lra & \Hom_{\Qb-\ggp}(E(A),E(B)) & \lra & \Hom_{\dRB}(H_{1,\dRB}(A), H_{1,\dRB}(B))   \\
  \phi &  \longmapsto  & E(\phi) & \longmapsto & H_{1,\dRB}(\phi).  
\end{array}
\end{equation}
The first map $\phi \mapsto E(\phi)$ is easily seen to be bijective. Moreover Theorem  \ref{SLmor}, with $G_1= E(A)$ and $G_2 =E(B)$, shows that the second one, which sends $E(\phi)$ to $\LiePer E(\phi) = H_{1,\dRB}(\phi)$, is also bijective (\cf \cite{Bost12}, Theorem 5.3).

Besides, the construction of the de Rham-Betti (co)homology groups is compatible with the duality of abelian varieties. Namely, for any abelian variety $A$ over $\Qb,$ the first Chern class in $H^2_\Gr(A \times A^\wedge, \Z(1))$ of its Poincar\'e line bundle defines an isomorphism in $\CdRB$ (\cf \cite{Bost12}, Section 5.3.3):
\begin{equation}\label{DualdRB}
H_{1,\dRB}(A) \lrasim H^1_{\dRB}(A^\wedge, \Z(1)).
\end{equation}

Let  $A_1$ and $A_2$ be two abelian varieties over $\Qb$. If we compose the isomorphism in Proposition  \ref{prop:DCBiextab}, the fully faithful functor $H_{1,\dRB}$ considered in  (\ref{IsoH1dRB}), and the duality isomorphism (\ref{DualdRB}), we get an isomorphism of $\Z$-modules:
  \begin{multline}\label{preBiextdRB} 
 \Biext^1_{\Qb-\ggp}(A_1,A_2;\Gm) \lra \Hom_{\Qb-\ggp} (A_1, A_2^\wedge) \\ \stackrel{H_{1,\dRB}}{\lra} 
 \Hom_{\dRB}(H_{1,\dRB}(A_1), H_{1,\dRB}(A_2^\wedge)) \\  \lrasim \Hom_{\dRB}(H_{1,\dRB}(A_1), H^1_{\dRB}(A_2, \Z(1))).
 \end{multline}
 Observe that the range of this map may be identified with
 $$\Hom_{\dRB}(H_{1,\dRB}(A_1)\otimes H_{1,\dRB}(A_2), \Z(1))$$
 and also with
 $$\Hom_{\dRB}(\Z(0), H^1_{\dRB}(A_1) \otimes H^1_{\dRB}(A_2, \Z(1))) = :  [H^1_{\dRB}(A_1)\otimes H^1_{\dRB}(A_2)\otimes \Z(1)]_\Gr.$$

We refer the reader to \cite{Deligne74}, Section 10.2, for a discussion of biextension of complex abelian varieties (and more generally, of 1-motives) in the context of Hodge structures, and for diverse equivalent constructions of the map from  $\Biext^1_{\Qb-\ggp}(A_1,A_2;\Gm)$ to 
$$\Hom_{\dRB}(H_{1,\dRB}(A_1)\otimes H_{1,\dRB}(A_2), \Z(1)) \simeq [H^1_{\dRB}(A_1)\otimes H^1_{\dRB}(A_2)\otimes \Z(1)]_\Gr$$
defined by (\ref{preBiextdRB}). We shall content ourselves with the following description of this map. If $L$ denotes the $\Gm$--torsor over $A_1 \times A_2$ defined by some biextension class $\alpha$ of $(A_1,A_2)$ by $\Gm$, its first Chern class in de Rham cohomology $c_{1,\dR}(L)$ defines an element of 
\begin{equation}
\begin{split}
H^2_\dR(A_1 \times A_2/\Qb) & \simeq \wedge^2 H^1_\dR(A_1 \times A_2/\Qb) \simeq  \wedge^2 [H^1_\dR(A_1) \oplus H^1_\dR(A_2/\Qb)] \\
&\simeq \wedge^2 H^1_\dR(A_1/\Qb) \oplus \wedge^2 H^1_\dR(A_2/\Qb) \oplus [H^1_\dR(A_1/\Qb) \otimes H^1_\dR(A_2/\Qb)]
\end{split}
\end{equation}
which actually belongs to the last summand $H^1_\dR(A_1/\Qb) \otimes H^1_\dR(A_2/\Qb).$ 
The map (\ref{preBiextdRB}) sends $\alpha$ to this element
$$B_{A_1,A_2} (\alpha) := c_{1,\dR}(L) \in H^1_\dR(A_1/\Qb) \otimes H^1_\dR(A_2/\Qb).$$

The following theorem summarizes the isomorphisms constructed in the previous paragraphs. They may be seen as counterparts, valid for abelian varieties over $\Qb$ and their de Rham-Betti realizations, of classical facts concerning complex abelian varieties and their Hodge structures (compare for instance the isomorphism   (\ref{BiextdRB}) and \cite{Deligne74}, Construction (10.2.3)).

\begin{theorem}\label{prop:HAB} 1) For any two abelian varieties $A$ and $B$ over $\Qb,$ the map 
 \begin{equation}\label{HAB}
H_{1,\dRB} :  \Hom_{\Qb-\ggp}(A,B) \lrasim \Hom_{\dRB}(H_{1,\dRB}(A), H_{1,\dRB}(B))
\end{equation}
is an isomorphism of $\Z$-modules.

2) For any two abelian varieties $A_1$ and $A_2$ over $\Qb,$ the map 
  \begin{equation}\label{BiextdRB} 
B_{A_1,A_2} :  \Biext^1_{\Qb-\ggp}(A_1,A_2;\Gm) \lrasim [H^1_{\dRB}(A_1)\otimes H^1_{\dRB}(A_2)\otimes \Z(1)]_\Gr
\end{equation}
is an isomorphism of $\Z$-modules.
\end{theorem}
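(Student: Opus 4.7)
The statement is a compilation: the plan is to verify that each arrow in the sequences (\ref{IsoH1dRB}) and (\ref{preBiextdRB}) is an isomorphism, and to identify the resulting composite with the map stated in the theorem.

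For part 1), I will use the factorization (\ref{IsoH1dRB}): first send $\phi$ to its canonical lift $E(\phi)$ between universal vector extensions, then pass to $\LiePer E(\phi) = H_{1,\dRB}(\phi)$. The first arrow is bijective by the universal property of the universal vector extension of an abelian variety. For the second, I will apply Theorem \ref{SLmor} with $G_1 = E(A)$ and $G_2 = E(B)$: it identifies $\Hom_{\Qb-\ggp}(E(A), E(B))$ with the group of $\Qb$-linear maps $\Lie E(A) \to \Lie E(B)$ whose complexification sends $\Per E(A)_\C$ into $\Per E(B)_\C$, which is precisely $\Hom_{\dRB}(H_{1,\dRB}(A), H_{1,\dRB}(B))$ under the identification $H_{1,\dRB}(A) = \LiePer E(A)$. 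The hypothesis of Theorem \ref{SLmor} is satisfied for $G_1 = E(A)$ because $\Per E(A)_\C$ is canonically identified with the Betti homology $H_1(A^{\an}_\C, \Z)$, which is a lattice of full real rank $2\dim A = \dim_\Qb \Lie E(A)$ inside $\Lie E(A)_\C$, hence spans the latter as a $\C$-vector space.

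For part 2), I will compose three already-established isomorphisms: Proposition \ref{prop:DCBiextab} gives $\Biext^1_{\Qb-\ggp}(A_1, A_2; \Gm) \simeq \Hom_{\Qb-\ggp}(A_1, A_2^\wedge)$; part 1), applied to the pair $(A_1, A_2^\wedge)$, gives $\Hom_{\Qb-\ggp}(A_1, A_2^\wedge) \simeq \Hom_{\dRB}(H_{1,\dRB}(A_1), H_{1,\dRB}(A_2^\wedge))$; and the duality isomorphism (\ref{DualdRB}) applied to $A_2$ gives $H_{1,\dRB}(A_2^\wedge) \simeq H^1_{\dRB}(A_2, \Z(1))$. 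The Hom group so obtained is then rewritten, via the rigid tensor structure of $\CdRB$, as $\Hom_{\dRB}(\Z(0), H^1_{\dRB}(A_1) \otimes H^1_{\dRB}(A_2) \otimes \Z(1)) = [H^1_{\dRB}(A_1) \otimes H^1_{\dRB}(A_2) \otimes \Z(1)]_\Gr$, as already observed in the paragraph preceding the theorem.

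The one non-formal ingredient is to check that the composite of these three isomorphisms coincides with the geometric map $B_{A_1, A_2}$: for a $\Gm$-torsor $L$ realizing a biextension class on $A_1 \times A_2$, the composite should send $[L]$ to $c_{1,\dR}(L)$, regarded as an element of the K\"unneth summand $H^1_\dR(A_1/\Qb) \otimes H^1_\dR(A_2/\Qb)$ of $H^2_\dR(A_1 \times A_2/\Qb)$. This I expect to be the main step, and I would handle it by tracing definitions: the duality (\ref{DualdRB}) is itself defined through the first Chern class of the Poincar\'e bundle on $A_2 \times A_2^\wedge$, and the functoriality of Chern classes under the classifying morphism $(\mathrm{id}_{A_1}, \alpha_L): A_1 \times A_2 \to A_1 \times A_2^\wedge$ recovers $c_{1,\dR}(L)$ from the Chern class of the Poincar\'e bundle. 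The principal care required is the bookkeeping of signs and of the $(2\pi i)$-twists; this is the direct de Rham--Betti analogue of the Hodge-theoretic computation in \cite{Deligne74}, Construction (10.2.3).
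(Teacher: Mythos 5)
Your proposal follows the paper's own proof essentially verbatim: part 1 via the factorization through universal vector extensions and Theorem \ref{SLmor} applied to $G_1=E(A)$, $G_2=E(B)$, and part 2 by composing Proposition \ref{prop:DCBiextab}, part 1, and the duality isomorphism (\ref{DualdRB}), then identifying the composite with the Chern-class description $[L]\mapsto c_{1,\dR}(L)$ by pulling back the Poincar\'e bundle along $(\mathrm{id}_{A_1},\alpha_L)$ --- exactly the route the paper takes, with \cite{Deligne74}, 10.2, as the Hodge-theoretic model. One small caution: the spanning hypothesis of Theorem \ref{SLmor} does not follow from a rank count alone (a rank-$2g$ discrete subgroup of a $2g$-dimensional complex vector space need not span it over $\C$); what gives it is the comparison isomorphism built into the identification $H_{1,\dRB}(A)\simeq \LiePer E(A)$, namely $\Per E(A)_\C\otimes_\Z\C\lrasim \Lie E(A)_\C$.
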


 \subsection{The conjecture $GPC^1$ for abelian varieties and for  products of smooth projective varieties}
 
For any abelian variety $A$ over $\Qb$, the isomorphism 
$$B_{A,A} :  \Biext^1_{\Qb-\ggp}(A,A;\Gm) \lrasim [H^1_{\dRB}(A)\otimes H^1_{\dRB}(A)\otimes \Z(1)]_\Gr$$
maps the subgroup $\SymBiext^1_{\Qb-\ggp}(A,A;\Gm)$ of \emph{symmetric} biextensions in $\Biext^1_{\Qb-\ggp}(A,A;\Gm)$ onto the subgroup 
$[H^1_{\dRB}(A)\otimes H^1_{\dRB}(A)\otimes \Z(1)]^{\rm alt}_\Gr$ of skew-symmetric, or \emph{alternating}, elements in $[H^1_{\dRB}(A)\otimes H^1_{\dRB}(A)\otimes \Z(1)]_\Gr$ (see for instance \cite{Bost12}, 5.3.3 and 5.4 for a discussion of the sign issue involved in this identification).  In turn, $[H^1_{\dRB}(A)\otimes H^1_{\dRB}(A)\otimes \Z(1)]^{\rm alt}_\Gr$ may be identified with $H^2_\Gr(A, \Z(1)),$ and the composite isomorphism
$$NS(A) \stackrel{\Lambda}{\lra} \SymBiext^1_{\Qb-\ggp}(A,A;\Gm) \stackrel{B_{A,A}}{\lra} H^2_\Gr(A, \Z(1))$$
with the first Chern class $c^A_{1,\Gr}$, or equivalently with the classical ``Riemann form".
 
We finally recover the main result of \cite{Bost12}, Section 5: 

\begin{corollary}\label{GPCAb} For any abelian variety $A$ over $\Qb,$
the cycle map establishes an isomorphism of $\Z$-modules:
$$c^A_{1,\Gr}: NS(A) \lrasim  H^2_\Gr(A, \Z(1)).$$

In particular, $GPC^1(A)$ holds.
\end{corollary}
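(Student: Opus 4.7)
The plan is to assemble the isomorphism $c^A_{1,\Gr}$ from the pieces already developed, by restricting the isomorphism $B_{A,A}$ of Theorem \ref{prop:HAB}(2) to symmetric biextensions and matching the result with $H^2_\Gr(A,\Z(1))$ via the exterior algebra structure on the cohomology of $A$.

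First, I would apply Theorem \ref{prop:HAB}(2) in the case $A_1=A_2=A$ to obtain the isomorphism
$$B_{A,A}: \Biext^1_{\Qb-\ggp}(A,A;\Gm) \lrasim [H^1_{\dRB}(A)\otimes H^1_{\dRB}(A)\otimes \Z(1)]_\Gr.$$
Next, I would show that this isomorphism carries the subgroup $\SymBiext^1_{\Qb-\ggp}(A,A;\Gm)$ of symmetric biextensions onto the subgroup of alternating elements $[H^1_{\dRB}(A)\otimes H^1_{\dRB}(A)\otimes \Z(1)]^{\rm alt}_\Gr$. This follows from a direct check using the description of $B_{A,A}$ via the first Chern class $c_{1,\dR}(L)$ of the $\Gm$-torsor $L$ associated to the biextension, combined with the standard fact that symmetry of the biextension structure translates, in cohomology, to antisymmetry under the flip on $H^1 \otimes H^1$ (this is exactly the sign matter referenced in \cite{Bost12}, 5.3.3 and 5.4).

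Then I would identify $[H^1_{\dRB}(A)\otimes H^1_{\dRB}(A)\otimes \Z(1)]^{\rm alt}_\Gr$ with $H^2_\Gr(A,\Z(1))$. This uses the fact that for an abelian variety the cup product induces isomorphisms $\wedge^2 H^1_\dR(A/\Qb) \lrasim H^2_\dR(A/\Qb)$ and $\wedge^2 H^1(A^{\an}_\C,\Z) \lrasim H^2(A^{\an}_\C,\Z)$, compatibly with the comparison isomorphism; hence $\wedge^2 H^1_{\dRB}(A) \otimes \Z(1) \simeq H^2_{\dRB}(A,\Z(1))$ in $\CdRB$, and taking $\Hom_{\dRB}(\Z(0),-)$ yields the desired identification.

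Combining these steps with the isomorphism $NS(A) \lrasim \SymBiext^1_{\Qb-\ggp}(A,A;\Gm)$ of Proposition \ref{prop:SymBiextab} gives an isomorphism $NS(A) \lrasim H^2_\Gr(A,\Z(1))$. The final task is to verify that this composite coincides with the cycle class map $c^A_{1,\Gr}$, which amounts to unwinding the definition of $B_{A,A}$: for a line bundle $L$ on $A$, the associated symmetric biextension $\Lambda(L) = m^\ast L \otimes \pr_1^\ast L^\vee \otimes \pr_2^\ast L^\vee \otimes L_0$ has first Chern class $m^\ast c_{1,\dR}(L) - \pr_1^\ast c_{1,\dR}(L) - \pr_2^\ast c_{1,\dR}(L)$, whose K\"unneth component in $H^1_\dR(A/\Qb)\otimes H^1_\dR(A/\Qb)$ recovers $c_{1,\dR}(L)$ up to the standard sign coming from the cup product formula for $m^\ast$ on primitive classes; the same identity holds on the Betti side. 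The main subtlety here, and indeed the only real obstacle, is the bookkeeping of these signs, so that the composite map truly equals $c^A_{1,\Gr}$ rather than its negative. Once the isomorphism is established, $GPC^1(A)$ follows immediately from the surjectivity of $c^A_{1,\Gr}: NS(A)_\Q \to H^2_\Gr(A,\Q(1))$.
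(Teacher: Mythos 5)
Your proposal is correct and follows essentially the same route as the paper: the paragraph immediately preceding the corollary in the paper performs exactly this chain of identifications (restrict $B_{A,A}$ to $\SymBiext^1_{\Qb-\ggp}(A,A;\Gm)$, match with the alternating part $[H^1_{\dRB}(A)\otimes H^1_{\dRB}(A)\otimes \Z(1)]^{\rm alt}_\Gr \simeq H^2_\Gr(A,\Z(1))$, compose with $\Lambda$ from Proposition \ref{prop:SymBiextab}, and recognize the composite as $c^A_{1,\Gr}$), deferring the same sign bookkeeping to \cite{Bost12}, 5.3.3--5.4 as you do.
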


 Finally, we consider two smooth projective varieties  $X$ and $Y$ over $\Qb$, equipped with base points $x\in X(\Qb)$ and $y \in Y(\Qb),$ and their Albanese maps $\alb_{X,x} : X \lra \Alb(X)$ and $\alb_{Y,y}: Y \lra \Alb(Y).$ By pullback, these maps induce isomorphisms in $\CdRB$:
 $$H^1_\dRB(\alb_{X,x}): H^1_\dRB(\Alb(X)) \lrasim H^1_\dRB(X)$$
 and
 $$H^1_\dRB(\alb_{Y,y}): H^1_\dRB(\Alb(Y)) \lrasim H^1_\dRB(Y).$$ 
 
 The K\"unneth decompositions in de Rham and Betti cohomology define an isomorphism in $\CdRB$:
 $$H^2_\dRB (X \times Y)  \lrasim H^2_\dRB (X) \oplus H^2_\dRB (Y) \oplus (H^1_\dRB (X) \otimes H^1_\dRB (Y)),$$
 and consequently an isomorphism of $\Z$-modules:
 $$H^2_\Gr (X \times Y, \Z(1))  \lrasim H^2_\Gr (X,\Z(1)) \oplus H^2_\Gr (Y,\Z(1)) \oplus [H^1_\dRB (X) \otimes H^1_\dRB (Y)\otimes \Z(1)]_\Gr.$$
 
 Moreover the compatibility of the decompositions (\ref{PicXY}) and (\ref{NSXY}) with the K\"unneth decompositions shows that the first Chern class in de Rham-Betti cohomology
 $$c_{1,\Gr}^{X\times Y}: \Pic(X\times Y) \lra H^2_\Gr (X \times Y, \Z(1)),$$
which for any divisor $Z$ in $X\times Y$ maps $[\mathcal O(Z)]$ to $cl^{X\times Y}_\Gr(Z)$, coincides with 
  $c_{1,\Gr}^X$ (resp., with $c_{1,\Gr}^Y$) when restricted to the first (resp., second) summand of the decomposition (\ref{PicXY}) of $\Pic(X\times Y)$, and defines a map
  $$B_{X,Y}: DC(X,Y) \lra [H^1_\dRB (X) \otimes H^1_\dRB (Y)\otimes \Z(1)]_\Gr$$
  by restriction to the third summand. 
  
  The construction of $B_{X,Y}$ is compatible with the Albanese embeddings. Indeed, as one easily checks by unwinding the definitions of the morphisms involved in the above discussion, the following diagram is commutative:
\begin{equation}\label{bigAlb}
 \begin{CD}
\Biext^1_{\Qb-\ggp}(\Alb(X), \Alb(Y);\Gm) @>{\sim}>> DC(X,Y) \\
@V{B_{\Alb(X), \Alb(Y)}}VV                @VV{B_{X,Y}}V \\
[H^1_\dRB (\Alb(X)) \otimes H^1_\dRB (\Alb(Y))\otimes \Z(1)]_\Gr @>{\sim}>>       [H^1_\dRB (X) \otimes H^1_\dRB (Y)\otimes \Z(1)]_\Gr,
\end{CD}
\end{equation}      
  where the upper (resp., lower) horizontal arrow is the isomorphisms deduced from Propositions \ref{prop:DCAlb} and \ref{prop:DCBiextab} (resp., the isomorphism $H^1_\dRB(\alb_{X,x}) \otimes H^1_\dRB(\alb_{Y,y})\otimes \rm{Id}_{\Z(1)}$).
  
 According to Theorem \ref{prop:HAB}, 2),  the left vertical arrow $B_{\Alb(X), \Alb(Y)}$ in (\ref{bigAlb}) is an isomorphism. Together with the previous discussion, this establishes the following:
  
\begin{corollary}\label{GPCproduct} For any two smooth projective varieties $X$ and $Y$ over $\Qb,$
the map 
\begin{equation}\label{DCproduct}
B_{X,Y}: DC(X,Y) \lra [H^1_{\dRB}(X)\otimes H^1_{\dRB}(Y)\otimes \Z(1)]_\Gr
\end{equation}
is an isomorphism of $\Z$-modules,
and consequently
\begin{equation}\label{product}
GPC^1(X) \mbox{ and } GPC^1(Y) \Longleftrightarrow GPC^1(X\times Y).
\end{equation}
\end{corollary}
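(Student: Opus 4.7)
The plan is to deduce both assertions from the commutative diagram (\ref{bigAlb}) together with the Picard and Künneth decompositions already recorded in the excerpt; at that point the hard work has all been absorbed into Theorem \ref{prop:HAB}(2).

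For the first assertion, I would carry out a ``three-out-of-four'' diagram chase in (\ref{bigAlb}). The upper horizontal arrow is an isomorphism by the chain of identifications provided by Propositions \ref{prop:DCAlb} and \ref{prop:DCBiextab}. The lower horizontal arrow is an isomorphism in $\CdRB$ because the Albanese morphisms induce isomorphisms $H^1_\dRB(\alb_{X,x})$ and $H^1_\dRB(\alb_{Y,y})$, and the functor $[-\otimes -\otimes \Z(1)]_\Gr$ preserves isomorphisms. Finally the left vertical arrow $B_{\Alb(X),\Alb(Y)}$ is an isomorphism by Theorem \ref{prop:HAB}(2). Commutativity then forces $B_{X,Y}$ to be an isomorphism of $\Z$-modules, as required.

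For the equivalence (\ref{product}), I would first observe that, for any smooth projective variety $Z$ over $\Qb$, the conjecture $GPC^1(Z)$ is equivalent to the surjectivity of the rational first Chern class
$$c_{1,\Gr}^Z \otimes \Q : \Pic(Z)_\Q \lra H^2_\Gr(Z, \Q(1)),$$
since every codimension-one cycle is a divisor and the cycle class map $\cl^Z_\Gr$ factors through $\Pic(Z)$ via $Z\mapsto \cO_Z(Z)$. Applied to $Z = X \times Y$, the decomposition (\ref{PicXY}) and the Künneth-type decomposition of $H^2_\Gr(X\times Y, \Q(1))$ recalled just before (\ref{bigAlb}) are compatible through $c_{1,\Gr}^{X\times Y}\otimes \Q$, which is thereby identified with the direct sum
$$(c_{1,\Gr}^X \otimes \Q) \oplus (c_{1,\Gr}^Y \otimes \Q) \oplus (B_{X,Y}\otimes \Q).$$
Since the third summand is already an isomorphism by the first assertion, surjectivity of $c_{1,\Gr}^{X\times Y}\otimes \Q$ is equivalent to the joint surjectivity of $c_{1,\Gr}^X \otimes \Q$ and $c_{1,\Gr}^Y \otimes \Q$, that is, to the conjunction of $GPC^1(X)$ and $GPC^1(Y)$.

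The main obstacle has already been handled upstream, namely in Theorem \ref{prop:HAB}(2), whose proof rests on the Schneider--Lang-type transcendence results of Section \ref{TransSL}; the remaining steps are essentially diagram-chasing in (\ref{bigAlb}) and bookkeeping with the Picard and Künneth decompositions, so no genuinely new input is required at this stage.
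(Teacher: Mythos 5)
Your proposal is correct and follows essentially the same route as the paper: the isomorphism $B_{X,Y}$ is obtained by the same chase in the commutative diagram (\ref{bigAlb}), using Propositions \ref{prop:DCAlb} and \ref{prop:DCBiextab} for the horizontal arrows and Theorem \ref{prop:HAB}, 2) for the left vertical one, and the equivalence (\ref{product}) is deduced exactly as in the paper from the compatibility of the decomposition (\ref{PicXY}) with the K\"unneth decomposition of $H^2_\Gr(X\times Y,\Q(1))$, under which $c_{1,\Gr}^{X\times Y}$ splits as $c_{1,\Gr}^X\oplus c_{1,\Gr}^Y\oplus B_{X,Y}$. No gaps to report.
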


As observed above (see \ref{statement}), for trivial reasons, $GPC^1(X)$ holds for any smooth projective curve $X$ over $\Qb$. Consequently, Corollary \ref{GPCproduct} implies the validity of $GPC^1(X)$ for any product $X$ of smooth projective curves over $\Qb$.

\section{Weights in degree 1 and the second cohomology groups of smooth open varieties}

In this section, we study the generalization of the Grothendieck period conjecture $GPC^k$ to smooth non-proper varieties over $\Qb$ , mainly when $k=1$, and we establish the birational invariance of $GPC^1$. 

Specifically, let $X$ be a smooth quasi-projective variety over $\Qb$. According to \cite{Grothendieck66}, we may still consider the algebraic de Rham  cohomology groups of $X$ over $\Qb$,
$$H^i_{\dR}(X/\Qb) := \H^i(X, \Omega^\bullet_{X/\Qb}),$$
and the comparison isomorphisms
(\ref{compdRC}) and (\ref{compBC}) still hold in this quasi-projective setting. Moreover the definitions of cycles classes in de Rham and Betti cohomology also extend, and Proposition  \ref{compdRB} is still valid. 

As a consequence, we may introduce the de Rham-Betti cohomology groups of $X$, $H^k_{\dRB}(X, \Z(j))$ -- as before, defined in terms of the Betti cohomology modulo torsion -- and $H^k_{\dRB}(X, \Q(j))$, as well as the $\Q$-vector spaces $$H^{2k}_{\Gr}(X, \Q(k)) := H^{2k}_{\dR}(X/\Qb) \cap H^{2k}(X^\an_{\C}, \Q(k))$$ and the cycle 
 map
$$\cl^X_{\Gr}: Z^k(X)_{\Q} \lra H^{2k}_{\Gr}(X, \Q(k)).$$
We shall say that $GPC^k(X)$ holds when this map is onto.

Here again, our main technical tool will be a transcendence theorem \emph{\`a la} Schneider-Lang, which will allow us to establish a purity property of classes in $H^2_{\Gr}(X, \Q(1)).$ This result and its proof suggest some conjectural weight properties of the cohomology classes
in $$H^k_\Gr(X,\Q(j)) :=
 H^{k}_{\dR}(X/\Qb) \cap H^{k}(X^\an_{\C}, \Q(j))
$$ 
that we discuss at the end of this section.

\subsection{Transcendence and $H^1$}\label{transH1section}

\begin{theorem}\label{transH1} For any smooth quasi-projective variety $X$ over $\Qb$, we have, inside
 $H^1_{\dR}(X_{\C}/\C) \simeq H^1(X(\C), \C)$: 
$$H^1_{\dR}(X/\Qb) \cap H^1(X_\C^\an, \Q) =\{0\}.$$
In other words:
$$H^1_{\Gr}(X, \Q(0))=\{0\}.$$
\end{theorem}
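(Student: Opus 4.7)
The plan is a three-step reduction: smooth quasi-projective to smooth projective, projective to an abelian variety, and a concluding transcendence argument of Schneider--Lang type.

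First, to reduce to the smooth projective case, I would choose a smooth projective compactification $\oli{X}$ of $X$ with $D := \oli{X} \setminus X$ a simple normal crossings divisor (by resolution of singularities). The logarithmic de Rham and the Betti Gysin residue exact sequences in degree one read
\[
0 \to H^1_\dR(\oli{X}/\Qb) \to H^1_\dR(X/\Qb) \xrightarrow{\Res_\dR} \bigoplus_i H^0(D_i, \cO_{D_i})
\]
and analogously in Betti cohomology with target $\bigoplus_i H^0(D_i^\an, \Q)(-1)$. Under the comparison isomorphism, a de Rham residue $r \in \Qb$ along a component corresponds to $2\pi i \, r$ on the Betti side, so for $\alpha \in H^1_\Gr(X, \Q(0))$ each residue coordinate lies simultaneously in $\Qb$ and in $2\pi i \, \Q$; the transcendence of $\pi$ over $\Qb$ forces it to vanish. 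Hence $\alpha$ comes uniquely from a class in $H^1_\Gr(\oli{X}, \Q(0))$, reducing to the projective case.

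Second, for $X$ smooth projective over $\Qb$, choose $x \in X(\Qb)$ (nonempty since $\Qb$ is algebraically closed). The Albanese morphism $\alb_{X, x} : X \to A := \Alb(X)$ induces compatible isomorphisms on $H^1$ in algebraic de Rham and in Betti cohomology, hence an isomorphism $H^1_\Gr(A, \Q(0)) \lrasim H^1_\Gr(X, \Q(0))$, reducing the theorem to the case $X = A$ an abelian variety.

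For the abelian case, using the identification $H_{1, \dRB}(A) \cong \LiePer E(A)$ recalled in Section~3, a class in $H^1_\Gr(A, \Q(0))$ is equivalent to a morphism $\phi : \LiePer E(A) \to \Q(0)$ in $\CdRBQ$, namely a pair $(\phi_\dR, \phi_B)$ with $\phi_\dR : \Lie E(A) \to \Qb$ a $\Qb$-linear form and $\phi_B : \Per E(A)_\C \to \Q$ a $\Z$-linear form satisfying $\phi_{\dR, \C}|_{\Per E(A)_\C} = \phi_B$. After rescaling $\phi$ by a nonzero rational number, we may assume $\phi_B$ takes values in $\Z$. If $\phi_B = 0$, Theorem~\ref{SLmor} applied with $(G_1, G_2) = (E(A), \Ga_\Qb)$, combined with $\Hom_{\Qb-\ggp}(E(A), \Ga_\Qb) = 0$ (which follows from the universal property of $E(A)$), forces $\phi_\dR = 0$. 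Otherwise, any nonzero $u$ in the rank-$(2g-1)$ sublattice $\ker \phi_B \subset \Per E(A)_\C$ satisfies $u \in \ker \phi_{\dR, \C}$ and $\exp_{E(A), \C}(u) = 0 \in E(A)(\Qb)$; W\"ustholz's analytic subgroup theorem (a natural strengthening of Theorem~\ref{SLmor} to arbitrary $\Qb$-subspaces of $\Lie G$) applied to the codimension-one subspace $\ker \phi_\dR$ produces a nontrivial connected algebraic subgroup $H \subset E(A)$ with $\Lie H \subset \ker \phi_\dR$. Iterating on the quotient $E(A)/H$, for which $\Hom_{\Qb-\ggp}(E(A)/H, \Ga_\Qb) = 0$ still holds, strictly reduces the dimension and eventually forces $\phi = 0$. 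The main difficulty is this last step: Corollary~\ref{corSLmorGa} from Section~3 only directly yields the dual vanishing $H_{1, \Gr}(A, \Q(0)) = 0$, and passing to the corresponding statement for $H^1$ genuinely requires transcendence input beyond the direct formulation of Theorem~\ref{SLmor}.
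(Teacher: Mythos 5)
Your reduction from the smooth quasi-projective case to the projective case via residues and the transcendence of $\pi$ is essentially the same as the paper's, and is fine. In the projective case, however, you take a genuinely different and substantially heavier route. You reduce further to $X=A$ an abelian variety via the Albanese map, then rewrite a class in $H^1_\Gr(A,\Q(0))$ as a morphism $\LiePer E(A)\to\Q(0)$ in $\CdRBQ$; because the target $\Q(0)$ does not arise as $\LiePer$ of any algebraic group (there is no commutative $\Qb$-group with period lattice $\Z$ rather than $0$ or $2\pi i\Z$), Theorem~\ref{SLmor} only handles the case $\phi_\rB=0$, and you correctly diagnose that you must escalate to W\"ustholz's analytic subgroup theorem to handle $\phi_\rB\neq 0$. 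The paper's point is precisely to avoid this escalation. Its trick is to identify the \emph{cohomology} group $H^1_\dR(X/\Qb)$ itself with $\Lie E_{X/\Qb}$, where $E_{X/\Qb}=E(\Pic^0_{X/\Qb})$ is the universal vector extension of the \emph{Picard} variety, under which the comparison isomorphism carries $\Per E_{X/\Qb,\C}$ onto $H^1(X^\an_\C,\Z(1))=2\pi i\,H^1(X^\an_\C,\Z)$. Corollary~\ref{SLmorGm} (Schneider--Lang with source $\Gm$) then gives directly
$$\Hom_{\Qb-\ggp}(\Gm_{,\Qb},E_{X/\Qb})\ \lrasim\ \Lie E_{X/\Qb}\cap\tfrac{1}{2\pi i}\Per E_{X/\Qb,\C}\ =\ H^1_\dR(X/\Qb)\cap H^1(X^\an_\C,\Z),$$
and the left-hand side vanishes because $E_{X/\Qb}$ is an extension of an abelian variety by a vector group, hence receives no nonzero map from $\Gm$. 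By working with $E(\Pic^0)$ instead of the dual of $E(\Alb)$, the $2\pi i$ lands exactly in the place where $\Gm$-periods live, so Theorem~\ref{SLmor} applies on the nose with no dualization and no iteration; the Albanese reduction becomes superfluous. The paper explicitly remarks that the stronger conclusion (nonvanishing integrals are transcendental) does follow from W\"ustholz, and deliberately opts for the weaker Schneider--Lang input; your proof achieves only the weaker conclusion while nonetheless invoking W\"ustholz, and the iterative step you sketch (descent to $E(A)/H$, re-checking that periods span the Lie algebra and that $\Hom(E(A)/H,\Ga)=0$) is left unjustified, although it can be made to work since quotients of anti-affine groups are anti-affine.
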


When $X$ is $\Gm$, this theorem precisely asserts the transcendence of $\pi,$ and is equivalent to Corollary \ref{corSLmorGa} for $G=\Gm.$ 

Actually a considerable strengthening of Theorem \ref{transH1} is known to hold (\cf \cite{BakerWuestholz07}, notably Corollary 6.9): 
\emph{for any cohomology class $\alpha$ in  $H^1_{\dR}(X/\Qb)$} (identified to a subspace of  $H^1(X^{\an}_{\C}, \C)$) \emph{and any $\gamma \in H_{1}(X(\C), \Z)$, the integral $\int_{\gamma} \alpha$ 
 either vanishes, or belongs to $\C\setminus\Qb.$} 
 This follows from the so-called ``analytic subgroup theorem"   of W\"ustholz (\cite{Wuestholz89}) --- a generalized version of Baker's transcendence results on linear forms in logarithms, valid over any commutative algebraic group over $\Qb$ --- combined with the construction of generalized Albanese varieties in \cite{FaltingsWuestholz84}. 
 
 For the sake of completeness, we sketch a proof of Theorem \ref{transH1} based on the less advanced transcendence results,  \emph{\`a la} Schneider-Lang, recalled in Section \ref{TransSL}. 
 
 \begin{proof}[Proof of Theorem \ref{transH1}] 1) Assume first that $X$ is projective. Then 
 $H^1_{\dR}(X/\Qb)$ may be identified with the Lie algebra of the universal vector extension $$E_{X/\Qb} := E(\Pic^0_{X/\Qb})$$ of the Picard variety $\Pic^0_{X/\Qb}$, which classifies algebraically trivial line bundles over $X$. Moreover the canonical isomorphism 
 $$\Lie E_{X/\Qb} \lrasim H^1_{\dR}(X/\Qb)$$
 defines, after extending the scalars from $\Qb$ to $\C$ and composing with the comparison isomorphism (\ref{compdRC}), an isomorphism of complex vector spaces
 $$\Lie E_{X/\Qb, \C} \lrasim  H^1_{\dR}(X/\Qb) \otimes_\Qb \C \lrasim H^1(X_\C^\an, \C)$$
 which maps $\Per E_{X/\Qb, \C}$ to the subgroup $$H^1(X_\C^\an, \Z(1)) = 2 \pi i H^1(X_\C^\an, \Z)$$ of $H^1(X_\C^\an, \C)$
 (see for instance \cite{Messing73}, \cite{MazurMessing74}, and \cite{BK09}, Appendix B).
 
 Therefore, applied to $G = E_{X/\Qb},$ Corollary \ref{SLmorGm} shows that
 \begin{equation}\label{Eint}
 \Hom_{\Qb-\ggp}(\Gm_{,\Qb}, E_{X/\Qb}) \lrasim H^1_{\dR}(X/\Qb) \cap H^1(X_\C^\an, \Z),
\end{equation}
where the intersection is taken in $H^1_{\dR}(X_\C/\C) \simeq H^1(X_\C^\an, \C).$ 

Now the algebraic group $E_{X/\Qb}$ is an extension of an abelian variety by a vector group, and there is no non-zero morphism of algebraic groups from $\Gm_{,\Qb}$ to $E$. Finally (\ref{Eint}) shows that
$$H^1_{\dR}(X/\Qb) \cap H^1(X_\C^\an, \Z) = \{0\},$$ or equivalently
$$H^1_{\dR}(X/\Qb) \cap H^1(X_\C^\an, \Q) = \{0\}.$$

2) In general, we may consider a smooth projective variety $\oli{X}$ over $\Qb$ containing $X$ as an open dense subvariety. Let $(Y_i)_{i\in I}$ be the irreducible components of $\oli{X} \setminus X$ of codimension 1 in $\oli{X}$. The inclusion morphism $i : X \hlra \oli{X}$ and the residue maps along the components $Y_i$ of $\oli{X} \setminus X$ determine a commutative diagram with exact lines (compare with the diagram (\ref{BigGysin}) in the proof of Proposition  \ref{HGrXU}, \emph{infra}):
\begin{equation}\label{1Gysin}
\begin{CD}
0@>>>    H^1_\dR(\oli{X}/\Qb) @>{i^\ast_\dR}>> H^1_\dR(X/\Qb) @>{\Res_\dR}>> \Qb^I \\
@.        @VVV          @VVV                            @VVV   \\
0@>>>    H^1(\oli{X}^\an_\C, \C) @>{i^\ast_\C}>> H^1(X^\an_\C, \C) @>{\Res_\C}>> \C^I  \\
@.  @AAA @AAA  @AAA  \\
0@>>>    H^1(\oli{X}^\an_\C, \Q(1)) @>{i^\ast_\rB}>> H^1(X^\an_\C, \Q(1)) @>{\Res_\rB}>> \Q^I.  \\
\end{CD}
\end{equation}
The vertical arrows in this diagram are injections (defined, in the first two columns, by the comparison isomorphisms (\ref{compdRC}) for $X$ and $\oli{X}$, and the inclusion of $\Q(1)$ into $\C$) that will be written as inclusions in the sequel, and the middle line may be identified with the tensor product with $\C$ over $\Qb$ (resp. over $\Q$) of the first (resp. third) one.

We need to show that any element in the intersection of $H^1_\dR(X/\Qb)$ and $H^1(X^\an_\C, \Q)$ actually vanishes. Let $\alpha$ be such an element in $H^1_\dR(X/\Qb) \cap H^1(X^\an_\C, \Q)$.  Its residue $\Res_\C \alpha$ belongs to $\Qb^I$ (since it is also $\Res_\dR \alpha$) and to $(2\pi i)^{-1} \Q^I$  (since it may also be written  $(2\pi i)^{-1} \Res_\rB(2\pi i\alpha)$). The transcendence of $2\pi i$ now shows that $\Res_\C \alpha$ vanishes, and the exactness of the lines in  (\ref{1Gysin}) that $\alpha$ belongs to (the image by $i_\C^\ast$ of) $H^1_\dR(\oli{X}/\Qb) \cap H^1(\oli{X}^\an_\C, \Q)$. According to the first part of the proof, this intersection vanishes.
\end{proof}

Observe that Part 1) of the proof of Theorem \ref{transH1}, with $\Gm$ replaced  by $\Ga$ and Corollary \ref{SLmorGm} by Corollary \ref{corSLmorGa}, establishes the following:

\begin{theorem}\label{transH1bis} For any smooth projective variety $X$ over $\Qb$, we have, inside
 $H^1_{\dR}(X_{\C}/\C) \simeq H^1(X^\an_\C, \C)$ : 
$$H^1_{\dR}(X/\Qb) \cap  H^1(X^\an_\C, \Q(1)) =\{0\}.$$
In other words, we have:
$$H^1_{\Gr}(X, \Q(1))=\{0\}.$$
\end{theorem}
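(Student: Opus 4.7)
The plan is to mimic Part 1) of the proof of Theorem \ref{transH1}, substituting $\Gm$ by $\Ga$ and correspondingly replacing Corollary \ref{SLmorGm} by Corollary \ref{corSLmorGa}. There is nothing open-ended here: since $X$ is assumed projective, we already have access to the universal vector extension of the Picard variety, which is the right commutative algebraic group to feed into the transcendence machinery.

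First, I would recall the canonical isomorphism
$$H^1_\dR(X/\Qb) \lrasim \Lie E_{X/\Qb},$$
where $E_{X/\Qb} := E(\Pic^0_{X/\Qb})$ denotes the universal vector extension of the Picard variety. Extending scalars to $\C$ and composing with the comparison isomorphism (\ref{compdRC}), this identifies
$$\Lie E_{X/\Qb,\C} \lrasim H^1_\dR(X/\Qb) \otimes_\Qb \C \lrasim H^1(X^\an_\C, \C),$$
and sends the subgroup of periods $\Per E_{X/\Qb,\C}$ onto the subgroup $H^1(X^\an_\C, \Z(1)) = 2\pi i\, H^1(X^\an_\C, \Z)$ (modulo torsion), exactly as in the proof of Theorem \ref{transH1}.

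Next, I would apply Corollary \ref{corSLmorGa} to the connected commutative algebraic group $G = E_{X/\Qb}$ over $\Qb$; this yields
$$\Lie E_{X/\Qb} \cap \Per E_{X/\Qb,\C} = \{0\}.$$
Transported through the isomorphism above, this becomes
$$H^1_\dR(X/\Qb) \cap H^1(X^\an_\C, \Z(1)) = \{0\}$$
inside $H^1(X^\an_\C, \C)$. This is the main substantive step, and it is handled entirely by the Schneider--Lang statement for $\Ga$ recalled as Corollary \ref{corSLmorGa}.

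Finally, to pass from integer to rational coefficients: if $\alpha$ lies in $H^1_\dR(X/\Qb) \cap H^1(X^\an_\C, \Q(1))$, then some positive integer multiple $N\alpha$ lies in $H^1(X^\an_\C, \Z(1))$, while still belonging to the $\Qb$-vector space $H^1_\dR(X/\Qb)$; by the previous step $N\alpha = 0$, hence $\alpha = 0$. This proves the asserted vanishing $H^1_\Gr(X, \Q(1)) = \{0\}$. I do not foresee a real obstacle: once the dictionary between $H^1_\dR$ and $\Lie E_{X/\Qb}$, and between $H^1(X^\an_\C, \Z(1))$ and the period lattice, is in place, the result is immediate from the $\Ga$-version of Schneider--Lang, which rules out precisely the coincidence that a rational cohomology class could be $\Qb$-algebraic in de Rham cohomology without vanishing.
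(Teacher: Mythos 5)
Your proof is correct and is exactly the argument the paper has in mind: the paper merely remarks that Part~1) of the proof of Theorem~\ref{transH1}, with $\Gm$ replaced by $\Ga$ and Corollary~\ref{SLmorGm} by Corollary~\ref{corSLmorGa}, establishes the statement, and you have carried this out faithfully, including the straightforward reduction from $\Q(1)$ to $\Z(1)$ coefficients by clearing denominators.
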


\subsection{Purity of  $H^2_{\Gr}(U, \Q(1))$}\label{purH2}

\begin{proposition}\label{HGrXU} Let $X$ be a smooth projective variety over $\Qb$ and $U$ a dense open subscheme of $X$. Let $i: U \hookrightarrow X$ denote the inclusion morphism,  $(D_{\alpha})_{1\leq \alpha \leq A}$ the irreducible components of $X \setminus U$ of codimension $1$ in $X$, and $([D_{\alpha}])_{1\leq \alpha \leq A}:= (\cl^X_{\Gr}(D_{\alpha}))_{1\leq \alpha \leq A}$ their images in $H^2_{\Gr}(X,\Q(1)).$  

Then the following diagram of $\Q$-vector spaces
\begin{equation}\label{diagZH}
\begin{CD}
0 @>>> \Q^A      @>{(D_{1},\ldots,D_{A})}>>     Z^1(X)_{\Q} @>{i^\ast}>> Z^1(U)_{\Q} @>>> 0 \\
@.        @|          @VV{\cl^X_{\Gr}}V                            @VV{\cl^U_{\Gr}}V   @. \\
@. \Q^A @>{([D_{1}],\ldots,[D_{A}])}>> H^2_{\Gr}(X,\Q(1)) 
@>{i^\ast_{\Gr}}>> H^2_{\Gr}(U,\Q(1)) 
@>>> 0 \\
\end{CD}
\end{equation}
is commutative with exact lines.
\end{proposition}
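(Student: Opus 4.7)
The plan for the top row is elementary: I would first observe that $Z^1(X)_\Q \to Z^1(U)_\Q$ is surjective, since any divisor on $U$ extends to $X$ via Zariski closure, and its kernel consists of $\Q$-linear combinations of divisors on $X$ supported on $X \setminus U$, hence of the codimension-one components $D_\alpha$. Commutativity of both squares is a direct consequence of the functoriality of the cycle class maps $\cl^X_{\dR}$ and $\cl^X_{\rB}$ under the inclusion $i: U \hookrightarrow X$, together with the compatibility expressed in Proposition \ref{compdRB}; the left square merely records $\cl^X_{\Gr}(D_\alpha) = [D_\alpha]$.

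For the bottom row, my strategy is to use the Gysin long exact sequence in $\CdRBQ$. After reducing to the case where $D := X \setminus U$ has simple normal crossings by blowing up centers contained in the singular locus of $D$ (which leaves $U$ unchanged), I obtain an exact sequence of de Rham-Betti realizations
\[
H^0_{\dRB}(\tilde D, \Q(0)) \xrightarrow{\mathrm{Gys}} H^2_{\dRB}(X, \Q(1)) \xrightarrow{i^*} H^2_{\dRB}(U, \Q(1)) \xrightarrow{\mathrm{res}} H^1_{\dRB}(\tilde D, \Q(0)),
\]
where $\tilde D := \bigsqcup_\alpha D_\alpha$ and the Gysin map sends $1_{D_\alpha}$ to $[D_\alpha]$. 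The image $K$ of $\mathrm{Gys}$ is the sub-dRB object of $H^2_{\dRB}(X, \Q(1))$ spanned by the $[D_\alpha]$; being a quotient of $H^0_{\dRB}(\tilde D, \Q(0)) \cong \Q^A \otimes \Q(0)$, it has trivial dRB structure, so $K \cong \Q^B \otimes \Q(0)$ for some $B \leq A$. Given $\alpha \in H^2_{\Gr}(U, \Q(1))$, its residue $\mathrm{res}(\alpha)$ is a Grothendieck class in $H^1_{\dRB}(\tilde D, \Q(0))$ (residue being a $\CdRBQ$-morphism), which vanishes by Theorem \ref{transH1} applied to the smooth projective $\tilde D$. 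Thus $\alpha$ lies in the image of $i^*$ as a dRB morphism, producing Betti and de Rham lifts separately.

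To upgrade these to a single class in $H^2_{\Gr}(X, \Q(1))$, I would exploit the Lefschetz-type symmetric pairing $(\alpha, \beta) \mapsto \int_X \alpha \cup \beta \cup [\omega]^{n-2}$ on $H^2_{\dRB}(X, \Q(1))$ (with $\omega$ a fixed ample divisor class and $n = \dim X$), which is compatible with both the de Rham and Betti structures and, by the Hodge index theorem, restricts to a non-degenerate form on the sub-dRB object $NS(X) \otimes \Q(0)$. This yields a dRB-orthogonal decomposition $H^2_{\dRB}(X, \Q(1)) = (NS(X) \otimes \Q(0)) \oplus (NS(X) \otimes \Q)^{\perp}$; since $K \subset NS(X) \otimes \Q(0)$ and both $K$ and any $\Q$-linear complement of $K$ inside $NS(X) \otimes \Q(0)$ have trivial dRB structure, $K$ splits off as a direct summand of $H^2_{\dRB}(X, \Q(1))$ in $\CdRBQ$. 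Consequently, the short exact sequence $0 \to K \to H^2_{\dRB}(X, \Q(1)) \to \im(i^*) \to 0$ splits, so any Grothendieck class in $\im(i^*)$ lifts to one in $H^2_{\Gr}(X, \Q(1))$, establishing surjectivity of $i^*_{\Gr}$. The kernel of $i^*_{\Gr}$ is then $K \cap H^2_{\Gr}(X, \Q(1)) = K_{\rB}$, the $\Q$-span of the $[D_\alpha]$, which is precisely the image of $\Q^A$. The principal difficulty I anticipate is overcoming the a priori nontrivial $\Ext^1$-obstruction in $\CdRBQ$ (since $\Ext^1_{\dRB}(\Q(0), \Q(0)) = \C/\Qb \neq 0$); this is resolved here by the Hodge index splitting, which uses crucially that $K$ consists of algebraic cycle classes.
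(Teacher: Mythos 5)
Your proof is essentially correct and takes a genuinely different route from the paper's. Both arguments reduce the key surjectivity to the vanishing of residues, which you both obtain from Theorem~\ref{transH1} applied to the (quasi-)projective boundary components; and both must then resolve the ``two separate lifts'' problem, which is where the approaches diverge. You construct a direct-sum complement of $K$ inside $H^2_\dRB(X,\Q(1))$ in the category $\CdRBQ$ itself, using the Lefschetz pairing and the Hodge index theorem to split off $NS(X)\otimes\Q(0)$ and then $K$ as summands, and you correctly identify that the $\Ext^1_\dRB(\Q(0),\Q(0))\simeq\C/\Qb$ obstruction is what must be killed. The paper instead argues more explicitly: given the de Rham lift $\alpha$ and Betti lift $\beta$ of $c$, it considers $\delta=\beta-\alpha\in\ker i^*_\C$ and shows $\delta$ lies in the $\Qb$-span of the $[D_\alpha]$ by testing against a finite family of $1$-cycle classes $[C_i]$ via the trace pairing; the needed non-degeneracy uses only $\mathrm{num}=\mathrm{hom}$ for divisors, which is marginally more elementary than the full Hodge index input in your splitting. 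Both arguments also differ in how they arrange for the Gysin sequence to apply: the paper shrinks $X$ to the open $V=X\setminus(F_{\mathrm{sing}}\cup F^{>1})$, noting $H^i(V)\simeq H^i(X)$ for $i\le 2$, so that $X$ itself is untouched; you instead blow up $X$ to make the boundary SNC. Your reduction is fine in principle but has a small unacknowledged gap: after proving surjectivity for the blown-up $X'$, you must push forward along $\pi\colon X'\to X$ (using $\pi_*\pi^*=\mathrm{id}$ and that $\pi$ is an isomorphism over $U$) to obtain the lift in $H^2_\Gr(X,\Q(1))$ rather than in $H^2_\Gr(X',\Q(1))$; this works because $\pi_*$ is a $\CdRBQ$-morphism, but it should be said.
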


This directly implies:
\begin{corollary}\label{corUX} The $\Q$-linear map $\cl^X_{\Gr}: Z^1(X)_{\Q} \lra H^2_{\Gr}(X,\Q(1))$ is onto iff $\cl^U_{\Gr}:Z^1(U)_{\Q} \lra H^2_{\Gr}(U,\Q(1))$ is onto.
\noindent In other words,
$$GPC^1(X) \Longleftrightarrow GPC^1(U).$$
\end{corollary}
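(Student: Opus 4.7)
The proof is a standard diagram chase using the commutative diagram with exact rows provided by Proposition~\ref{HGrXU}. The plan is to verify both implications separately by lifting classes along the surjections and using exactness on the bottom row.

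For the implication $GPC^1(X) \Rightarrow GPC^1(U)$, I would start with an arbitrary class $\beta \in H^2_{\Gr}(U, \Q(1))$. Since the bottom row is exact at $H^2_{\Gr}(U, \Q(1))$, the map $i^\ast_{\Gr}$ is surjective, so $\beta$ lifts to some $\tilde\beta \in H^2_{\Gr}(X, \Q(1))$. By $GPC^1(X)$, there exists $Z \in Z^1(X)_{\Q}$ with $\cl^X_{\Gr}(Z) = \tilde\beta$, and the commutativity of the right square gives $\cl^U_{\Gr}(i^\ast Z) = i^\ast_{\Gr}(\tilde\beta) = \beta$.

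For the converse $GPC^1(U) \Rightarrow GPC^1(X)$, I would start with $\alpha \in H^2_{\Gr}(X, \Q(1))$ and consider $i^\ast_{\Gr}(\alpha) \in H^2_{\Gr}(U, \Q(1))$. By $GPC^1(U)$, there is some $W \in Z^1(U)_{\Q}$ with $\cl^U_{\Gr}(W) = i^\ast_{\Gr}(\alpha)$. Since the top row is exact (in particular $i^\ast : Z^1(X)_{\Q} \to Z^1(U)_{\Q}$ is surjective by restriction of cycles), lift $W$ to some $\tilde W \in Z^1(X)_{\Q}$. The commutative square then gives
$$i^\ast_{\Gr}\bigl(\alpha - \cl^X_{\Gr}(\tilde W)\bigr) = i^\ast_{\Gr}(\alpha) - \cl^U_{\Gr}(i^\ast \tilde W) = i^\ast_{\Gr}(\alpha) - \cl^U_{\Gr}(W) = 0.$$
By exactness of the bottom row at $H^2_{\Gr}(X, \Q(1))$, the class $\alpha - \cl^X_{\Gr}(\tilde W)$ lies in the image of $\Q^A \to H^2_{\Gr}(X, \Q(1))$, hence equals $\sum_{\alpha=1}^A n_\alpha [D_\alpha] = \cl^X_{\Gr}\bigl(\sum_\alpha n_\alpha D_\alpha\bigr)$ for some $n_\alpha \in \Q$. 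Therefore $\alpha = \cl^X_{\Gr}\bigl(\tilde W + \sum_\alpha n_\alpha D_\alpha\bigr)$ lies in the image of $\cl^X_{\Gr}$.

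There is essentially no obstacle here: all the substantive content sits in Proposition~\ref{HGrXU}, which supplies the exactness of the two rows (in particular, the surjectivity of $i^\ast_{\Gr}$ on the right of the bottom row, which reflects a purity statement for $H^2_{\Gr}$ that rests on the transcendence input of Section~\ref{transH1section}). Once that proposition is granted, the corollary reduces to the two short diagram chases above.
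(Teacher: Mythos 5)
Your proof is correct and is exactly the diagram chase the paper intends: Proposition~\ref{HGrXU} is stated precisely so that Corollary~\ref{corUX} follows by the two lifting arguments you give (the paper simply says the proposition ``directly implies'' the corollary). The only cosmetic remark is the clash of notation between the class $\alpha$ and the summation index $\alpha$ in $\sum_\alpha n_\alpha D_\alpha$, which you should rename.
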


Let us emphasize that the ``non-formal'' part of the proof of Proposition \ref{HGrXU} is the surjectivity of the map 
$$i^\ast: H^2_{\Gr}(X, \Q(1)) \lra H^2_{\Gr}(U, \Q(1)).$$
It shows (and is basically equivalent to the fact) that  $H^2_{\Gr}(U, \Q(1))$ is included in the weight zero part $W_0H^2(U^\an_\C, \Q(1))$ of $H^2(U^\an_\C, \Q(1))$. This purity result will be deduced from the transcendence properties of the $H^1$ recalled in Theorem \ref{transH1}, applied to  components  of codimension $1$ of $X\setminus U$ . 

Corollary  \ref{corUX}  implies the birational invariance of $GPC^1(X)$.
From this result, together with the compatibility of direct images of cycles with trace maps in de Rham and Betti cohomology, one easily derives that, more generally, 
for any two smooth projective varieties $X$ and $Y$ over $\Qb,$
if there exists a dominant rational map $f: X \dasharrow Y,$ then 
$GPC^1(X)$ implies
$GPC^1(Y).$ (Compare \cite{Tate94}, (5.2) Theorem (b).) This is also a special case of our results in Section 5 (\cf Corollary \ref{dominant}).

\begin{proof}[Proof of Proposition \ref{HGrXU}]
 The commutativity of (\ref{diagZH}) and the exactness of its first line are clear. We are left to establish the exactness of its second line. 
 
 Let us consider $F:= X \setminus U$, the union $F^{>1}$ of its irreducible component of codimension strictly bigger than $1$, and the closed subset $F_{\rm sing}$ of non-regular points of $F$. Observe that, since $F_{\rm sing} \cup F^{>1}$ has codimension strictly bigger than $1$ in  $X$, the inclusion $j: V \hlra X$ of the open subscheme $$V := X \setminus (F_{\rm sing} \cup F^{>1})$$ induces compatible isomorphisms between de Rham and Betti cohomology groups, for $i\in\{0, 1, 2\}$:
 \begin{equation}\label{jiso}
 \begin{CD}
H^{i}_\dR(X/\Qb) @>{\sim}>> H^i_{\dR} (V/\Qb)\\
@VVV        @VVV                                     \\
H^{i}(X_\C^\an, \C) @>{\sim}>> H^i(V^\an_\C, \C)  \\
@AAA @AAA  \\
H^{i}(X_\C^\an, \Q(1)) @>{\sim}>> H^i (V^\an_\C, \Q(1)). \\
\end{CD}
 \end{equation}
 
 The open subscheme $U:= X \setminus F$ is contained in $V$. Moreover
 $$D:= V \setminus U = F \setminus (F_{\rm sing} \cup F^{>1}) = \bigcup_{1 \leq \alpha \leq A} D_{\alpha} \setminus (F_{\rm sing} \cup F^{>1})$$
 is a closed smooth divisor in $V$, with irreducible components
 $${\overset{\circ}{D}}_{\alpha} := D_{\alpha}\setminus (F_{\rm sing} \cup F^{>1}), \qquad 1 \leq \alpha \leq A.$$
 The inclusions $D \hlra V$ and $D_{\C}\hlra V_{\C}$ define compatible Gysin isomorphisms with value in the cohomology with support:
\begin{equation}\label{smallGysin}\begin{CD}
H^{i-2}_\dR(D/\Qb) @>{\sim}>> H^i_{\dR,D} (V/\Qb)\\
@VVV        @VVV                                     \\
H^{i-2}(D_\C^\an,\C) @>{\sim}>> H^i_{D^\an_\C} (V^\an_\C, \C)  \\
@AAA @AAA  \\
H^{i-2}(D_\C^\an,\Q) @>{\sim}>> H^i_{D^\an_\C} (V^\an_\C, \Q(1)). \\
\end{CD}
\end{equation}
Therefore the long exact sequences of cohomology groups, relating the cohomology of $V$ with support in $D$, the cohomology of $V$ and the cohomology of $U =V \setminus D$, may be interpreted as a ``Gysin exact sequences'' which, combined with the isomorphisms (\ref{jiso}),   fits into a commutative diagram with exact lines: 

\begin{equation}\label{BigGysin}
\begin{CD}
H^0_\dR(D/\Qb) @= \Qb^A      @>{\gamma_\dR}>>    H^2_\dR(X/\Qb) @>{i^\ast_\dR}>> H^2_\dR(U/\Qb) @>{\Res_\dR}>> H^1_\dR(D/\Qb) \\
@.        @VVV          @VVV                            @VVV   @VVV\\
H^0(D^\an_\C, \C) @= \C^A      @>{\gamma_\C}>>    H^2(X^\an_\C, \C) @>{i^\ast_\C}>> H^2(U^\an_\C, \C) @>{\Res_\C}>> H^1(D^\an_\C, \C)  \\
@.  @AAA @AAA  @AAA @AAA \\
H^0(D^\an_\C, \Q) @= \Q^A      @>{\gamma_\rB}>>    H^2(X^\an_\C, \Q(1)) @>{i^\ast_\rB}>> H^2(U^\an_\C, \Q(1)) @>{\Res_\rB}>> H^1(D^\an_\C, \Q).  \\
\end{CD}
\end{equation}

In (\ref{jiso}), (\ref{smallGysin}) and (\ref{BigGysin}), the vertical arrows are injections, that we shall write as inclusions in the sequel. The middle line may be identified with the tensor product with $\C$ over $\Qb$ (resp. over $\Q$) of the first (resp. third) one. By definition, the map $\gamma_{\C}$ (resp. $\gamma_{\dR}$, $\gamma_{\rB}$) maps any $A$-tuple $(\lambda_{\alpha})_{1\leq \alpha \leq A}$ in $\C^A$ (resp. in $\Qb^A$, $\Q^A$) to $\sum_{1\leq \alpha \leq A} \lambda_{\alpha} [D_{\alpha}]$.

Recall also that, for $\Q$-divisors on the smooth projective variety $X$, homological and numerical equivalence coincide (see for instance \cite{FultonIT}, 19.3), and that, if $d:= \dim X,$ we have compatible isomorphisms of one-dimensional vector spaces: 
$$\begin{CD}
H^{2d}(X/\Qb) @>{\sim}>{\Tr_{\dR}}> \Qb \\
@VVV        @VVV \\
H^{2d}(X^\an_\C, \C) @>{\sim}>{\Tr_\C}> \C\\ 
@AAA     @AAA\\ 
H^{2d}(X^\an_\C, \Q(d)) @>{\sim}>{\Tr_{\rB}}> \Q. \\
\end{CD}
$$

  Consequently, if $D$ denotes the dimension of the $\Q$-vector space $\im \gamma_{\rB}$, we may choose a $B$-tuple $(C_1,\ldots, C_D)$ of elements of $Z_1(X)$ such that the map 
  $$
\begin{array}{rrcl}
\psi :&H^{2}(X^\an_\C, \C)  & \lra  & \C^D   \\
 & c & \longmapsto   &(\Tr_\C (c.[C_i]))_{1\leq i \leq D}   
\end{array}
$$
--- where $[C_i]$ denotes the cycle class of $C_i$ in $H^{2d-2}_{\Gr}(X, \Q(d-1))$ --- defines by restriction isomorphisms of $\Q$-, $\Qb$-, and $\C$-vector spaces:
$$\psi_{\rB} : \mathrm{Im} \gamma_{\rB} \lrasim \Q^D,$$
$$\psi_{\dR} : \mathrm{Im} \gamma_{\dR} \lrasim \Qb^D,$$
and $$\psi_{\C} : \mathrm{Im} \gamma_{\C} \lrasim \C^D.$$

Consider a class $c$ in $H^2_{\Gr}(U, \Q(1))$. Its image under $\Res_\C$ belongs to $\mathrm{Im} \Res_{\dR} \cap \mathrm{Im} \Res_{\rB},$ hence to the subspace 
$$H^1_{\dR}(D/\Qb) \cap H^1(D^\an_\C,\Q) = \bigoplus_{1\leq \alpha \leq A} H^1_{\dR}(\overset{\circ}{D}_\alpha/\Qb) \cap H^1(\overset{\circ}{D}^\an_{\alpha,\C},\Q)$$
of 
$$H^1(D^\an_\C,\C) = \bigoplus_{1\leq \alpha \leq A} H^1(\overset{\circ}{D}^{\an}_{\alpha,\C},\C).$$

According to Theorem \ref{transH1}, this intersection vanishes, and therefore we may find $\alpha \in H^2_\dR (X/\Qb)$ and $\beta \in H^2(X^\an_\C, \Q(1))$ such that
$$c = i^\ast_{\dR}\alpha = i^\ast_{\rB}\beta.$$

The class $\delta := \beta - \alpha$ in  $H^2(X^\an_\C, \C)$ satisfies 
$i_\C^\ast \delta = 0,$ hence belongs to $\mathrm{Im} \gamma_\C$. Moreover 
$\psi_\C (\delta) = \psi_{\rB}(\beta) - \psi_{\dR}(\alpha)$ belongs to $\Qb^D$. Consequently $\delta$ belongs to $\mathrm{Im} \gamma_{\dR}$, and finally $\beta = \alpha + \delta$ belongs to $H^2_\Gr (X, \Q(1))$ and is mapped to $c$ by $i^\ast_\Gr.$

This establishes the surjectivity of $i^\ast_\Gr$ in the second line of (\ref{diagZH}). Its exactness then follows 
 from the exactness at $H^2(X^\an_\C, \Q(1))$ of the last line of (\ref{BigGysin}).
\end{proof}

\subsection{Periods and the weight filtration}\label{wfiltration}

The reader will have noticed that the arguments of the preceding sections essentially reduce to reasoning on weights. We briefly discuss what relationship one might expect between the weight filtration and Grothendieck classes.

Let $X$ be a smooth quasi-projective variety over $\Qb$. As shown in \cite{Deligne71} -- see also \cite{Deligne75} -- both the algebraic de Rham cohomology groups of $X$ and its Betti cohomology groups with rational coefficients are endowed with a canonical \emph{weight filtration} $W_\bullet$. This is an increasing filtration on cohomology. The group $W_n H^k(X^\an_\C, \Q(j))$ is the subspace of weight at most $n$ in $H^k(X^\an_\C, \Q(j))$, and the group $\mathrm{Gr}^n_{W_\bullet} H^k(X^\an_\C, \Q(j))$ is the ``part" of weight $n$. The weight filtration is functorial and compatible with products. 

If the smooth variety $X$ is projective, the group $H^k(X^\an_\C, \Q(j))$ is of pure weight $k-2j$, meaning that $\mathrm{Gr}^n_{W_\bullet} H^k(X^\an_\C, \Q(j))$ vanishes unless $n=k-2j$. In general, the weights of $H^k(X^\an_\C, \Q(j))$ all lie between $k-2j$ and $2k-2j$ as proved in \cite{Deligne71}, meaning that $W_{k-2j-1}H^k(X^\an_\C, \Q(j))=0$ and $W_{2k-2j}H^k(X^\an_\C, \Q(j))=H^k(X^\an_\C, \Q(j))$.

The results above hold with Betti cohomology replaced by de Rham cohomology, and the weight filtration is compatible with the comparison isomorphism between de Rham and Betti cohomology ; see \cite{Jannsen90}, chapter 3. As a consequence, the de Rham-Betti cohomology groups of $X$ are endowed with a weight filtration $W_{\bullet}$ as well, that is sent to the weight filtration above on both the de Rham and the Betti realization. The graded objects $\mathrm{Gr}^n_{W_\bullet} H^k_\dRB(X, \Q(j))$ are also objects of the category $\CdRBQ$.

\bigskip

Along the lines of \ref{conjectures}, it might be sensible to formulate the following conjecture.

\begin{conjecture}\label{weights}
Grothendieck classes on smooth quasi-projective varieties live in weight zero. 

In other words, let $X$ be a smooth quasi-projective variety over $\Qb$, and let $j, k$ be two integers. Then 
\begin{enumerate}
\item $W_{-1} H^k_\dRB(X, \Q(j))_\Gr=0$.
\item The natural injection 
$$W_{0} H^k_\dRB(X, \Q(j))_\Gr\hookrightarrow H^k_\dRB(X, \Q(j))_\Gr$$
is an isomorphism.
\end{enumerate}
\end{conjecture}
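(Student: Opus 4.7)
The plan is to attack Conjecture \ref{weights} via Deligne's weight spectral sequence, reducing it to a pure statement about smooth projective varieties that can then be attacked by the transcendence methods of Section \ref{TransSL}. By Hironaka, I would choose an open immersion $X \hookrightarrow \overline{X}$ with $\overline{X}$ smooth projective and $D := \overline{X} \setminus X$ a simple normal crossings divisor; write $D^{[p]}$ for the disjoint union of $p$-fold intersections of the irreducible components of $D$ (with $D^{[0]} = \overline{X}$). Deligne's weight spectral sequence is compatible with the comparison isomorphism (\cite{Jannsen90}) and hence lives in $\CdRBQ$, so each graded piece $\mathrm{Gr}_W^n H^k_\dRB(X, \Q(j))$ is a subquotient in $\CdRBQ$ of the pure de Rham--Betti object $\bigoplus_p H^{k-p}_\dRB(D^{[p]}, \Q(j-p))$. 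Combining this with the elementary observation that the weight filtration on $M_\Gr := \Hom_{\dRB,\Q}(\Q(0), M)$ defined by $W_n M_\Gr := W_n M \cap M_\Gr$ satisfies $W_n M_\Gr / W_{n-1} M_\Gr \hookrightarrow (\mathrm{Gr}_W^n M)_\Gr$ by left exactness, Conjecture \ref{weights} reduces to showing that $(\mathrm{Gr}_W^n H^k_\dRB(X, \Q(j)))_\Gr = 0$ for every $n \neq 0$.

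This vanishing is in turn controlled by the following \emph{pure} statement: for every smooth projective $Y/\Qb$ and every pair of integers $m, \ell$ with $m \neq 2\ell$, one has $H^m_\Gr(Y, \Q(\ell)) = 0$. The range $m > 2\ell$ reduces to $m < 2\ell$ via the hard Lefschetz isomorphism and Poincar\'e duality, both of which respect de Rham--Betti structures. For $m = 1$ the remaining range $m < 2\ell$ is precisely Theorem \ref{transH1bis} together with Theorem \ref{transH1}, whose proofs realize $H^1_\dR(Y/\Qb)$ as the Lie algebra of the universal vector extension $E_{Y/\Qb}$ of $\Pic^0(Y)$ and then invoke Corollary \ref{SLmorGm} (resp. Corollary \ref{corSLmorGa}) to rule out nontrivial homomorphisms $\Gm_\Qb \to E_{Y/\Qb}$ (resp. $\Ga_\Qb \to E_{Y/\Qb}$). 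Proposition \ref{HGrXU} illustrates how, once the pure $H^1$ statement is in hand, one can handle low-degree non-pure cases like $(k,j) = (2,1)$ by reducing through the Gysin sequence to $H^1$ of the codimension-one strata of the boundary.

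I expect two genuinely hard obstacles. The first is that the reduction from "$\mathrm{Gr}_W^n$ a subquotient of a pure piece with vanishing $\Gr$" to "$(\mathrm{Gr}_W^n)_\Gr = 0$" is not formal: $\Ext^1_\CdRBQ$ between pure objects of the same weight can be nonzero -- already $\Ext^1_\CdRBQ(\Q(0), \Q(0)) \ne 0$, essentially because $\C/(\Qb + \Q) \ne 0$ -- so vanishing of Grothendieck classes on an $E_1$-term does not automatically descend to its subquotients. One needs an additional "strictness" or semisimplicity property of the weight filtration on Grothendieck classes, which is itself conjectural and closely related to the full faithfulness discussion of Section \ref{conjectures}. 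The second, and more fundamental, obstacle is the pure case $m < 2\ell$ with $m \ge 2$: there is currently no analog of the universal vector extension realizing $H^m_\dR(Y/\Qb)$ as the Lie algebra of a commutative algebraic group over $\Qb$, so the Schneider--Lang machinery of Section \ref{TransSL} does not directly apply and the conjecture in higher degree appears to require a substantially new transcendence input. In the absence of such an input, I would therefore aim only at unconditional partial results in small $k$ -- which is exactly the scope of Theorems \ref{transH1}, \ref{transH1bis} and Proposition \ref{HGrXU} -- and leave the conjecture itself stated as such.
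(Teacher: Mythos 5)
The statement you were asked about is Conjecture \ref{weights}; the paper does not prove it and explicitly regards the general case as widely open, establishing only the cases $(j,k)=(0,1),(1,1),(1,2)$ via Theorems \ref{transH1}, \ref{transH1bis} and Proposition \ref{HGrXU}. Your proposal correctly treats the statement as conjectural, and your analysis is essentially the paper's own: the weight spectral sequence reduces the mixed statement to the pure vanishing $H^m_\Gr(Y,\Q(\ell))=0$ for $m\neq 2\ell$, which is precisely the conjecture formulated at the end of Section \ref{conjectures}; your observation that vanishing of Grothendieck classes does not automatically pass to subquotients of the $E_1$-term (because $\CdRBQ$ has nontrivial extensions between objects of the same weight) is a genuine obstruction, and it is consistent with the fact that Proposition \ref{HGrXU} needs a non-formal input (the trace pairing against curve classes) to lift Grothendieck classes from $U$ to $X$; and your conclusion that the pure case in degree $\geq 2$ requires transcendence input beyond the Schneider--Lang machinery, so that only the small-degree cases are unconditionally accessible, matches the scope of what the paper actually proves.

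One caveat: your claim that the pure range $m>2\ell$ reduces to $m<2\ell$ via hard Lefschetz and Poincar\'e duality is unjustified. Hard Lefschetz is an isomorphism of de Rham--Betti objects of the \emph{same} weight, and Poincar\'e duality replaces $M$ by $M^\vee$; but the functor $M\mapsto M_\Gr=\Hom_{\dRB,\Q}(\Q(0),M)$ is not compatible with duality, since $(M^\vee)_\Gr=\Hom_{\dRB,\Q}(M,\Q(0))$ bears no general relation to $M_\Gr$. So the positive-weight and negative-weight vanishing statements are independent, as one can already see in the paper: the two weight-$\pm 1$ statements $H^1_\Gr(X,\Q(0))=0$ (Theorem \ref{transH1}) and $H^1_\Gr(X,\Q(1))=0$ (Theorem \ref{transH1bis}) are proved by separate arguments, using morphisms from $\Gm$ and from $\Ga$ respectively, rather than being deduced from one another by any duality.
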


The results of sections \ref{transH1section} and \ref{purH2} may be rephrased as partial results towards Conjecture \ref{weights}.

\begin{theorem}
Conjecture \ref{weights} holds if $X$ is a smooth quasi-projective variety and $(j, k)$ is equal to 
$(0, 1), (1,1)$ or $(1,2)$.
\end{theorem}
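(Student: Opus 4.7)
The plan rests on matching each case $(j,k)$ with either a transcendence vanishing already established or the surjectivity result of Proposition \ref{HGrXU}, together with Deligne's description of $W_0$ in terms of smooth projective compactifications. Recall from \cite{Deligne71} that for any smooth quasi-projective $X$ over $\Qb$, the weights of $H^k(X^\an_\C,\Q(j))$ lie in $[k-2j,\,2k-2j]$. This simple bound will make assertion (1) or (2) of Conjecture \ref{weights} automatic in each case.

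\emph{Case $(j,k)=(0,1)$.} The weights of $H^1(X^\an_\C,\Q(0))$ lie in $[1,2]$, so $W_0H^1(X^\an_\C,\Q(0))=0$. Assertion (1) is vacuous and assertion (2) reduces to the claim $H^1_\Gr(X,\Q(0))=0$, which is precisely Theorem \ref{transH1}.

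\emph{Case $(j,k)=(1,1)$.} Here the weights of $H^1(X,\Q(1))$ lie in $[-1,0]$, so $W_0H^1(X,\Q(1))=H^1(X,\Q(1))$ and assertion (2) is automatic. For assertion (1), fix a smooth projective compactification $\Bb$ of $X$, which exists by Hironaka. Mixed Hodge theory identifies $W_{-1}H^1(X,\Q(1))$ with the image of the (injective) pullback $H^1(\Bb^\an_\C,\Q(1))\to H^1(X^\an_\C,\Q(1))$, and similarly for de Rham. Hence $W_{-1}H^1_\dRB(X,\Q(1))\simeq H^1_\dRB(\Bb,\Q(1))$ as objects of $\CdRBQ$, and consequently $W_{-1}H^1_\dRB(X,\Q(1))_\Gr\simeq H^1_\Gr(\Bb,\Q(1))$. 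This last group vanishes by Theorem \ref{transH1bis}.

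\emph{Case $(j,k)=(1,2)$.} The weights of $H^2(X,\Q(1))$ lie in $[0,2]$, so assertion (1) is automatic. For assertion (2), again choose a smooth projective compactification $\Bb$ of $X$, so that $W_0H^2(X,\Q(1))$ coincides with the image of $i^\ast:H^2(\Bb,\Q(1))\to H^2(X,\Q(1))$ (in both realizations, compatibly with the comparison isomorphism). The content of (2) is then that the map
\[
i^\ast_\Gr:H^2_\Gr(\Bb,\Q(1))\lra H^2_\Gr(X,\Q(1))
\]
is surjective. This is exactly the non-formal content of Proposition \ref{HGrXU} (applied to the pair $U=X\subset\Bb$); no new argument is needed.

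The main obstacle has already been surmounted elsewhere: each case hinges on a Schneider--Lang input (Theorems \ref{transH1} and \ref{transH1bis}), and the remaining ingredient is the standard identification $W_0=\im(H^\bullet(\Bb)\to H^\bullet(X))$ from mixed Hodge theory, compatible with the de Rham--Betti structure. In particular the theorem does not require any fundamentally new transcendence estimate beyond those already used.
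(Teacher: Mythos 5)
Your proposal is correct and follows essentially the same route as the paper: Deligne's weight bounds make one of the two assertions automatic in each case, and the remaining assertion is supplied by Theorem \ref{transH1}, Theorem \ref{transH1bis} and Proposition \ref{HGrXU} respectively, which is precisely the paper's one-line proof. Your explicit identification of $W_{-1}H^1_\dRB(X,\Q(1))$ and $W_0H^2_\dRB(X,\Q(1))$ with (the image of) the cohomology of a smooth projective compactification, compatibly with the de Rham--Betti structure, is exactly the routine verification that the paper's terse citation presupposes, so no gap remains.
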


\begin{proof}
The statement is exactly what is proved in Theorem \ref{transH1}, Theorem \ref{transH1bis} and Proposition \ref{HGrXU} respectively.
\end{proof}

\section{Absolute Hodge classes and the Grothendieck period conjecture}

In this section, we explain how some well-known results regarding absolute Hodge cycles and the conjectures of Hodge and Tate may be transposed into the setting of the Grothendieck period conjecture.

\subsection{Absolute Hodge classes}

The natural cohomological setting that relates the Hodge conjecture and the Grothendieck period conjecture is the one of absolute Hodge classes, as introduced by Deligne in \cite{DeligneMilneOgusShih}. While it is not strictly necessary to introduce absolute Hodge classes to prove the results in this section, as one can rely on André's motivated classes only, consider it  worthwhile to compare the definition of Grothendieck classes to that of absolute Hodge classes. 
We refer to \cite{DeligneMilneOgusShih} and the survey \cite{CharlesSchnell11} for details on absolute Hodge classes.

\bigskip

Let $X$ be a smooth projective variety over an algebraically closed field $K$ of finite transcendence degree over $\Q$. If $\sigma$ is an embedding of $K$ into $\C$, let $\sigma X$ be the complex variety deduced from $X$ by  the base field extension $\sigma:K \ra \C$.

\begin{definition}
Let $\alpha$ be a cohomology class in $\HdR^{2k}(X/K)$.
\begin{enumerate}
\item Let $\sigma$ be an embedding of $K$ into $\C$. We say that $\alpha$ is \emph{rational relative to $\sigma$} if the image of $\alpha$ in $\HdR^{2k}(\sigma X/\C)$ belongs to the image of the Betti cohomology group $H^{2k}(\sigma X^\an, \Q(k))$ under the comparison isomorphism (\ref{compdRC}).
\item The class $\alpha$ is a \emph{Hodge class relative to $\sigma$} if it is rational relative to $\sigma$ and $\alpha$ belongs to $F^k\HdR^{2k}(X/K)$, where $F^{\bullet}$ is the Hodge filtration.
\item The class $\alpha$ is an \emph{absolute rational class} if $\alpha$ is rational relative to all embeddings of $K$ into $\C$.
\item The class $\alpha$ is an \emph{absolute Hodge class} if it is an absolute rational class and belongs to $F^k\HdR^{2k}(X/K)$.
\item Given an embedding $\sigma$ of $K$ into $\C$, we say that a class $\beta$ in $H^{2k}(\sigma X^\an, \Q(k))$ is \emph{absolute rational} (resp. \emph{absolute Hodge}) if its image under the comparison isomorphism (\ref{compdRC}) is absolute rational (resp. absolute Hodge).
\end{enumerate}
\end{definition}

Observe that, in the preceding definition, when $K$ is the field $\Qb$ and $\sigma$ is the inclusion of $\Qb$ in $\C$, the set of classes in $\HdR^{2k}(X/K)$ rational relative to $\sigma$ is the group $H^{2k}_{\Gr}(X, \Q(k))$. 

\begin{proposition}\label{propAlgHodgeGr}
Let $X$ be a smooth projective variety over $\Qb$, $k$ a non-negative integer, $K$ an algebraically closed field of finite transcendence degree over $\Q$ containing $\Qb$. Let $\alpha$ be a class in $\HdR^{2k}(X_K/K)$. Then each of the following conditions imply the following.
\begin{enumerate}
\item The class $\alpha$ is algebraic.
\item The class $\alpha$ is motivated \emph{(in the sense of Andr\'e \cite{Andre96b}).}
\item The class $\alpha$ is an absolute Hodge class.
\item The class $\alpha$ is an absolute rational class.
\item The class $\alpha$ lies in $H^{2k}_{\Gr}(X, \Q(k))$.
\end{enumerate}
\end{proposition}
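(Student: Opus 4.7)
The proposition lists four implications. The first three are formal or cite standard results, and the substance lies in (4) $\Rightarrow$ (5).

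For (1) $\Rightarrow$ (2): by definition, algebraic cycle classes are motivated in the sense of Andr\'e \cite{Andre96b}. The implication (2) $\Rightarrow$ (3) is the main theorem of \cite{Andre96b}, asserting that motivated classes are absolute Hodge and extending Deligne's original result for abelian varieties. The implication (3) $\Rightarrow$ (4) is tautological, since an absolute Hodge class is by definition an absolute rational class additionally lying in $F^k H^{2k}_\dR(X_K/K)$.

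The content is in (4) $\Rightarrow$ (5). Set $V := H^{2k}_\dR(X/\Qb)$. By flat base change, $H^{2k}_\dR(X_K/K) = V \otimes_\Qb K$, so we may expand
\[
\alpha = \sum_{s=1}^{n} g_s \otimes k_s
\]
with $g_s \in V$ chosen $\Qb$-linearly independent and $k_s \in K$. For every embedding $\sigma : K \hookrightarrow \C$ extending the given inclusion $\Qb \hookrightarrow \C$, the image of $\alpha$ in $H^{2k}_\dR(X_\C/\C) = V \otimes_\Qb \C$ is $\sum_s \sigma(k_s)\, g_s$. Fix a $\Q$-basis $(e_j)_{1 \leq j \leq m}$ of $H^{2k}(X_\C^\an, \Q(k))$, which is simultaneously a $\C$-basis of $V \otimes_\Qb \C$, and write $g_s = \sum_j \mu_{sj} e_j$ with $\mu_{sj} \in \C$. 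Since the $g_s$'s are $\C$-linearly independent in $V \otimes_\Qb \C$, the matrix $M := (\mu_{sj})$ defines an injective $\C$-linear map $\C^n \hookrightarrow \C^m$. Absolute rationality of $\alpha$ asserts that for every such $\sigma$ the image $M(\sigma(k_1), \ldots, \sigma(k_n))$ lies in $\Q^m$; injectivity of $M$ then places the set
\[
S := \{(\sigma(k_s))_{s} \mid \sigma\} \subset \C^n
\]
inside the countable $\Q$-subspace $M^{-1}(\Q^m) \subset \C^n$.

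On the other hand, if the transcendence degree $d$ of $\Qb(k_1, \ldots, k_n)$ over $\Qb$ were $\geq 1$, one could pick a transcendence basis -- say $k_1, \ldots, k_d$ -- and, for each of the $2^{\aleph_0}$ tuples $(c_1, \ldots, c_d) \in \C^d$ algebraically independent over $\Qb$, send $k_i \mapsto c_i$ and extend to an embedding of $\Qb[k_1, \ldots, k_n]$ into $\C$, and then further to an embedding $K \hookrightarrow \C$ restricting to $\Qb \hookrightarrow \C$ (using that $K$ has finite transcendence degree over $\Qb$ while $\C$ has transcendence degree $2^{\aleph_0}$). Distinct tuples would yield distinct elements of $S$, so $|S| \geq 2^{\aleph_0}$, contradicting countability. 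Hence $d = 0$, all $k_s$ lie in $\Qb$, and $\alpha$ descends to $V = H^{2k}_\dR(X/\Qb)$; applying any single $\sigma$ further shows that $\alpha$ is rational in $H^{2k}(X_\C^\an, \C)$, so $\alpha \in H^{2k}_\Gr(X, \Q(k))$. The key difficulty is precisely this final descent step: absolute rationality -- rather than rationality relative to a single embedding -- is what enforces the countability of $S$, and it is this countability, played against the uncountable supply of algebraically independent transcendentals in $\C$, that forces the $k_s$ into $\Qb$.
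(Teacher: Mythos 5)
Your proposal is correct. The formal skeleton is the same as the paper's: implications (1) through (4) are definitional or follow from Andr\'e's results, and the entire content is the descent statement underlying (4) $\Rightarrow$ (5), namely that an absolutely rational class in $\HdR^{2k}(X_K/K)$ is already defined over $\Qb$, after which rationality relative to a single embedding extending $\Qb\subset\C$ places the descended class in $H^{2k}_{\Gr}(X,\Q(k))$. Where you genuinely diverge is that the paper simply quotes this descent statement from \cite{DeligneMilneOgusShih}, Corollary 2.7, whereas you prove it from scratch: you expand $\alpha=\sum_s g_s\otimes k_s$ with the $g_s$ linearly independent over $\Qb$, use the resulting injective matrix to confine the tuples $(\sigma(k_s))_s$ to a countable subset of $\C^n$, and play this against the uncountably many embeddings $K\hookrightarrow\C$ over $\Qb$ obtained by moving a transcendence basis of $\Qb(k_1,\dots,k_n)$, forcing the transcendence degree to be zero and hence all $k_s$ to lie in $\Qb$. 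The argument is sound: the points it relies on (independence over $\Qb$ persists over $\C$; $K$ is countable of finite transcendence degree, so partial embeddings extend to $K\hookrightarrow\C$; restricting to embeddings that extend the fixed inclusion of $\Qb$ keeps the Betti rational structure constant) are all correctly in place. What your route buys is a self-contained and elementary proof of the one non-formal step, at the cost of reproving a standard lemma; it is in substance the classical spreading/cardinality argument behind the corollary of Deligne--Milne--Ogus--Shih that the paper invokes.
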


\begin{proof}
The only step that does not formally follow from the definitions is the fact that absolute rational classes lie in $H^{2k}_{\Gr}(X, \Q(k))$. By the observation above, this reduces to proving that if $\alpha$ is an absolute rational class in $\HdR^{2k}(X_K/K)$, then $\alpha$ is defined over $\Qb$, that is, $\alpha$ belongs to the subspace $\HdR^{2k}(X/\Qb)$. This is proven in \cite{DeligneMilneOgusShih}, Corollary 2.7.
\end{proof}

The question whether (4) implies (3) is asked by Deligne in \cite{DeligneMilneOgusShih}, Question 2.4. 

\bigskip

Let $X$ and $Y$ be two smooth projective algebraic varieties over $\Qb$.  As explained in \cite{DeligneMilneOgusShih} (see also \cite{CharlesSchnell11}, 11.2.6), the definition of an absolute Hodge class given above can be extended to that of an \emph{absolute Hodge} morphism
$$f:  H^{2k}(X^\an_\C, \Q(k)) \lra H^{2l}(Y^\an_\C, \Q(l)).$$ 
Note that, as a consequence of Proposition \ref{propAlgHodgeGr}, such a morphism maps $\HdR^{2k}(X/\Qb)$ to $\HdR^{2l}(Y/\Qb)$ and $H^{2k}_{\Gr}(X, \Q(k))$ to $H^{2l}_{\Gr}(Y, \Q(l))$.

The following proposition somehow asserts the motivic nature of the Grothendieck period conjecture. Its proof relies heavily on the existence of polarizations.

\begin{proposition}\label{motivic}
Let $X$ and $Y$ be two smooth projective varieties over $\Qb,$ and let $k$ and $l$ be two non-negative integers.
Let
$$f: H^{2k}(X^\an_\C, \Q(k)) \lra H^{2l}(Y^\an_\C, \Q(l))$$ 
be an absolute Hodge morphism.

\begin{enumerate}
\item 
We have 
$$f(H^{2k}_{\Gr}(X, \Q(k)))=H^{2l}_{\Gr}(Y, \Q(l))\cap \mathrm{Im}(f).$$
\item Assume that $X$ satisfies the Hodge conjecture in codimension $k$ and that $Y$ satisfies the Grothen\-dieck period conjecture in codimension $l$. Then we have 
$$f(H^{2k}_{\Gr}(X, \Q(k)))=H^{2l}_{\Gr}(Y, \Q(l))\cap \mathrm{Im}(f)=f(cl^X_\Gr(Z^k(X)_{\Q})).$$
In particular, if $f$ is injective, $X$ satisfies the Grothendieck period conjecture in codimension $k$.
\item Assume that $f$ is algebraic, \emph{namely, that $f$ is induced by an algebraic correspondence between $X$ and $Y$,} and that $X$ satisfies the Grothendieck period conjecture in codimension $k$. Then 
$$H^{2l}_{\Gr}(Y, \Q(l))\cap \mathrm{Im}(f)=cl^Y_\Gr(Z^k(Y)_{\Q})\cap \mathrm{Im}(f).$$
In particular, if $f$ is surjective, then $Y$ satisfies the Grothen\-dieck period conjecture in codimension $l$.
\end{enumerate}
\end{proposition}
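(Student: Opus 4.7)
All three parts will be reduced to the existence of an \emph{absolute Hodge right inverse} $g : \mathrm{Im}(f) \to H^{2k}(X^\an_\C, \Q(k))$ of $f$, namely a morphism of absolute Hodge structures satisfying $f \circ g = \mathrm{id}_{\mathrm{Im}(f)}$. To build $g$, the plan is to fix polarizations on $H^{2k}(X^\an_\C, \Q(k))$ and $H^{2l}(Y^\an_\C, \Q(l))$ coming from hyperplane classes on $X$ and $Y$; these are algebraic, hence absolute Hodge. The kernel $K := \ker f$ is an absolute Hodge sub-structure of $H^{2k}(X^\an_\C, \Q(k))$, so by semi-simplicity of polarizable absolute Hodge structures it admits an absolute Hodge complement $K'$. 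The restriction $f|_{K'} : K' \lrasim \mathrm{Im}(f)$ is then an isomorphism of absolute Hodge structures, and $g$ will be its inverse composed with the inclusion $K' \hookrightarrow H^{2k}(X^\an_\C, \Q(k))$.

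Part (1) follows at once: being absolute Hodge, $g$ preserves both the de Rham $\Qb$-structure and the Betti rational structure, so it carries $H^{2l}_\Gr(Y, \Q(l)) \cap \mathrm{Im}(f)$ into $H^{2k}_\Gr(X, \Q(k))$; applying $f$ yields $H^{2l}_\Gr(Y, \Q(l)) \cap \mathrm{Im}(f) \subseteq f(H^{2k}_\Gr(X, \Q(k)))$, the reverse inclusion being immediate from the fact, noted just before the statement, that $f$ itself maps $H^{2k}_\Gr$ to $H^{2l}_\Gr$.

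For part (2), take $\beta \in H^{2l}_\Gr(Y, \Q(l)) \cap \mathrm{Im}(f)$. By $GPC^l(Y)$ the class $\beta$ is algebraic, hence Hodge; since $g$ is a morphism of Hodge structures, $g(\beta)$ is a Hodge class on $X$, so the Hodge conjecture in codimension $k$ on $X$ produces $W \in Z^k(X)_\Q$ with $g(\beta) = cl^X_\Gr(W)$, whence $\beta = f(g(\beta)) = f(cl^X_\Gr(W))$. Combined with part (1) this yields the triple equality, and the injectivity corollary drops out: any $\alpha \in H^{2k}_\Gr(X, \Q(k))$ satisfies $f(\alpha) = f(cl^X_\Gr(W))$ for some $W$, hence $\alpha = cl^X_\Gr(W)$. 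For part (3), the assumption that $f$ is induced by an algebraic correspondence on $X \times Y$ implies via the projection formula that $f$ carries classes of algebraic cycles on $X$ to classes of algebraic cycles on $Y$. Given $\beta \in H^{2l}_\Gr(Y, \Q(l)) \cap \mathrm{Im}(f)$, part (1) produces $\alpha \in H^{2k}_\Gr(X, \Q(k))$ with $f(\alpha) = \beta$, and $GPC^k(X)$ makes $\alpha$ algebraic, hence $\beta = f(\alpha)$ algebraic; surjectivity of $f$ then forces $GPC^l(Y)$.

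The hard part is justifying the semi-simplicity of polarizable absolute Hodge structures underlying the construction of $g$; this follows from Deligne's treatment in \cite{DeligneMilneOgusShih} together with the Hodge--Riemann bilinear relations, the polarization being itself absolute Hodge because the Lefschetz class is algebraic. Alternatively, one can work throughout in Andr\'e's category of motivated cycles \cite{Andre96b}, where semi-simplicity of the Tannakian category is built in, $g$ is then a motivated morphism, and the rest of the argument goes through unchanged.
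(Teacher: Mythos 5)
Your proposal is correct and takes essentially the same route as the paper: all three parts are reduced to the existence of an absolute Hodge section $g$ of $f$ over $\mathrm{Im}(f)$, which the paper obtains by citing \cite{CharlesSchnell11}, Proposition 24 and Corollary 25 --- precisely the polarization/semisimplicity construction you sketch. The remaining deductions in (1)--(3) coincide with the paper's up to harmless relabeling (in part (2) the paper works with $g(f(\alpha))$ for $\alpha\in H^{2k}_{\Gr}(X,\Q(k))$ instead of $g(\beta)$).
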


\begin{proof}
\begin{enumerate}
\item This is a semisimplicity result that relies in an essential way on polarizations. By \cite{CharlesSchnell11}, Proposition 24 and Corollary 25, there exists an absolute Hodge morphism
$$g : H^{2l}(Y^\an_\C, \Q(l))\lra H^{2k}(X^\an_\C, \Q(k))$$
such that the restriction of $g$ to the image of $f$ is a section of $f$. Now if $\beta$ is an element of $H^{2l}_{\Gr}(Y, \Q(l))\cap \mathrm{Im}(f)$, $\alpha:=g(\beta)$ belongs to $H^{2k}_{\Gr}(X, \Q(k))$ and $f(\alpha)=\beta$.

\item Let $g$ be as above, and let $\alpha$ be an element of $H^{2k}_{\Gr}(X, \Q(k))$. Then $f(\alpha)$ lies in $H^{2l}_{\Gr}(Y, \Q(l))\cap \mathrm{Im}(f)$. Since $Y$ satisfies the Grothendieck period conjecture in codimension $l$, $f(\alpha)$ is the cohomology class of an algebraic cycle on $Y$. In particular, $f(\alpha)$ is a Hodge class. 

Since $g$ is absolute Hodge, $\alpha'=g(f(\alpha))$ is a Hodge class as well, hence the class of an algebraic cycle on $X$ since $X$ satisfies the Hodge conjecture in codimension $k$. By the definition of $g$, $\alpha$ and $\alpha'$ have the same image by $f$.

\item Let $g$ be as above, and let $\beta$ be an element of $H^{2l}_{\Gr}(Y, \Q(l))\cap \mathrm{Im}(f)$. Then $\alpha=g(\beta)$ belongs to $H^{2k}_{\Gr}(X, \Q(k))$. Since $X$ satisfies the Grothendieck period conjecture in codimension $k$, $\alpha$ is the cohomology class of an algebraic cycle. Since $f$ is algebraic, $\beta=f(\alpha)$ is the cohomology class of an algebraic cycle as well.
\end{enumerate}
\end{proof}

\begin{corollary}\label{dominant}
Let  $X$ and $Y$ be two smooth projective varieties over $\Qb.$

If there exists a dominant rational map $f: X \dasharrow Y,$ then 
\begin{equation}\label{XY}
GPC^1(X) \Longrightarrow GPC^1(Y).
\end{equation}
\end{corollary}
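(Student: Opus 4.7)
The plan is to reduce to the case where the rational map is a morphism by using the birational invariance of $GPC^1$, and then to apply Proposition \ref{motivic}(3). By resolution of indeterminacy (or by blowing up the graph of $f$ and desingularizing), there exist a smooth projective variety $\tilde X$ over $\Qb$, a birational morphism $\pi \colon \tilde X \lra X$, and a morphism $\tilde f \colon \tilde X \lra Y$ such that $\tilde f = f \circ \pi$ on the common domain of definition. Since $f$ is dominant, $\tilde f$ is dominant, hence surjective. The map $\pi$ is an isomorphism over some dense open subscheme $U$ common to $X$ and $\tilde X$, and applying Corollary \ref{corUX} twice gives
$$GPC^1(X) \Longleftrightarrow GPC^1(U) \Longleftrightarrow GPC^1(\tilde X),$$
so it suffices to show $GPC^1(\tilde X) \Rightarrow GPC^1(Y)$.

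Set $d := \dim \tilde X - \dim Y \geq 0$, choose a projective embedding of $\tilde X$, and let $h \in H^2_{\dRB}(\tilde X, \Q(1))$ denote the corresponding hyperplane class. I would then consider the $\Q$-linear map
$$\phi \colon H^2_{\dRB}(\tilde X, \Q(1)) \lra H^2_{\dRB}(Y, \Q(1)), \qquad \alpha \longmapsto \tilde f_\ast(\alpha \cdot h^d).$$
This is an algebraic morphism in the sense of Proposition \ref{motivic}(3): it is the composition of cup-product with the algebraic class $h^d$ and the Gysin pushforward along the morphism $\tilde f$, and consequently is induced by an algebraic correspondence between $\tilde X$ and $Y$ (namely the image in $\tilde X \times Y$ of a generic complete intersection of $d$ hyperplane sections of $\tilde X$). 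A general complete intersection of $d$ hyperplane sections of $\tilde X$ maps onto $Y$ with some positive degree $e$, so $\tilde f_\ast(h^d) = e \cdot [Y]$ in $H^{2d}_{\dRB}(Y, \Q(d))$. The projection formula applied to any $\beta \in H^2_{\dRB}(Y, \Q(1))$ then yields
$$\phi(\tilde f^\ast \beta) \;=\; \tilde f_\ast\bigl(\tilde f^\ast(\beta) \cdot h^d\bigr) \;=\; \beta \cdot \tilde f_\ast(h^d) \;=\; e\beta,$$
which shows that $\phi$ is surjective.

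Assuming $GPC^1(\tilde X)$, Proposition \ref{motivic}(3) applied to $\phi$ then gives $GPC^1(Y)$, completing the proof. The only substantive step is the surjectivity of $\phi$, which is settled by the projection formula computation above; the rest is standard machinery (resolution of indeterminacy plus Corollary \ref{corUX} for the birational reduction, and compatibility of cup product and Gysin pushforward with both de Rham and Betti realizations to see that $\phi$ is algebraic). One could equivalently avoid any appeal to Proposition \ref{motivic}(3) by arguing directly: pulling a class $\beta \in H^2_\Gr(Y, \Q(1))$ back to $\tilde f^\ast \beta \in H^2_\Gr(\tilde X, \Q(1))$, writing it as $cl^{\tilde X}_\Gr(D)$ using $GPC^1(\tilde X)$, and pushing forward $D \cdot H^d$ to recover $e\beta$ as the cycle class of an algebraic divisor on $Y$.
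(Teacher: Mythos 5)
Your proof is correct, but it routes the second half of the argument differently from the paper. The reduction to a morphism is the same in both (resolve the graph/indeterminacy and invoke Corollary \ref{corUX} for birational invariance). After that, the paper works with the pullback $f^*\colon H^2(Y^\an_\C,\Q(1))\to H^2(X^\an_\C,\Q(1))$, proves it injective by exactly the projection-formula trick you use (pushing forward against the class of a subvariety dominating $Y$), and then applies Proposition \ref{motivic}(2), which requires knowing the Hodge conjecture in codimension $1$ for $Y$, i.e.\ the Lefschetz $(1,1)$ theorem. You instead build the \emph{surjective algebraic} correspondence $\alpha\mapsto \tilde f_*(\alpha\cdot h^d)$ and apply Proposition \ref{motivic}(3), which needs no Hodge-conjecture input; and your closing remark sharpens this further, since the direct argument ($\tilde f^*\beta$ is a Grothendieck class, hence $cl^{\tilde X}_\Gr(D)$ by $GPC^1(\tilde X)$, and $e\beta = cl^Y_\Gr(\tilde f_*(D\cdot H^d))$ by compatibility of cycle classes with intersection and proper pushforward in both realizations) bypasses Proposition \ref{motivic} altogether and hence the polarization/semisimplicity machinery behind it, at the modest cost of choosing the hyperplane sections generically so the intersection with $D$ is proper. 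So the two proofs share the same computational core but package it dually: injectivity of $f^*$ plus \ref{motivic}(2) plus Lefschetz $(1,1)$ versus surjectivity of an algebraic pushforward correspondence plus \ref{motivic}(3) (or nothing at all). One small bookkeeping slip: $\tilde f_*(h^d)$ lies in $H^0_{\dRB}(Y,\Q(0))$, not $H^{2d}_{\dRB}(Y,\Q(d))$ --- it is $e$ times the fundamental class --- but this does not affect the projection-formula identity $\phi(\tilde f^*\beta)=e\beta$ or anything downstream.
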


\begin{proof}
Let $\Gamma$ be the graph of $f$ in $X\times Y$, and let $\pi : \Gamma'\ra \Gamma$ be a resolution of singularities of $\Gamma$. Since $\Gamma'$ is birational to $X$, Corollary \ref{corUX} shows that $GPC^1(X)$ is equivalent to $GPC^1(\Gamma')$. The second projection from $\Gamma'$ to $Y$ is dominant. As a consequence, up to replacing $X$ with $\Gamma'$, we can assume that $f$ is a \emph{morphism} from $X$ to $Y$.

Consider the map 
$$f^* : H^2(Y^\an_\C, \Q(1))\lra H^2(X^\an_\C, \Q(1)).$$
It is an absolute Hodge morphism, and is well-known to be injective. Indeed, if $V$ is a subvariety of $X$ such that the dimension of $V$ is equal to the dimension of $Y$ and the restriction of $f$ to $V$ is dominant, and if $[V]$ is the cohomology class of $V$, then the map 
$$H^2(Y^\an, \Q(1))\lra H^2(Y^\an, \Q(1)), \alpha\longmapsto f_*(f^*\alpha\cup[V])$$
is equal to the multiplication by the degree $[\Qb(V):\Qb(Y)]$ of $V$ over $Y$. 

By Proposition \ref{motivic}, (2),  and Lefschetz's theorem on $(1,1)$-classes, this proves that $GPC^1(X)$ implies $GPC^1(Y)$.
\end{proof}

\begin{corollary}\label{GPC1hyp}
Let $X$ be a smooth projective variety over $\Qb$ of dimension at least $3$, and let $Y$ be a smooth hyperplane section of $X$ defined over $\Qb$. Then 
$$GPC^1(Y)\implies GPC^1(X).$$
\end{corollary}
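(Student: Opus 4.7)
The plan is to apply Proposition \ref{motivic}, part (2), to the restriction morphism induced by the closed immersion $i : Y \hookrightarrow X$. Concretely, the pullback
$$i^\ast : H^2(X^\an_\C, \Q(1)) \lra H^2(Y^\an_\C, \Q(1))$$
is an algebraic correspondence in an obvious way (being induced by a morphism of smooth projective varieties), hence in particular is an absolute Hodge morphism in the sense recalled in Section 5.1. Thus it is a legitimate input to Proposition \ref{motivic}.

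I would then verify the three hypotheses of Proposition \ref{motivic}, (2), taken with $k=l=1$ and with the roles of $X$ and $Y$ there played by $X$ and $Y$ respectively. First, $X$ satisfies the Hodge conjecture in codimension $1$: this is Lefschetz's theorem on $(1,1)$-classes, valid for any smooth projective variety. Second, $Y$ satisfies $GPC^1$ by assumption. Third, the map $i^\ast$ is injective on $H^2$: since $\dim X \geq 3$, one has $\dim Y = \dim X - 1 \geq 2$, and the weak Lefschetz theorem asserts that $i^\ast : H^j(X^\an_\C, \Q) \to H^j(Y^\an_\C, \Q)$ is injective for $j \leq \dim Y$, which applies at $j=2$. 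Twisting by $\Q(1)$ is immaterial.

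With these three inputs, the final clause of Proposition \ref{motivic}, (2) (``in particular, if $f$ is injective, $X$ satisfies the Grothendieck period conjecture in codimension $k$'') gives $GPC^1(X)$ directly.

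There is no substantive obstacle here; the content of the statement lies entirely in Proposition \ref{motivic}, which isolates the semisimplicity input (existence of a section of $i^\ast$ coming from polarizations and absolute Hodge classes) so that a formal invocation of weak Lefschetz and the Lefschetz $(1,1)$ theorem suffices. The only point that deserves a brief justification is the assertion that $i^\ast$ is algebraic (so that one may apply part (2) rather than only part (1)); this is clear since $i^\ast = (\mathrm{pr}_{Y})_\ast \circ ([\Gamma_i]\cdot) \circ \mathrm{pr}_X^\ast$, where $[\Gamma_i]$ is the class of the graph of $i$ in $X \times Y$.
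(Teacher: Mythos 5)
Your proof is correct and coincides with the paper's: apply Proposition~\ref{motivic}(2) to $i^\ast$, using Lefschetz's $(1,1)$-theorem for the Hodge conjecture in codimension $1$ on $X$, the hypothesis $GPC^1(Y)$, and the weak Lefschetz theorem for injectivity of $i^\ast$ on $H^2$. One small imprecision in your closing aside: what distinguishes part~(2) from part~(1) of Proposition~\ref{motivic} is the hypotheses on $X$ and $Y$, not any extra requirement on $f$ (algebraicity of $f$ is used only in part~(3)); what you actually need is that $i^\ast$ be absolute Hodge, which you correctly establish via algebraicity.
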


\begin{proof} It is again a consequence of Proposition \ref{motivic}, (2),  and Lefschetz's theorem on $(1,1)$-classes, applied to the (algebraic, hence absolute Hodge) morphism
$$i^*:  H^{2}(X^\an_\C, \Q(k))\lra H^{2}(Y^\an_\C, \Q(k))$$
defined by the inclusion map $i : Y \hra X.$ Indeed, according to the weak Lefschetz theorem, $i^*$ is injective. 
\end{proof}

Observe that, as pointed out in Section 1.2, Corollary \ref{GPC1hyp} shows that the validity of $GPC^1(X)$ for arbitrary smooth projective varieties would follow from its validity for smooth projective surfaces. 

Observe also that, when the dimension of $X$ is greater than 3, Corollary \ref{GPC1hyp} is a straightforward consequence of the classical weak Lefschetz theorems for cohomology and Picard groups, which show that when this dimension condition holds, the injection $i: Y\hlra X$ induces isomorphisms
$$i^\ast : H^2_{\Gr} (X, \Q(1)) \lrasim H^2_{\Gr}(Y, \Q(1))$$
and $$i^\ast: \Pic (X) \lrasim \Pic (Y).$$
Accordingly, the actual content of Corollary \ref{GPC1hyp} concerns the case where $X$ is a threefold and $Y$ is a surface.

\subsection{Abelian motives} In this section, we use proposition \ref{motivic} together with the Kuga--Satake correspondence to extend the Grothendieck period conjecture from abelian varieties to some varieties whose motive is -- conjecturally -- abelian. 

Recall that a smooth projective variety $X$ over a subfield of $\C$ is said to be \emph{holomorphic symplectic} if its underlying complex variety is simply connected and if $H^0(X, \Omega^2_X)$ is generated by a global everywhere non-degenerate two-form.

Examples of holomorphic symplectic varieties include Hilbert schemes of points and their deformations, generalized Kummer surfaces and their deformations, as well as two classes of sporadic examples in dimension $6$ and $10$. We refer to \cite{Beauville83} for details.

\begin{theorem}\label{ouf}
\begin{enumerate}
\item Let $X$ be a smooth projective holomorphic symplectic variety over $\Qb$, and assume that the second Betti number of $X$ is at least $4$. Then $GPC^1(X)$ holds.
\item Let $X$ be a smooth cubic hypersurface in $\mathbb P^5_{\Qb}$. Then $GPC^2(X)$ holds.
\end{enumerate}
\end{theorem}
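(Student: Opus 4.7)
Let $X$ be a smooth projective holomorphic symplectic variety over $\Qb$ with $b_2(X)\ge 4$, and fix a polarization class $h\in H^2(X,\Q(1))$ defined over $\Qb$. The Lefschetz decomposition
$$H^2(X^\an_\C,\Q(1)) = \Q\cdot h \oplus H^2(X^\an_\C,\Q(1))_{\mathrm{prim}}$$
is absolute Hodge (being induced by the algebraic class $h$ together with hard Lefschetz), so it suffices to prove algebraicity of every Grothendieck class in the primitive summand. Since $b_2(X)\ge 4$, this summand is a weight $2$ Hodge structure of K3 type and rank $\ge 3$; Deligne's Kuga--Satake construction \cite{Deligne72}, extended to the hyperk\"ahler setting via Andr\'e's motivated formalism \cite{Andre96b}, attaches to $X$ an abelian variety $A$ defined over $\Qb$ together with an absolute Hodge injection
$$\iota_{\mathrm{prim}} : H^2(X^\an_\C,\Q(1))_{\mathrm{prim}} \hookrightarrow H^2((A\times A)^\an_\C,\Q(1)).$$
Extending $\iota_{\mathrm{prim}}$ by zero on $\Q\cdot h$ produces an absolute Hodge morphism $\iota : H^2(X^\an_\C,\Q(1)) \to H^2((A\times A)^\an_\C,\Q(1))$ with kernel $\Q\cdot h$. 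Combining $GPC^1(A\times A)$, granted by Corollaries \ref{GPCAb} and \ref{GPCproduct}, with the Lefschetz $(1,1)$-theorem for $X$, Proposition \ref{motivic}(2) applied to $\iota$ shows that any $\alpha\in H^2_{\Gr}(X,\Q(1))$ satisfies $\iota(\alpha) = \iota(\beta)$ for some algebraic class $\beta$ on $X$; hence $\alpha-\beta\in\Q\cdot h$ is algebraic, and so is $\alpha$.

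\textbf{Plan for part (2).} Let $X\subset\PP^5_\Qb$ be a smooth cubic hypersurface and $F := F(X)$ its variety of lines, a smooth projective hyperk\"ahler fourfold over $\Qb$ with $b_2(F) = 23$ by \cite{BeauvilleDonagi85}; part (1) immediately gives $GPC^1(F)$. Letting $I\subset F\times X$ denote the universal line with projections $p:I\to F$ and $q:I\to X$, the Beauville--Donagi correspondence
$$\phi := p_* q^* : H^4(X^\an_\C,\Q(2)) \longrightarrow H^2(F^\an_\C,\Q(1))$$
is algebraic (hence absolute Hodge), and \cite{BeauvilleDonagi85} shows that it induces an isomorphism of Hodge structures between the primitive summands. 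Consequently $\ker\phi$ is contained in the non-primitive line $\Q\cdot h^2$, where $h$ is the hyperplane class on $X$; this line consists of algebraic classes. Moreover the Hodge conjecture in codimension $2$ for $X$ follows from Lefschetz $(1,1)$ on $F$ together with the Beauville--Donagi isomorphism (see also \cite{Zucker77}). Proposition \ref{motivic}(2) applied to $\phi$ therefore produces, for every $\alpha\in H^4_{\Gr}(X,\Q(2))$, an algebraic class $\beta$ with $\phi(\alpha)=\phi(\beta)$; then $\alpha-\beta\in\ker\phi\subseteq\Q\cdot h^2$ is algebraic, so $\alpha$ is algebraic as well. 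This yields $GPC^2(X)$.

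\textbf{Main obstacle.} The one non-formal input is the absolute Hodge (equivalently, motivated) character of the Kuga--Satake correspondence in the hyperk\"ahler setting, together with the fact that the Kuga--Satake abelian variety descends to $\Qb$ when $X$ and its polarization do. The analogous statement for $K3$ surfaces is Deligne's \cite{Deligne72}; the extension to smooth projective hyperk\"ahler varieties with $b_2\ge 4$ proceeds through Andr\'e's motivated-cycle formalism \cite{Andre96b} applied to a family interpolating the given hyperk\"ahler with a $K3$ locus (or a self-product thereof) carrying its Kuga--Satake data. Once this input is secured, everything else is formal from the results of Section 3 on abelian varieties and products, and Proposition \ref{motivic} on absolute Hodge correspondences, with no further transcendence input required.
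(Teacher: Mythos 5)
Your argument is correct and rests on the same pillars as the paper's proof (Kuga--Satake for hyperk\"ahler varieties with $b_2\ge 4$, $GPC^1$ for abelian varieties and products, Proposition \ref{motivic}, and Beauville--Donagi), but it deviates in two ways worth noting. For part (1), the paper's Theorem \ref{abelian} already provides an absolute Hodge \emph{injection} of the full $H^2(X^\an_\C,\Q(1))$ into the $H^2$ of an abelian variety over $\Qb$, so Proposition \ref{motivic}(2) applies directly via its ``if $f$ is injective'' clause; your detour through the primitive decomposition, extending by zero on $\Q\cdot h$ and then absorbing $\ker\iota=\Q\cdot h$ by hand, is sound (the Lefschetz projector is algebraic, hence absolute Hodge) but not needed. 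For part (2) the difference is more substantive: you run the correspondence in the direction $H^4(X)\to H^2(F)$ and invoke Proposition \ref{motivic}(2), which forces you to feed in the Hodge conjecture in codimension $2$ for the cubic fourfold (Zucker's theorem, or a further argument comparing dimensions of Hodge-class spaces via the two algebraic correspondences --- your parenthetical ``follows from Lefschetz $(1,1)$ on $F$'' needs that extra step, since the composite of the two correspondences is not a priori a multiple of the identity). The paper instead runs the \emph{algebraic} isomorphism of Corollary \ref{cor:incidence} in the direction $H^2(F)\to H^4(X)$ and applies Proposition \ref{motivic}(3), for which surjectivity of an algebraic correspondence plus $GPC^1(F)$ suffices, with no Hodge-conjecture input for $X$ at all. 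So your route is valid but imports one extra nontrivial ingredient that the paper's choice of direction avoids; otherwise the two proofs coincide.
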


To control the geometry of the projective varieties considered in Theorem \ref{ouf}, we shall rely on the following two classical results. 

\begin{theorem}\label{abelian}
Let $X$ be a smooth projective holomorphic symplectic variety over $\Qb$, and assume that the second Betti number of $X$ is at least $4$. Then there exists an abelian variety $A$ over $\Qb$ and an absolute Hodge injective morphism
\begin{equation}\label{KSmap}
\mathrm{KS} : H^2(X^\an_\C, \Q(1))\lra H^2(A^\an_\C, \Q(1)).
\end{equation}
\end{theorem}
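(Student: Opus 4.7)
The plan is to derive the theorem from the classical Kuga--Satake construction, in the form proved by Deligne \cite{Deligne72} for $K3$ surfaces and extended by André \cite{Andre96} via motivated cycles to polarized holomorphic symplectic varieties over $\Qb$. First I would fix an ample line bundle $L$ on $X$, with class $[L]\in H^2(X^\an_\C, \Q(1))$, and use the Lefschetz operator together with hard Lefschetz to decompose orthogonally
$$H^2(X^\an_\C, \Q(1)) = \Q\cdot[L] \,\oplus\, P^2,$$
where $P^2$ denotes the primitive part, of dimension $b_2(X)-1 \geq 3$. The projectors onto the two summands are induced by algebraic correspondences on $X\times X$, hence are absolute Hodge and defined over $\Qb$. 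Equipped with the Beauville--Bogomolov form, $(P^2, q)$ is a polarized rational Hodge structure of $K3$ type, so the problem reduces to producing an absolute Hodge embedding of $P^2$ into the $H^2$ of some abelian variety over $\Qb$.

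Next I would apply the Kuga--Satake construction to $(P^2, q)$: the even Clifford algebra $C^+(P^2, q)$ carries a natural weight-one Hodge structure whose associated complex torus is polarizable, hence defines a complex abelian variety $A_{KS}$, together with a natural embedding of polarized Hodge structures $\kappa: P^2 \hookrightarrow \mathrm{End}(H^1(A_{KS}^\an, \Q))$. Fixing a polarization on $A_{KS}$ identifies $H^1(A_{KS})^\vee \simeq H^1(A_{KS})(1)$; combined with the Künneth decomposition this rewrites $\kappa$ as an embedding
$$P^2 \hookrightarrow H^1(A_{KS}^\an, \Q) \otimes H^1(A_{KS}^\an, \Q)(1) \hookrightarrow H^2((A_{KS}\times A_{KS})^\an_\C, \Q(1)).$$
I would then extend $\kappa$ to all of $H^2(X^\an_\C, \Q(1))$ by sending $[L]$ to any nonzero algebraic divisor class on $A_{KS}\times A_{KS}$ (for instance, pulled back from a polarization on one factor); this extra component is algebraic, hence absolute Hodge. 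Setting $A:=A_{KS}\times A_{KS}$ then produces the desired $\mathrm{KS}$.

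The crux of the proof is to show that $A_{KS}$ admits a model over $\Qb$ and that the Kuga--Satake correspondence $\kappa$ is absolute Hodge. For $K3$ surfaces, this is the content of \cite{Deligne72}: one realizes the moduli space of polarized $K3$ surfaces as a Shimura variety, interprets Kuga--Satake as a morphism of Shimura data, and uses a deformation argument in a suitable algebraic family together with the invariance of absolute Hodge classes under parallel transport to propagate algebraicity from a dense set of special fibers to the generic one; the descent of $A_{KS}$ to $\Qb$ follows from the fact that the period map is defined over the reflex field. For higher-dimensional polarized hyperkähler manifolds with $b_2 \geq 4$, the same scheme applies, with motivated cycles in the sense of André \cite{Andre96} playing the role of absolute Hodge classes; once $\kappa$ is shown to be motivated, it is \emph{a fortiori} absolute Hodge and defined over $\Qb$, while its injectivity is built into the Kuga--Satake formalism (the Clifford algebra faithfully represents $P^2$).
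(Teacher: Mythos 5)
Your proposal takes essentially the same route as the paper: the paper's justification of this theorem is exactly a reduction to the Kuga--Satake correspondence, citing Deligne \cite{Deligne72} for $K3$ surfaces and Andr\'e (\cite{Andre96b}, Lemme 7.1.3, and \cite{Andre96}, Corollary 1.5.3 and Proposition 6.2.1) for the fact that the correspondence for holomorphic symplectic varieties with $b_2\geq 4$ is motivated, hence absolute Hodge and defined over $\Qb$. The extra details you supply (primitive decomposition with algebraic projectors, the Clifford-algebra construction, the embedding into $H^2((A_{KS}\times A_{KS})^\an_\C,\Q(1))$, and the separate algebraic treatment of $[L]$) are consistent with, and fill in, that argument.
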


When $X$ is a $K3$ surface, this is in substance the main assertion concerning the Kuga--Satake correspondence in \cite{Deligne72}, which was written before the introduction of the notion of absolute Hodge classes. In the proof of \cite{Andre96b}, Lemme 7.1.3, Andr\'e shows that, in the case of $K3$ surfaces, the Kuga--Satake correspondence is a motivated cycle. The general result -- actually, the fact that the Kuga-Satake correspondence for general holomorphic symplectic is motivated -- is proved in \cite{Andre96}, Corollary 1.5.3 and Proposition 6.2.1; see also \cite{CharlesSchnell11}, 4.5.

Let us only recall that the Kuga--Satake correspondence, first introduced in \cite{KugaSatake67}, is defined analytically through an algebraic group argument at the level of the moduli spaces of holomorphic symplectic varieties and abelian varieties, which are both open subsets of Shimura varieties. It is not known whether it is induced by an algebraic cycle, although this is expected as an instance of the Hodge conjecture.

The second result is due to Beauville and Donagi in \cite{BeauvilleDonagi85}.

\begin{theorem}\label{BD}
Let $X$ be a smooth cubic hypersurface in $\mathbb P^5_{\Qb}$. Then there exists a smooth projective holomorphic symplectic fourfold $F$ over $\Qb$, and an isomorphism
\begin{equation}\label{incidence}
\phi : H^4(X^\an_\C, \Q(2))\lra H^2(F^\an_\C, \Q(1))
\end{equation}
that is induced by an algebraic correspondence between $X$ and $F$.
\end{theorem}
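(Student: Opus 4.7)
The plan is to take $F := F_1(X)$, the Fano scheme of lines on $X$, as the desired holomorphic symplectic fourfold, and to take the class of the incidence subvariety of $X \times F$ as the algebraic correspondence inducing $\phi$. The first step is to establish that $F$ is a smooth projective holomorphic symplectic fourfold, defined over $\Qb$. Since $X$ is defined by a cubic polynomial with coefficients in $\Qb$, the variety $F$ sits inside the Grassmannian $G(2,6)_\Qb$ as the zero locus of the induced section of $\mathrm{Sym}^3 \mathcal{U}^\vee$ (where $\mathcal{U}$ denotes the tautological rank-two bundle), hence is defined over $\Qb$. A standard tangent space computation at a line $\ell \subset X$ using the normal bundle $N_{\ell/X}$ shows that $F$ is smooth of dimension $4$; simple connectedness of $F^\an_\C$ is classical. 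The existence of a nowhere-degenerate holomorphic $2$-form on $F$ can be obtained as a residue of a meromorphic form built from the defining cubic and a section of $\wedge^2 \mathcal{U}^\vee$, or recovered \emph{a posteriori} from Beauville--Donagi's degeneration to a $K3^{[2]}$ and deformation invariance.

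Next, let $P \subset X \times F$ be the universal line, $P = \{(x,\ell) \in X \times F : x \in \ell\}$, with projections $p : P \to X$ and $q : P \to F$. Both are proper surjective morphisms, and $q$ is a $\mathbb{P}^1$-bundle over $F$ (the tautological one), hence of relative dimension $1$. I would then define
\[
\phi := q_* \circ p^* : H^4(X^\an_\C, \Q(2)) \lra H^2(F^\an_\C, \Q(1)),
\]
where $p^*$ is pull-back and $q_*$ is the Gysin map. The Gysin map shifts cohomological degree by $-2$ and the Tate twist by $-1$, so $\phi$ has the predicted bi-degrees, and by construction it is induced by the algebraic cycle $[P] \in Z^2(X \times F)$.

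The main obstacle, and the actual content of the theorem, is the verification that $\phi$ is an isomorphism of $\Q$-vector spaces. My plan has three steps. First, reduce to a statement over $\C$ for a single smooth cubic fourfold: the moduli space of smooth cubic fourfolds is irreducible, and for any smooth family $\mathcal{X} \to B$ of such cubics the corresponding families $\mathcal{F} \to B$ and $\mathcal{P} \to B$ are smooth and proper over the base, so Ehresmann's theorem and proper base change imply that $\phi$ fits into a morphism of local systems of locally constant rank. Second, specialize to a Pfaffian cubic fourfold $X_0$: Beauville--Donagi prove that $F(X_0)$ is isomorphic to $S^{[2]}$, the Hilbert scheme of length-$2$ subschemes of a degree-$14$ $K3$ surface $S$, and the decomposition $H^2(S^{[2]}, \Q) \simeq H^2(S, \Q) \oplus \Q\,\delta$ yields $\dim H^2(S^{[2]}, \Q) = 23 = 1 + 21 + 1 = \dim H^4(X_0, \Q)$. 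Third, check injectivity of $\phi$ for $X_0$: using the projective bundle formula for $q$, one computes that $\phi$ sends the hyperplane class of $X_0$ to a non-zero multiple of the natural polarization on $F(X_0)$ and respects the Hodge decomposition, and one concludes using irreducibility of the transcendental part of the Hodge structure on $H^4(X_0, \Q)_{\mathrm{prim}}$ that $\phi$ is injective on that part as well; bijectivity then follows from the dimension count. Finally, since $X$, $F$, and $P$ and the maps $p^*, q_*$ are all defined over $\Qb$ and commute with the base change to $\C$, the fact that $\phi_\C$ is an isomorphism implies that $\phi$ itself is one.
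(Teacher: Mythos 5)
Your construction matches the paper's sketch exactly: $F$ is the Fano variety of lines, $P$ is the incidence correspondence, and $\phi = q_* \circ p^*$. The paper, however, does not actually prove that $\phi$ is an isomorphism; it simply refers the reader to Beauville--Donagi and records the geometric set-up. So the substance of your proposal is the verification of the isomorphism, and there you have a genuine gap.

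The problem lies in the third step, where you verify injectivity at the Pfaffian cubic $X_0$. Because $X_0$ \emph{is} Pfaffian, its primitive cohomology $H^4(X_0,\Q)_{\mathrm{prim}}$ is \emph{not} an irreducible Hodge structure: the very feature that makes $F(X_0) \simeq S^{[2]}$ --- the extra algebraic class coming from the Pfaffian/quartic-scroll geometry --- produces a nontrivial sub-Hodge-structure of algebraic primitive classes, so the transcendental lattice has rank $21$, not $22$. Showing injectivity of $\phi$ on that transcendental sublattice plus non-vanishing on $\Q h^2$ leaves the algebraic primitive part uncontrolled, and the dimension count $23 = 23$ cannot finish the job from there. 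The fix is to separate the roles: use the Pfaffian specialization only to establish that $F$ is holomorphic symplectic and that both sides have $b = 23$ (a statement about local systems, so it propagates over the irreducible moduli space), but carry out the injectivity argument at a \emph{very general} cubic fourfold $X_1$, where $H^4(X_1,\Q)_{\mathrm{prim}}$ \emph{is} irreducible. At such an $X_1$, non-vanishing of $\phi$ on $H^{3,1}$ forces injectivity on all of $H^4_{\mathrm{prim}}$; and since $\phi(H^4_{\mathrm{prim}})$ then contains no nonzero Hodge classes while $\phi(h^2)$ is a nonzero Hodge class, the two images meet only in zero, giving injectivity on all of $H^4(X_1,\Q)$ and hence bijectivity by the rank count. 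Constancy of the rank of the morphism of local systems over the connected moduli space then yields the isomorphism for every smooth cubic, and your descent to $\Qb$ at the end is fine.
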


\begin{proof}
While we refer to \cite{BeauvilleDonagi85} for the details of its proof, we briefly indicate the basic geometric constructions behind this theorem.

Let $F$ be the variety of lines in $X$. Beauville and Donagi prove that $F$ is a smooth projective holomorphic symplectic variety of dimension $4$ with second Betti number equal to $23$. 

By the following construction, codimension $2$ cycles on $X$ are related to divisors on $F$. 

Let $Z$ be the incidence correspondence between $F$ and $X$. Points of $Z$ are pairs $(l, x)$ where $l$ is a line in $X$ and $x$ a point of $l$. The incidence correspondence $Z$ maps to both $F$ and $X$ in a tautological way. Since $F$ has dimension $4$, $Z$ has dimension $5$, and the correspondence induces a map
$$H^4(X^\an_\C, \Q(2))\lra H^2(F^\an_\C, \Q(1)).$$

This map is the required isomorphism.
\end{proof}

From the result of Beauville and Donagi, we get the following.

\begin{corollary}\label{cor:incidence}   
Let $X$ be a smooth cubic hypersurface in $\mathbb P^5_{\Qb}$. Then there exists a smooth projective holomorphic symplectic fourfold $F$ over $\Qb$, and an isomorphism
$$H^2(F^\an_\C, \Q(1))\lra H^4(X^\an_\C, \Q(2))$$
that is induced by an algebraic correspondence between $F$ and $X$.
\end{corollary}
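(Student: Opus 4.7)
The plan is to take $F$ to be the Fano variety of lines on $X$ provided by Theorem~\ref{BD}, and to exploit, in addition to the forward correspondence used there, the combination of the ``Poincar\'e-adjoint'' correspondence and hard Lefschetz on $F$. If $Z\subset F\times X$ denotes the incidence cycle (of codimension $3$), Theorem~\ref{BD} asserts that
$$\phi := Z^\ast : H^4(X^\an_\C, \Q(2)) \lra H^2(F^\an_\C, \Q(1)),\qquad \alpha \longmapsto \pi_{F\ast}(\pi_X^\ast\alpha \cup [Z]),$$
is an isomorphism. I would aim to show that $Z$ itself, combined with hyperplane classes on $F$, induces an algebraic isomorphism in the opposite direction with the correct degree shift.

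First, I would consider the pushforward attached to the same cycle, namely
$$Z_\ast : H^6(F^\an_\C, \Q(3)) \lra H^4(X^\an_\C, \Q(2)),\qquad \beta \longmapsto \pi_{X\ast}(\pi_F^\ast\beta \cup [Z]),$$
which is algebraic by construction. It is moreover an isomorphism: the projection formula gives the adjunction
$$\int_F \phi(\alpha) \cup \beta \;=\; \int_{F\times X} \pi_X^\ast\alpha \cup \pi_F^\ast\beta \cup [Z] \;=\; \int_X \alpha \cup Z_\ast(\beta),$$
which, combined with Poincar\'e duality on $X$ and $F$, identifies $Z_\ast$ with the Poincar\'e transpose of $\phi$; thus $Z_\ast$ inherits from $\phi$ the property of being an isomorphism.

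Second, by the hard Lefschetz theorem applied to $F$, cup product with $[H_F]^2$ for any hyperplane class $[H_F]\in H^2(F,\Q(1))$ defines an isomorphism
$$L_F^2 : H^2(F^\an_\C, \Q(1)) \lrasim H^6(F^\an_\C, \Q(3)),$$
which is algebraic, being induced by the correspondence $\Delta_F \cdot \pi_2^\ast([H_F]^2)$ in $F\times F$. The composite $Z_\ast \circ L_F^2$ is therefore an algebraic isomorphism from $H^2(F^\an_\C, \Q(1))$ to $H^4(X^\an_\C, \Q(2))$, and the projection formula shows that it is induced by the algebraic cycle $W := Z \cdot \pi_F^\ast([H_F]^2) \subset F\times X$, of codimension $5$.

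The only step requiring actual care is the identification of $Z_\ast$ as the Poincar\'e transpose of $\phi$: this is purely linear-algebraic, but demands some bookkeeping of degree shifts and Tate twists in the K\"unneth decomposition of $[Z] \in H^6(F\times X, \Q(3))$ and in the Poincar\'e pairings on $X$ and $F$. Once this identification is in place, the remaining assertions---algebraicity and that composition with the hard-Lefschetz isomorphism yields the desired iso---are essentially formal.
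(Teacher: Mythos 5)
Your proposal is correct and follows essentially the same route as the paper: the paper also takes the Poincar\'e transpose $\psi$ of $\phi$ (algebraic because $\phi$ is) and composes it with the hard Lefschetz isomorphism $\alpha\mapsto\alpha\cup[h]^2$ on $F$, which is algebraic. Your explicit identification of the transpose with $Z_\ast$ via the projection formula, and of the composite with the correspondence $Z\cdot\pi_F^\ast([H_F]^2)$, merely spells out details the paper leaves implicit.
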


\begin{proof}
Let $F$ be as in Theorem \ref{BD}. Let 
$$\psi : H^6(F^\an_\C, \Q(3))\lra H^4(X^\an_\C, \Q(2))$$
be the Poincar\'e dual to $\phi$. It is induced by an algebraic correspondence since $\phi$ is. Let $[h]$ be the cohomology class of a hyperplane section of $F$. By the hard Lefschetz theorem, the map 
$$H^2(F^\an_\C, \Q(1))\lra H^6(F^\an_\C, \Q(3)), \alpha\longmapsto \alpha\cup [h]^2$$
is an isomorphism. It is of course induced by an algebraic correspondence. As a consequence, the map 
$$H^2(F^\an_\C, \Q(1))\lra H^4(X^\an_\C, \Q(2)), \alpha\longmapsto \psi(\alpha\cup[h]^2)$$
is an isomorphism that is induced by an algebraic correspondence.
\end{proof}

\begin{proof}[Proof of Theorem \ref{ouf}]
Given the existence of the Kuga--Satake morphism (\ref{KSmap}) and of the Beauville-Donagi isomorphism (\ref{incidence}), the proposition follows from standard arguments.

\begin{enumerate}
\item We know that $GPC^1(A)$ holds and that $X$ satisfies the Hodge conjecture in codimension $1$ by the Lefschetz (1,1) theorem. Proposition \ref{motivic}, (2) applied to the Kuga--Satake morphism shows that $GPC^1(X)$ holds.
\item Let $F$ be as in Theorem \ref{BD}. Since we just proved that $GPC^1(F)$ holds, Corollary \ref{cor:incidence} and Proposition \ref{motivic}, (3) allows us to conclude.
\end{enumerate}
\end{proof}

\end{document}